\DeclareMathOperator*{\esssup}{ess\,sup}
\providecommand{\U}[1]{\protect \rule{.1in}{.1in}}
\newtheorem{theorem}{Theorem}[section]
\newtheorem{corollary}[theorem]{Corollary}
\newtheorem{definition}[theorem]{Definition}
\newtheorem{example}[theorem]{Example}
\newtheorem{lemma}[theorem]{Lemma}
\newtheorem{proposition}[theorem]{Proposition}
\newtheorem{remark}[theorem]{Remark}
\newenvironment{proof}[1][Proof]{\noindent \textbf{#1.} }{\  \rule{0.5em}{0.5em}}
\numberwithin{equation}{section}
\begin{document}
\title{On the exit times for SDEs driven by $G$-Brownian motion}
\author{Guomin Liu \thanks{Zhongtai Securities Institute for Financial Studies, Shandong University, Jinan,
 250100, PR China. gmliusdu@163.com.}
\and Shige Peng \thanks{School of Mathematics and Institute for Advanced Research, Shandong University, Jinan,
 250100, PR China.
peng@sdu.edu.cn.  Research
partially supported by the Tian Yuan Projection of the National Natural Sciences Foundation of China (No. 11526205 and No. 11626247) and the 111 Project (No. B12023).} \and Falei Wang\thanks{Zhongtai Securities Institute for Financial  Studies and Institute for Advanced Research, Shandong University,  Jinan, 250100, PR China.
flwang2011@gmail.com. Research partially supported by  the National Natural Science Foundation of China  (No. 11601282) and the Natural Science Foundation of  Shandong Province (No. ZR2016AQ10).}}
\date{}
\maketitle

\textbf{Abstract}. This paper is devoted to studying   the properties of the exit times for stochastic differential equations driven by $G$-Brownian motion ($G$-SDEs).
In particular, we prove that the exit times of $G$-SDEs has the quasi-continuity property.
As an application, we  give a probabilistic representation for a large class of fully nonlinear elliptic equations with Dirichlet boundary.

{\textbf{Key words}. Exit times, $G$-Brownian motion, quasi-continuity. }

\textbf{AMS subject classifications.} 60H10, 60H30.
\addcontentsline{toc}{section}{\hspace*{1.8em}Abstract}

\section{Introduction}
The nonlinear BSDE theory formulated by Pardoux and  Peng \cite{PP1} has many applications  in practice and theory, which range from   economics (see e.g. El Karoui, Peng and Quenez \cite{KPQ}) to PDEs (see e.g. Pardoux and Peng \cite{PP2}, Peng \cite{Peng1}).
Based on BSDE, Peng \cite{Peng2} introduced the nonlinear $g$-expectation theory as a nontrivial generalization of classical linear expectations.
Indeed, the  $g$-expectation  is described by a class of equivalent probability measures. In sprit of this property, Chen and Epstein \cite{CE} studied the stochastic differential recursive utility.

However, many economic and financial problems involve model uncertainty which is characterized
by a family of non-dominated probability measures. Motivated by these questions, Peng \cite{P3,P4,P7} introduced a nonlinear expectation, called $G$-expectation,
which can be regarded as the upper expectation of a specific family of non-dominated probability measures.
Under this framework, the corresponding nonlinear Brownian motion called $G$-Brownian motion is   established.
 Briefly speaking,  $G$-Brownian motion is a continuous process with independent and
stationary increments under  $G$-expectation. Moreover, the stochastic calculus with respect to (symmetric)  $G$-Brownian motion,
forward and backward stochastic differential equations driven by $G$-Brownian motion ($G$-SDEs and $G$-BSDEs in short)  are also obtained, see also Gao \cite{Gao},  Hu, Ji, Peng and Song \cite{HJPS1}.

As is well-known, according to Lusin's theorem, the random variables on the classical probability space are
quasi-continuous (see Section 2 for the definition). But this is no longer true for  the $G$-expectation framework since
 the elements in the probability family that represents $G$-expectation are mutual singular.
 So an important problem for the $G$-expectation theory is the quasi-continuity property  of random variables,
 especially for stopping times which play a major role in classical stochastic analysis but tends to have more discontinuity.

The  purpose of this paper is to study the properties of exit times for $G$-SDEs, among which the most important one is   that, under mild conditions, the exit times of $G$-SDEs have the quasi-continuity property,
so that it belongs to the proper nonlinear $G$-expectation space.
Here, the corresponding $G$-SDEs  are given by
\begin{equation}
dX^{x}_{t}=b(X_{t}^{x})ds+\sum_{i,j=1}^dh_{ij}(X_{t}^{x})d\langle B^i,B^j\rangle_t+\sum_{j=1}^{d}\sigma_j(X_{t}^{x})dB^j_t,\  X_{0}^{x}=x; \ \ \ t\geq 0.
\end{equation}
Different from the usual  case of symmetric $G$-Brownian motion that involves only volatility uncertainty,
in the above equation $B$ is a generalized $G$-Brownian motion, which   has both mean and volatility uncertainty.
Thus we need to study the corresponding stochastic calculus theory first, and  one can refer to \cite{GP,GPP,Nu} for  related discussions.
Next we consider the exit time of $G$-SDEs from an open set $Q$
$$
{\tau}_Q^x:=\inf\{t\geq 0:X^x_t(\omega)\in Q^c\}.
$$
Since we cannot expect the $G$-SDEs have  sufficient continuity  with respect to $\omega$, we introduce
an alternative approach of considering the image space of $G$-SDEs to study  the properties of ${\tau}_Q^x$.
We also utilize the weakly compact method from \cite{Song1}, where the quasi-continuity property of hitting times for symmetric $G$-martingales was considered, and the   strong Markov property of $G$-SDEs from \cite{HJL}.
These properties of exit times  may play an important role for  the applications of $G$-SDEs in many fields   involving a stopping rule.

The well-known  Feynman-Kac formula tells us that stochastic differential equations driven by linear Brownian motion (SDEs)  provide a probabilistic representation for a   class of linear PDEs (with Dirichlet boundary), see, e.g., \cite{Fre}.  With the help of $G$-BSDEs, in  \cite{P7,HJPS2} the authors obtain a
stochastic representation for fully nonlinear parabolic  PDEs in $\mathbb{R}^n$.
Inspired by these results, as an application of our results on the exit times,
 we state a probabilistic interpretation for  a large class of fully nonlinear  elliptic PDEs with Dirichlet boundary via $G$-SDEs.

We also note that   Lions and   Menaldi \cite{LM} (see also  Buckdahn and Nie \cite{BN}) gave a representation for a class of fully nonlinear elliptic equations with Dirichlet boundary via the stochastic control theory under the linear expectation framework. In their construction, every admissible control corresponds to   a trajectory of SDEs.
Compared with the aforementioned results,  the trajectories in our representation are universal defined for all   probability measures.
Moreover,  we prove that the induced probability measures of $G$-SDEs are weakly compact, and hence the supremum of the upper expectation representation can be   realized (Corollary \ref{supremum realization}).
This kind of properties can be applied to the study of first-order  differentiation of the viscosity solutions of fully nonlinear PDEs  (see \cite{HPS,Song2}), which is also our future work.

The paper is organized as follows.
 In Section 2, we present some preliminaries
for nonlinear expectation theory and related space of random variables.
 In Section 3, we give the stochastic integral and differential equations with respect to generalized $G$-Brownian motion.
Section 4 is devoted to the research of the properties of exit times for $G$-SDEs. In Section 5, we provide the probabilistic representation  for fully nonlinear elliptic equations with Dirichlet boundary.
\section{Preliminaries}
The main purpose of this section is to recall some preliminary
results about the upper expectation and the corresponding capacity theory. More
details can be found in  \cite{DHP}.

For  each Euclidian space, we  denote by $\langle\cdot,\cdot\rangle$  and  $|\cdot|$
 its scalar product and the associated norm, respectively.
Let $\Omega_d:=C([0,\infty);\mathbb{R}^d)$ and $B_t(\omega):=\omega(t)$ be the canonical
space and  the canonical mapping equipped with the norm
$$\rho_d(\omega^1,\omega^2):=\sum_{i=1}^\infty\frac{1}{2^i}[(\max_{t\in[0,i]}|\omega^1_t-\omega^2_t|\wedge 1)].$$
The  corresponding natural filtration of $B$ is given by $\mathcal{F}_t:=\sigma \{B_{s}:s\leq t\}$  for $t\geq 0$.

Let $\mathcal{P}$ be a given family of probability measures on $(\Omega_d, \mathcal{B}(\Omega_d))$.
Denote by $\mathcal{L}(\Omega_d,\mathcal{P})$ the space of all $\mathcal{B}(\Omega_d)$-measurable random variables $X$ such that
$E_P[X]$ exists for each $P\in \mathcal{P}$. Next we define the corresponding upper-expectation  by
\begin{equation}
\mathbb{\hat{E}}_{\mathcal{P}}[X]:=\sup_{P\in \mathcal{P}}E_P[X],\ \ \ \ \text{for}\ X \in \mathcal{L}(\Omega_d,\mathcal{P}).
\end{equation}
Then it is easy to check that the triple $(\Omega_d, \mathcal{L}(\Omega_d,\mathcal{P}), \mathbb{\hat{E}}_{\mathcal{P}})$ forms a sublinear expectation space (see Peng \cite{P7}).
In this setting, we can also introduce the notions of identically distribution and independence:
\begin{itemize}
\item[$\cdot$] two $n$-dimensional random vectors  $X=(X_{1},...,X_{n})$ and $Y=(Y_{1},...,Y_{n})$    are called identically distributed, denoted by $X\overset
{d}{=}Y$, if for each $\varphi\in C_{b.Lip}(\mathbb{R}^{n})$,
$
\hat{\mathbb{E}}_{\mathcal{P}}[\varphi(X)]=\hat{\mathbb{E}}_{\mathcal{P}}[\varphi(Y)],
$
\item[$\cdot$] an $m$-dimensional random vector $Y$ is said to be independent
of an $n$-dimensional random vector $X$ if for each
$\varphi\in C_{b.Lip}(\mathbb{R}^{n+m})$,
$
\hat{\mathbb{E}}_{\mathcal{P}}[\varphi(X,Y)]=\hat{\mathbb{E}}_{\mathcal{P}}[\hat{\mathbb{E}}_{\mathcal{P}}%
[\varphi(x,Y)]_{x=X}],
$
\end{itemize}
where $C_{b.Lip}(\mathbb{R}^l)$ is the space of all bounded Lipschitz function defined on $\mathbb{R}^l$, $l\geq1$.
\begin{example}{\upshape
Given two constants $0\leq \underline{\sigma}\leq \bar{\sigma}$.
Suppose   $W$ is a  1-dimensional standard Brownian motion defined  on  Wiener space $(\Omega^0,(\mathcal{F}^0_t)_{t\geq 0},P^0)$, set
\[
\mathcal{P} := \{P_{\theta} : P_{\theta}= P^0\circ X^{-1},\ X_t = \int^t_0 \theta_sdW_s,\  \theta\in\mathcal{A}_{[\underline{\sigma}, \bar{\sigma}]}\},\]
 where $\mathcal{A}_{[\underline{\sigma}, \bar{\sigma}]}$ is the collection of all adapted
 processes taking values in $[\underline{\sigma}, \bar{\sigma}]$. Then on the
 sublinear space $(\Omega_1, \mathcal{L}(\Omega_1,{\mathcal{P}}), \mathbb{\hat{E}}_{\mathcal{P}})$, the  canonical process $B$ is a symmetric
 $G$-Brownian motion ($\hat{\mathbb{E}}_{\mathcal{P}}[B_t]=-\hat{\mathbb{E}}_{\mathcal{P}}[-B_t]=0$) with $G(a)=\frac{1}{2}(\bar{\sigma}^2a^+-\underline{\sigma}^2a^-)$ for each $a\in\mathbb{R}$, see \cite{DHP}.}
\end{example}

Now based on the set of $\mathcal{P}$, we introduce the following capacity, called upper probability,
$$c_{\mathcal{P}}(A):=\sup_{P\in\mathcal{P}} P(A),\ \ A\in \mathcal{B}(\Omega_d).$$
It is obvious that,
\begin{equation}\label{myq1}
c_{\mathcal{P}}(A) =\sup \{ c_{\mathcal{P}}(K):\ K\makebox{ is compact in }\  \mathcal{B}(\Omega_d), \ K\subset A\}, \ \  \forall A\in \mathcal{B}(\Omega_d).
\end{equation}
Then we could establish the language of  ``$\mathcal{P}$-quasi-surely'':
\begin{itemize}
\item[$\cdot$] A set $A\in\mathcal{B}(\Omega_d)$ is called $\mathcal{P}$-polar if $c_{\mathcal{P}}(A)=0$ and  a property is said to  holds ``$\mathcal{P}$-quasi-surely'' ($\mathcal{P}$-q.s.) if it holds outside a polar set. As usual, we do not distinguish between two random variables $X$ and $Y$ if $X=Y$ $\mathcal{P}$-q.s.
\item[$\cdot$] A function $X:\Omega_d \rightarrow \mathbb{R}$ is called
$\mathcal{P}$-quasi-continuous ($\mathcal{P}$-q.c.) if for each $\varepsilon>0$, there exists a closed set
$F$ with $c_{\mathcal{P}}(F^{c})<\varepsilon$ such that $X|_{F}$ is continuous. We say that
$Y:\Omega_d \rightarrow \mathbb{R}$ has a $\mathcal{P}$-quasi-continuous version if
there exists a $\mathcal{P}$-quasi-continuous function $X:\Omega_d \rightarrow \mathbb{R}$ such that $Y=X$ $\mathcal{P}$-q.s.
\end{itemize}
We define the $L^p$-norm of random variables as $||X||_{p,\mathcal{P}}:=(\mathbb{\hat{E}}_{\mathcal{P}}[|X|^p])^{\frac{1}{p}}$ for $p\geq 1$ and set
\[
{L}^{p}(\Omega_d;\mathcal{P}):=\{X\in\mathcal{B}(\Omega_d): ||X||_{{p,\mathcal{P}}}<\infty \}.
\]
Then ${L}^{p}(\Omega_d,\mathcal{P})$ is a Banach space under the norm $||\cdot||_{{p,\mathcal{P}}}$.
Let $C_b(\Omega_d)$ (resp. $B_b(\Omega_d)$) be the space of all bounded, continuous functions (resp. bounded, $\mathcal{B}(\Omega_d)$-measurable functions) on $\Omega_d$. We denote the corresponding completion under norm $||\cdot||_{p,\mathcal{P}}$   by ${L}_{C}^p(\Omega_d,\mathcal{P})$ (${L}_{b}^p(\Omega_d,\mathcal{P})$, resp.).
The following result characterizes the space ${L}_{C}^p(\Omega_d,\mathcal{P}),{L}_{b}^p(\Omega_d,\mathcal{P})$ in  the measurable and integrable sense.
\begin{theorem}[\cite{DHP}]
\label{LG characteriazation theorem}For each $p\geq1$, we have
\[
	L_{b}^{p}(\Omega_d,\mathcal{P})=\{X\in\mathcal{B}(\Omega_d): \lim_{n\rightarrow \infty}\mathbb{\hat{E}}_{\mathcal{P}}[|X|^{p}I_{\{|X|>n\}}]=0\},
	\]
	\[
	L_{C}^{p}(\Omega_d,\mathcal{P})=\{X\in\mathcal{B}(\Omega_d):X\ \text{has a}\ \mathcal{P}\text{-q.c. version,
	}\lim_{n\rightarrow \infty}\mathbb{\hat{E}}_{\mathcal{P}}[|X|^{p}I_{\{|X|>n\}}]=0\}.
	\]
\end{theorem}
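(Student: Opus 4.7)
The plan is to prove each of the two set equalities by double inclusion, with a truncation/approximation argument on one side and a uniform integrability/quasi-continuity extraction on the other. Throughout I will rely on Chebyshev's inequality together with the inner regularity (\ref{myq1}) of the capacity $c_{\mathcal{P}}$.

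For the characterization of $L_b^p(\Omega_d,\mathcal{P})$, the ``$\supset$'' direction is almost immediate: given a Borel-measurable $X$ with $\hat{\mathbb{E}}_{\mathcal{P}}[|X|^p I_{\{|X|>n\}}]\to 0$, the truncation $X_n:=(X\wedge n)\vee(-n)$ lies in $B_b(\Omega_d)$ and satisfies $|X-X_n|^p\le |X|^p I_{\{|X|>n\}}$, so $\|X-X_n\|_{p,\mathcal{P}}\to 0$. Conversely, for $X\in L_b^p$ I pick $X_k\in B_b(\Omega_d)$ with $\|X-X_k\|_{p,\mathcal{P}}\to 0$; the inequality
\[
|X|^p I_{\{|X|>n\}}\le 2^{p-1}\bigl(|X-X_k|^p+\|X_k\|_\infty^p\,I_{\{|X|>n\}}\bigr),
\]
combined with a Markov-type estimate on $c_{\mathcal{P}}(\{|X|>n\})$, gives the required limit by first choosing $k$ large and then $n$ large.

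For $L_C^p(\Omega_d,\mathcal{P})$, I would first handle ``$\subset$'' by extracting a quasi-continuous version: taking a fast subsequence $X_{k_j}\in C_b(\Omega_d)$ with $\hat{\mathbb{E}}_{\mathcal{P}}[|X-X_{k_j}|^p]<4^{-j}$, Chebyshev gives $c_{\mathcal{P}}(\{|X-X_{k_j}|>2^{-j/p}\})\le 2^{-j}$; setting $A_m:=\bigcap_{j\ge m}\{|X-X_{k_j}|\le 2^{-j/p}\}$ yields $c_{\mathcal{P}}(A_m^c)\le 2^{1-m}$, and on $A_m$ the continuous functions $X_{k_j}$ converge uniformly to $X$, so $X|_{A_m}$ is continuous; by (\ref{myq1}) I may replace $A_m$ by a closed subset of comparable capacity to obtain a quasi-continuous version. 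The uniform integrability of $X$ then follows from the already-proved $L_b^p$ part since $L_C^p\subset L_b^p$. For ``$\supset$'', I may assume $X$ itself is quasi-continuous; truncating produces bounded quasi-continuous $X_n=(X\wedge n)\vee(-n)$ for which $\|X-X_n\|_{p,\mathcal{P}}\to 0$ by the $L_b^p$ part, and for each $n$ I choose a closed $F_n$ with $c_{\mathcal{P}}(F_n^c)$ small such that $X_n|_{F_n}$ is continuous, then invoke Tietze's extension theorem on the metric space $(\Omega_d,\rho_d)$ to obtain $Y_n\in C_b(\Omega_d)$ with $Y_n=X_n$ on $F_n$ and $\|Y_n\|_\infty\le n$. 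The estimate $\hat{\mathbb{E}}_{\mathcal{P}}[|X_n-Y_n|^p]\le (2n)^p c_{\mathcal{P}}(F_n^c)$ can be made arbitrarily small, placing $X$ in $L_C^p$.

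The main technical obstacle is the quasi-continuous extraction step in the $L_C^p$ argument: one must simultaneously get uniform convergence of a subsequence on sets of large capacity and then promote these to closed sets, which genuinely requires the inner regularity (\ref{myq1}) of $c_{\mathcal{P}}$. A secondary subtlety is controlling the $L^p$-mass of the continuous approximants on the exceptional set $F_n^c$, which is exactly why the bounded-uniformly-integrable reduction via truncation must be performed before invoking Tietze's extension.
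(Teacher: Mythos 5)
Your argument for the $L_b^p$ characterization (both inclusions) and for the ``$\supset$'' inclusion of the $L_C^p$ characterization (truncate first, then extend the restriction to a closed set by Tietze with the bound $\hat{\mathbb{E}}_{\mathcal{P}}[|X_n-Y_n|^p]\le (2n)^p c_{\mathcal{P}}(F_n^c)$) is correct and is essentially the standard route of \cite{DHP}, which the paper simply quotes without proof. The genuine gap is in the extraction of a quasi-continuous version in the ``$\subset$'' inclusion for $L_C^p$. Your sets $A_m=\bigcap_{j\ge m}\{|X-X_{k_j}|\le 2^{-j/p}\}$ are only Borel, since $X$ is merely measurable, and the step ``by (\ref{myq1}) I may replace $A_m$ by a closed subset of comparable capacity'' does not follow: (\ref{myq1}) produces a compact $K\subset A_m$ with $c_{\mathcal{P}}(K)$ close to $c_{\mathcal{P}}(A_m)$, but $c_{\mathcal{P}}$ is only subadditive, not additive, so this gives no control on $c_{\mathcal{P}}(A_m\setminus K)$ and hence none on $c_{\mathcal{P}}(K^c)$. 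What you would actually need is outer regularity of $c_{\mathcal{P}}$ on the Borel set $A_m^c$, which fails for a general non-dominated family: for $\Omega=[0,1]$, $\mathcal{P}=\{\delta_q:q\in\mathbb{Q}\cap[0,1]\}$ and $A=\mathbb{Q}\cap[0,1]$ one has $c_{\mathcal{P}}(A^c)=0$, yet every closed $F\subset A$ satisfies $c_{\mathcal{P}}(F^c)=1$. Note also that your route aims to show $X$ itself is quasi-continuous, which is stronger than the statement (only a q.c.\ version is asserted) and can fail for a particular Borel representative.

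The repair is standard and avoids any regularity of the capacity. Choose the subsequence so that $\hat{\mathbb{E}}_{\mathcal{P}}[|X_{k_{j+1}}-X_{k_j}|^p]\le 4^{-j}$ as well, and set $F_m:=\bigcap_{j\ge m}\{|X_{k_{j+1}}-X_{k_j}|\le 2^{-j/p}\}$; these sets are \emph{closed} because the $X_{k_j}$ are continuous, and Chebyshev plus countable subadditivity give $c_{\mathcal{P}}(F_m^c)\le 2^{1-m}$. On each $F_m$ the sequence $(X_{k_j})$ is uniformly Cauchy, so $\tilde X:=\lim_j X_{k_j}$ (defined where the limit exists, $0$ elsewhere) restricts to a continuous function on every $F_m$, i.e.\ $\tilde X$ is quasi-continuous; and the capacity Borel--Cantelli argument from $\sum_j c_{\mathcal{P}}(\{|X-X_{k_j}|>2^{-j/p}\})<\infty$ shows $X_{k_j}\to X$ q.s., whence $\tilde X=X$ q.s.\ and $X$ has a q.c.\ version. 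With this substitution, and keeping your observation that the integrability condition follows from $L_C^p\subset L_b^p$, the proof goes through.
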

Moreover, we have the following monotone convergence results, which are different from the linear case.
\begin{proposition}[\cite{DHP,Song1}]\label{downward convergence proposition} Suppose $X_n$, $n\geq 1$ and $X$ are $\mathcal{B}(\Omega_d)$-measurable.
\begin{description}
\item[(1)]
Assume  $X_n\uparrow X$ q.s. on $\Omega$ and $E_{P}[X_1^-]<\infty$ for all $P\in\mathcal{P}$. Then
$
	\mathbb{\hat{E}}_{\mathcal{P}}[X_n]\uparrow\mathbb{\hat{E}}_{\mathcal{P}}[X].
$
\item[(2)] Assume $\mathcal{P}$ is weakly compact.
\begin{itemize}
\item [(a)]
If $\{X_n\}_{n=1}^\infty$ in ${L}_{C}^{1}(\Omega_d,\mathcal{P})$ satisfies that $X_n\downarrow X$ $\mathcal{P}$-q.s.,  then
	$
	\mathbb{\hat{E}}_{\mathcal{P}}[X_n]\downarrow\mathbb{\hat{E}}_{\mathcal{P}}[X].
	$
\item [(b)] For each closed set $F\in \mathcal{B}(\Omega_d)$,
$
	c_{\mathcal{P}}(F) =\inf \{ c_{\mathcal{P}}(O):\ O\makebox{ open in}\  \mathcal{B}(\Omega_d),\  F\subset O\}.
$
\end{itemize}
\end{description}
\end{proposition}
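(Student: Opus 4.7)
The plan is to treat the three assertions separately, with the bulk of the work going into (2a), since (1) is essentially classical and (2b) is a short corollary of (2a).

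For part (1), I would just swap the two suprema. Fix any $P\in\mathcal{P}$; since $X_n\uparrow X$ $P$-a.s. (polar sets are $P$-null) and $E_P[X_1^-]<\infty$, the classical monotone convergence theorem gives $E_P[X_n]\uparrow E_P[X]$. Because $\sup$ commutes with monotone $\sup$, we obtain
\[
\sup_n\hat{\mathbb{E}}_{\mathcal{P}}[X_n]=\sup_n\sup_{P\in\mathcal{P}}E_P[X_n]=\sup_{P\in\mathcal{P}}\sup_n E_P[X_n]=\sup_{P\in\mathcal{P}}E_P[X]=\hat{\mathbb{E}}_{\mathcal{P}}[X],
\]
which, together with monotonicity $\hat{\mathbb{E}}_{\mathcal{P}}[X_n]\leq\hat{\mathbb{E}}_{\mathcal{P}}[X]$, yields the claim.

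The heart of the argument is part (2a); weak compactness of $\mathcal{P}$ is exactly what is missing in the linear-theory style approach, because Fatou's lemma only gives the wrong inequality for the downward sup. My plan is the standard two-stage reduction. First, I would prove the statement under the extra hypothesis that each $X_n$ is bounded and continuous on $\Omega_d$: setting $Y_n:=X_n-X_1\leq 0$ and working with $-Y_n\downarrow 0$, for every $\varepsilon>0$ the sets $K_n:=\{P\in\mathcal{P}:E_P[-Y_n]\geq\varepsilon\}$ are weakly closed (continuity and boundedness make $P\mapsto E_P[-Y_n]$ upper semicontinuous in the weak topology) and decreasing; if all were nonempty, weak compactness would give a common $P^\ast$ with $E_{P^\ast}[-Y_n]\geq\varepsilon$ for every $n$, contradicting the classical dominated convergence theorem under $P^\ast$. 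Hence $\hat{\mathbb{E}}_{\mathcal{P}}[-Y_n]\downarrow 0$, i.e.\ $\hat{\mathbb{E}}_{\mathcal{P}}[X_n]\downarrow\hat{\mathbb{E}}_{\mathcal{P}}[X_1]+\hat{\mathbb{E}}_{\mathcal{P}}[X-X_1]=\hat{\mathbb{E}}_{\mathcal{P}}[X]$. Second, I would remove the continuity/boundedness hypothesis by approximation inside $L_C^1(\Omega_d,\mathcal{P})$: by Theorem~\ref{LG characteriazation theorem}, each $X_n\in L_C^1$ can be $\|\cdot\|_{1,\mathcal{P}}$-approximated by bounded continuous functions (using $(X_n\wedge k)\vee(-k)$ and a quasi-continuous approximation combined with Tietze on the closed set $F$ given by quasi-continuity), and a standard $\varepsilon/3$-argument propagates the downward convergence from the approximants to $(X_n)$. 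The main obstacle is precisely this first stage: getting the weak upper semicontinuity of $P\mapsto E_P[-Y_n]$ and then invoking weak compactness to extract the offending common measure.

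For part (2b), given a closed $F\subset\Omega_d$, I would use the metric $\rho_d$ to define $f_n(\omega):=\bigl(1-n\,\rho_d(\omega,F)\bigr)^+$, a bounded continuous function with $1_F\leq f_n\leq 1_{O_n}$, where $O_n:=\{\omega:\rho_d(\omega,F)<1/n\}$ is open and contains $F$. Since $f_n\downarrow 1_F$ pointwise and every $f_n$ lies in $L_C^1(\Omega_d,\mathcal{P})$, part (2a) gives $\hat{\mathbb{E}}_{\mathcal{P}}[f_n]\downarrow\hat{\mathbb{E}}_{\mathcal{P}}[1_F]=c_{\mathcal{P}}(F)$. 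Sandwiching $c_{\mathcal{P}}(F)\leq\hat{\mathbb{E}}_{\mathcal{P}}[f_n]\leq c_{\mathcal{P}}(O_n)$ and taking the infimum over $n$ (and hence over all open neighborhoods of $F$, since $\{O_n\}$ is a cofinal system of open neighborhoods) completes the outer regularity statement.
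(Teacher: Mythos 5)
Part (1) of your proposal is fine, and since the paper only cites this proposition from \cite{DHP,Song1} without reproducing a proof, the real question is whether your argument for (2) holds up. For (2a) it does not, as written. With your definition $Y_n:=X_n-X_1$, the function $-Y_n=X_1-X_n$ is \emph{increasing} to $X_1-X$, so the claims ``$-Y_n\downarrow 0$'', the nestedness of the sets $K_n$, and the ``contradiction with dominated convergence under $P^\ast$'' all collapse; moreover the concluding identity $\hat{\mathbb{E}}_{\mathcal{P}}[X_n]\downarrow\hat{\mathbb{E}}_{\mathcal{P}}[X_1]+\hat{\mathbb{E}}_{\mathcal{P}}[X-X_1]=\hat{\mathbb{E}}_{\mathcal{P}}[X]$ is false, since $\hat{\mathbb{E}}_{\mathcal{P}}$ is only subadditive. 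If instead you meant $Y_n:=X_n-X\downarrow 0$, the key step still fails: $X=\inf_n X_n$ is only upper semicontinuous, so $X_n-X$ is lower semicontinuous, $P\mapsto E_P[X_n-X]$ is only weakly l.s.c., and its superlevel sets $K_n$ need not be weakly closed; besides, $\hat{\mathbb{E}}_{\mathcal{P}}[X_n-X]\to 0$ is strictly stronger than what is asserted ($X$ need not be quasi-continuous). The correct argument (this is the one in \cite{DHP}) never subtracts the limit: set $L:=\inf_n\hat{\mathbb{E}}_{\mathcal{P}}[X_n]$ and use the nested, nonempty, weakly closed sets $\{P\in\mathcal{P}:E_P[X_n]\geq L\}$ (equivalently, pick $P_n$ nearly optimal for $X_n$ and extract a weak limit $P^\ast$); for any $P^\ast$ in the intersection, classical monotone convergence under $P^\ast$ gives $\hat{\mathbb{E}}_{\mathcal{P}}[X]\geq E_{P^\ast}[X]=\lim_n E_{P^\ast}[X_n]\geq L$. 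Your second reduction stage is also too quick: an $L^1$-approximation of each $X_n$ by bounded continuous functions does not produce a \emph{decreasing} continuous sequence, which is what stage 1 needs; one must first pass to the running minima $\min_{k\leq n}\tilde X_k$ and control the error by $\sum_k\hat{\mathbb{E}}_{\mathcal{P}}[|X_k-\tilde X_k|]$.

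For (2b) the ingredients are right but the final inequality points the wrong way: from $1_F\leq f_n\leq 1_{O_n}$ you only get $c_{\mathcal{P}}(F)\leq\hat{\mathbb{E}}_{\mathcal{P}}[f_n]\leq c_{\mathcal{P}}(O_n)$, which yields the trivial bound $c_{\mathcal{P}}(F)\leq\inf_n c_{\mathcal{P}}(O_n)$, not outer regularity; since your $f_n$ equals $1$ only on $F$, no open neighborhood of $F$ has its indicator dominated by $f_n$. The fix is easy: either take $O^{n,\delta}:=\{f_n>1-\delta\}$, an open set containing $F$ with $c_{\mathcal{P}}(O^{n,\delta})\leq(1-\delta)^{-1}\hat{\mathbb{E}}_{\mathcal{P}}[f_n]$, or replace $f_n$ by a trapezoidal function equal to $1$ on $\{\rho_d(\cdot,F)<\tfrac{1}{2n}\}$ and $0$ off $\{\rho_d(\cdot,F)<\tfrac{1}{n}\}$, so that $1_{O_{2n}}\leq f_n$ and (2a) gives $c_{\mathcal{P}}(O_{2n})\leq\hat{\mathbb{E}}_{\mathcal{P}}[f_n]\downarrow c_{\mathcal{P}}(F)$. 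Incidentally, the claim that $\{O_n\}$ is a cofinal system of open neighborhoods of $F$ is false for noncompact closed $F$, but it is not needed for the statement.
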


\begin{remark}\label{remark on tightness guarantee maximum and on closure}
	\upshape{
If ${{\mathcal{P}}}$ is weakly compact, then the maximum  exists for elements of $L^1_C(\Omega_d,{{\mathcal{P}}})$, i.e.,
\begin{equation*}
\mathbb{\hat{E}}_{\mathcal{P}}[X]=\max_{P\in \mathcal{P}}E_P[X],\ \ \ \ \text{for each}\ X\in L^1_C(\Omega_d,{{\mathcal{P}}}).
\end{equation*}
For a family $\mathcal{P}_0$, we denote by ${{\mathcal{{P}}}}$ its closure under weak convergence and it holds that
\begin{equation}\label{2134354544}
\mathbb{\hat{E}}_{\mathcal{P}_0}[X]=\mathbb{\hat{E}}_{{{\mathcal{{P}}}}}[X],\ \ \ \ \text{for each}\ X\in L^1_C(\Omega_d,{{\mathcal{{P}}}}).
\end{equation}
}
\end{remark}

\section{The SDEs driven by  generalized $G$-Brownian motion}
Let $\mathbb{S}(d)$ be the space of all $d\times d$ symmetric matrices.
Consider a fixed sublinear function $G(\cdot,\cdot):\mathbb{S}%
(d)\times\mathbb{R}^d\rightarrow \mathbb{R}$, which is  monotonic in the first variable. Then there exists a bounded and closed set $\Theta\subset \mathbb{R}^{d\times d}\times \mathbb{R}^d$ such that
\begin{equation}\label{generalized G definition}
G(A,p)=\sup_{(\gamma,\mu)\in \Theta}[\frac{1}{2}\langle A,\gamma\gamma^{T}\rangle+\langle p,\mu\rangle].
\end{equation}

In the sequel, we shall introduce an upper expectation on $(\Omega_d, \mathcal{B}(\Omega_d))$ such that the canonical process $B$ is the so-called generalized $G$-Brownian motion. Following the argument of \cite{DHP}, we consider
a linear standard $d$-dimensional Brownian motion $W$ on some
probability space $(\Omega^{0},\mathcal{F}^{0},(\mathcal{F}^{0}_t)_{t\geq 0}, P^{0})$ with
\[
\mathcal{F}^0_{t}:=\sigma \{W_{s},0\leq s\leq t\} \vee \mathcal{N}^{P^{0}},
\]
where $ \mathcal{N}^{P^{0}}$ is the space of all $P^{0}$-null subsets. Denote by $\mathcal{A}^{\Theta}$
the collection of all $\mathcal{F}^0_t$-adapted processes $(\gamma,\mu)$ taking values in $\Theta$
on $\lbrack0,\infty)$. For each fixed
$(\gamma,\mu) \in \mathcal{A}^{\Theta}$ and $0\leq t\leq T<\infty$, we define
\[
B_{T}^{t,\gamma,\mu}:=\int_{t}^{T}\gamma_{s}dW_{s}+\int_{t}^{T}\mu_{s}d{s}.
\]
Then we can obtain a family $\mathcal{P}_0$ of measures:
\begin{equation}\label{Generalized BM P0}
\mathcal{P}_0:=\{P_{\gamma,\mu}:P_{\gamma,\mu}=P^{0}\circ(B_{\cdot}^{0,\gamma,\mu}%
)^{-1},(\gamma,\mu)\in \mathcal{A}^{\Theta}\}.
\end{equation}
which is tight by the Kolmogorov's criterion (see \cite{KS}).
We define its closure under weak convergence as $\mathcal{P}$, which is weakly compact by Prokhorov's theorem. Then we could establish capacity theory corresponding to $\mathcal{P}$ through the results in Section 1.
In the following, for this $\mathcal{P}$,  we will abbreviate $\mathbb{\hat{E}}_{\mathcal{P}}$, $\mathcal{P}\text{-q.s.}$, $c_{\mathcal{P}}$, ${L}_{C}^p(\Omega_d,\mathcal{P})$ as
$\mathbb{\hat{E}},\text{q.s.},c,{L}_{C}^p(\Omega_d)$, etc, for symbol simplicity.

\begin{lemma}\label{integration transformation} For each $ X\in L_C^1(\Omega_d)$,
	\begin{equation}
	\mathbb{\hat{E}}[X]=\sup_{P \in \mathcal{P}_{0}}E_{P}[X]=\sup_{(\gamma,\mu) \in \mathcal{A}^{\Theta}}E_{P^0}[X(B^{0,\gamma,\mu}_{\cdot})].
	\end{equation}
\end{lemma}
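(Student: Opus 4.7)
The statement has two equalities, each reducing to a result already in place. My plan is to identify the first equality as an instance of \eqref{2134354544} in Remark \ref{remark on tightness guarantee maximum and on closure}, and the second as a change–of–variables under the pushforward.

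For the first equality, recall that $\mathcal{P}$ was constructed precisely as the weak closure of $\mathcal{P}_0$. Since $\mathcal{P}_0\subset\mathcal{P}$, we have $\sup_{P\in\mathcal{P}_0}E_P[X]\leq \hat{\mathbb{E}}_{\mathcal{P}}[X]$ immediately. The reverse inequality for $X\in L^1_C(\Omega_d)$ is exactly the content of \eqref{2134354544}; one does not need to reprove it. For completeness I would sketch how one would derive it if the reader wants to see it directly: for bounded continuous $X$ the portmanteau theorem gives $E_{P_n}[X]\to E_P[X]$ whenever $P_n\rightharpoonup P$, so the sup over $\mathcal{P}$ equals the sup over $\mathcal{P}_0$; a general $X\in L^1_C(\Omega_d)$ is the $\|\cdot\|_{1,\mathcal{P}}$-limit of a sequence $X_k\in C_b(\Omega_d)$ by Theorem \ref{LG characteriazation theorem}, and since $\hat{\mathbb{E}}_{\mathcal{P}_0}[\,\cdot\,]\leq \hat{\mathbb{E}}_{\mathcal{P}}[\,\cdot\,]$ one has $|\sup_{\mathcal{P}}E[X]-\sup_{\mathcal{P}}E[X_k]|\vee|\sup_{\mathcal{P}_0}E[X]-\sup_{\mathcal{P}_0}E[X_k]|\le \hat{\mathbb{E}}_{\mathcal{P}}[|X-X_k|]\to 0$, from which both suprema coincide.

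For the second equality, fix $(\gamma,\mu)\in\mathcal{A}^{\Theta}$. By definition $P_{\gamma,\mu}=P^{0}\circ (B^{0,\gamma,\mu}_{\cdot})^{-1}$, i.e.\ $P_{\gamma,\mu}$ is the pushforward of $P^{0}$ under the $\Omega_d$-valued random element $\omega^{0}\mapsto B^{0,\gamma,\mu}_{\cdot}(\omega^{0})$. The abstract change–of–variables formula then gives
\begin{equation*}
E_{P_{\gamma,\mu}}[X]=\int_{\Omega_d}X(\omega)\,dP_{\gamma,\mu}(\omega)=\int_{\Omega^{0}}X\bigl(B^{0,\gamma,\mu}_{\cdot}(\omega^{0})\bigr)\,dP^{0}(\omega^{0})=E_{P^{0}}\bigl[X(B^{0,\gamma,\mu}_{\cdot})\bigr].
\end{equation*}
Since $X\in L^1_C(\Omega_d)\subset L^1(P_{\gamma,\mu})$, the right-hand side is well defined. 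Taking the supremum over $(\gamma,\mu)\in\mathcal{A}^{\Theta}$ on both sides, and recalling the parametrization \eqref{Generalized BM P0} of $\mathcal{P}_0$, yields
\begin{equation*}
\sup_{P\in\mathcal{P}_0}E_P[X]=\sup_{(\gamma,\mu)\in\mathcal{A}^{\Theta}}E_{P^{0}}\bigl[X(B^{0,\gamma,\mu}_{\cdot})\bigr].
\end{equation*}

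There is no serious obstacle: the lemma is essentially a bookkeeping step linking the abstract weak closure $\mathcal{P}$ to the concrete parametric family $\mathcal{P}_0$, together with the pushforward formula. The only point that requires attention is the first equality on the class $L^1_C$ rather than on $C_b$, and that is handled by the $L^1$-density of $C_b(\Omega_d)$ in $L^1_C(\Omega_d)$ given by Theorem \ref{LG characteriazation theorem}, exactly as built into Remark \ref{remark on tightness guarantee maximum and on closure}.
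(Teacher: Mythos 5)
Your proof is correct and takes the same route as the paper, which simply cites Remark \ref{remark on tightness guarantee maximum and on closure} (equation \eqref{2134354544}) for the first equality and implicitly uses the pushforward identity for the second. Your additional sketch of the density/portmanteau argument and the change-of-variables step just fills in the details the paper leaves as "immediate."
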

\begin{proof}
The proof is immediate from Remark \ref{remark on tightness guarantee maximum and on closure}.
\end{proof}

\begin{remark}\label{L1G contain l,lip r.v.}\upshape{
	 From Theorem \ref{LG characteriazation theorem} and the above lemma, we could get that
$\varphi(B_{t_{1}},B_{t_{2}},\cdots,B_{t_{n}})\in L_C^1(\Omega_d)$,  where $\varphi\in C(\mathbb{R}^{n\times d}) $ is of polynomial growth.}
\end{remark}

The upper expectation $\mathbb{\hat{E}}$  corresponding to $\mathcal{P}$ is called $G$-expectation,
 under which the canonical process $B=(B^1,\cdots,B^d)$ is called  ($d$-dimensional) generalized $G$-Brownian motion, see \cite{P7}. Indeed,
\begin{proposition}\label{properties of generalized G-BM}
Under $\mathbb{\hat{E}}$, the canonical process $B$ is a generalized $G$-Brownian motion, i.e.,
\begin{itemize}
\item [(1)] $B_0=0\ q.s.$ and $\lim_{t\rightarrow 0}\mathbb{\hat{E}}[|B_t|^3]/t=0$;
\item [(2)] $B$ is stationary: $B_{t+s}-B_s\overset{d}{=}B_t$ and has independent increments: $B_{t+s}-B_t$ is independent from $(B_{t_1},\cdots,B_{t_n})$ for any $t_1<\cdots t_n\leq t$ and $s\geq 0$.
\end{itemize}
Moreover,   $\hat{\mathbb{E}}[\langle p,B_t\rangle]\leq G(0,p) t$, which implies $|\hat{\mathbb{E}}[\langle p,B_t\rangle]|\leq [G(0,p)\vee G(0,-p)] t$, for each $p\in \mathbb{R}^d$.
\end{proposition}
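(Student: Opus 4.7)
The plan is to use Lemma \ref{integration transformation} (just proved) to push every claim down to the Wiener space $(\Omega^0,\mathcal{F}^0,P^0)$, where $B^{0,\gamma,\mu}$ is a classical It\^o process with bounded coefficients, and then exploit the stability of $\mathcal{A}^\Theta$ under time-shifts of the driving Brownian motion.

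First I would handle item (1). The identity $B_0=0$ q.s.\ is immediate because $B_0^{0,\gamma,\mu}=0$ for every $(\gamma,\mu)\in\mathcal{A}^\Theta$. For the cube estimate, Lemma \ref{integration transformation} (together with Remark \ref{L1G contain l,lip r.v.} to place $|B_t|^3$ in $L_C^1$) gives
$$\mathbb{\hat{E}}[|B_t|^3]=\sup_{(\gamma,\mu)\in\mathcal{A}^\Theta}E_{P^0}\Big[\Big|\int_0^t\gamma_s\,dW_s+\int_0^t\mu_s\,ds\Big|^3\Big].$$
Since $\Theta$ is bounded, the Burkholder--Davis--Gundy inequality applied to the martingale part and a trivial bound on the drift yield $\mathbb{\hat{E}}[|B_t|^3]\le C(t^{3/2}+t^3)$; dividing by $t$ and letting $t\downarrow 0$ gives the claim.

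Next I would prove stationarity and independent increments in (2) simultaneously, via a time-shift/conditioning argument. Fix $s\ge 0$ and let $\tilde W_r:=W_{r+s}-W_s$, which is a $P^0$-Brownian motion independent of $\mathcal{F}_s^0$. For $(\gamma,\mu)\in\mathcal{A}^\Theta$, the adaptedness of $\gamma,\mu$ and the decomposition $\sigma(W_u:u\le r+s)=\mathcal{F}_s^0\vee\sigma(\tilde W_u:u\le r)$ let us write, conditionally on $\mathcal{F}_s^0$,
$$B_{t+s}-B_s=\int_0^t\tilde\gamma^\omega_r\,d\tilde W_r+\int_0^t\tilde\mu^\omega_r\,dr,$$
where, for each fixed $\omega\in\Omega^0$, $(\tilde\gamma^\omega,\tilde\mu^\omega)$ belongs to the analogous class for $\tilde W$. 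Taking $E_{P^0}[\varphi(\cdot)\mid\mathcal{F}_s^0]$ with $\varphi\in C_{b.Lip}$, the inner expectation is bounded by $\mathbb{\hat{E}}[\varphi(B_t)]$ uniformly in $\omega$, and after taking $E_{P^0}$ and the supremum over $\mathcal{A}^\Theta$ one gets $\mathbb{\hat{E}}[\varphi(B_{t+s}-B_s)]\le\mathbb{\hat{E}}[\varphi(B_t)]$. The reverse inequality is obtained by lifting any $(\tilde\gamma,\tilde\mu)\in\mathcal{A}^\Theta$ back to a shifted process $(\gamma,\mu)\in\mathcal{A}^\Theta$ via $\gamma_r:=\tilde\gamma_{r-s}(W_{\cdot+s}-W_s)$ (arbitrary in $\Theta$ on $[0,s)$), giving stationarity. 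For independence from $(B_{t_1},\dots,B_{t_n})$ with $t_n\le t$, the same conditioning on $\mathcal{F}_t^0$ applied to $\varphi(B_{t_1},\dots,B_{t_n},B_{t+s}-B_t)$ yields the tower identity in the definition of independence, because $(B_{t_1},\dots,B_{t_n})$ is $\mathcal{F}_t^0$-measurable and the inner piece equals $\mathbb{\hat{E}}[\varphi(x_1,\dots,x_n,B_s)]|_{x_i=B_{t_i}}$ by stationarity.

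Finally, the drift bound follows directly from Lemma \ref{integration transformation}: the It\^o integral has zero $P^0$-mean, so
$$\mathbb{\hat{E}}[\langle p,B_t\rangle]=\sup_{(\gamma,\mu)\in\mathcal{A}^\Theta}\int_0^t E_{P^0}[\langle p,\mu_s\rangle]\,ds\le t\sup_{(\gamma,\mu)\in\Theta}\langle p,\mu\rangle=tG(0,p),$$
and applying this to both $p$ and $-p$ yields the two-sided estimate. The main obstacle is the second step: carefully justifying that the time-shifted coefficients $(\gamma_{\cdot+s},\mu_{\cdot+s})$ define, after disintegration along $\mathcal{F}_s^0$, an admissible control for $\tilde W$ (so the conditional expectation really is controlled by $\mathbb{\hat{E}}[\varphi(B_t)]$), and the symmetric lifting in the reverse direction; the rest is routine once Lemma \ref{integration transformation} and the boundedness of $\Theta$ are in hand.
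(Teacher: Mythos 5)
Your proposal is correct and follows essentially the same route as the paper: both reduce everything to the Wiener-space control representation via Lemma \ref{integration transformation} (the cube bound in (1) and the drift estimate are the same direct computations), and both obtain stationarity and independence by conditioning on $\mathcal{F}^0_s$ (resp. $\mathcal{F}^0_t$) and shifting/pasting admissible controls. The ``main obstacle'' you flag --- justifying that the conditional expectation over continuation controls equals the unconditional supremum (an esssup-equals-sup, measurable-pasting argument) --- is precisely the step the paper delegates to Lemmas 43--44 of \cite{DHP}.
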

\begin{proof}
	The two assertions in  $(1)$ can be easily proved by 	  Lemma \ref{integration transformation} and Remark \ref{L1G contain l,lip r.v.}.
Moreover, by the definition of $G$ and the observation that
	$(\gamma_s,\mu_s)$ take values in $\Theta$, we deduce that for each $p\in \mathbb{R}^d$
	\begin{equation}\label{33334342342}
	\mathbb{\hat{E}}[\langle p,B_t\rangle]
	=\sup_{(\gamma,\mu) \in \mathcal{A}^{\Theta}}E_{P^0}[\langle p,\int_0^t\mu_sds\rangle]
	\leq G(0,p)t,
	\end{equation}
	from which we get   $|\hat{\mathbb{E}}[\langle p,B_t\rangle]|\leq [G(0,p)\vee G(0,-p)] t$.
	
	Now we are going to prove (2). By a similar analysis as in Lemma 43   of \cite{DHP},  we derive that for $\varphi \in C_{b.Lip}(\mathbb{R}^{d})$ and $t,s\geq 0$,
	\begin{align*}
	\sup_{(\gamma,\mu) \in \mathcal{A}^{\Theta}}E_{P^0}[\varphi(B^{0,\gamma,\mu}_{t})]=\sup_{(\gamma,\mu) \in \mathcal{A}^{\Theta}}E_{P^0}[\varphi(B^{s,\gamma,\mu}_{t+s})],
	\end{align*}
	which indicates that
	\begin{equation}\label{elementary BM distribution property}
	\hat{\mathbb{E}}[\varphi(B_{t})]=\hat{\mathbb{E}}[\varphi(B_{t+s}-B_s)].
	\end{equation}
	
Next for each $\varphi \in C_{b.Lip}(\mathbb{R}^{(n+1)\times d})$,  $(\gamma, \mu)\in  \mathcal{A}^{\Theta}$ and   $t,s\geq 0$, taking $\xi:=(B^{0,\gamma,\mu}_{t_1},\cdots,B^{0,\gamma,\mu}_{t_n})$ for $t_1<\cdots t_n\leq t$ and using the argument in Lemma 44  of \cite{DHP},
 we have
	\begin{align*}
	\esssup_{\overline{\gamma},\overline{\mu} \in \mathcal{A}^{\Theta}}%
	E_{P^0}[\varphi((B^{0,\gamma,\mu}_{t_1},\cdots,B^{0,\gamma,\mu}_{t_n}),B^{t,\overline{\gamma},\overline{\mu}}_{t+s})|\mathcal{F}^0_{t}]&
	=(\esssup_{\overline{\gamma},\overline{\mu} \in \mathcal{A}^{\Theta}}%
	 E_{P^0}[\varphi(x,B^{t,\overline{\gamma},\overline{\mu}}_{t+s})|\mathcal{F}^0_{t}])_{x=(B^{0,\gamma,\mu}_{t_1},\cdots,B^{0,\gamma,\mu}_{t_n})}\\&=(\sup_{\overline{\gamma},\overline{\mu} \in \mathcal{A}^{\Theta}}%
	E_{P^0}[\varphi(x,B^{t,\overline{\gamma},\overline{\mu}}_{t+s})])_{x=(B^{0,\gamma,\mu}_{t_1},\cdots,B^{0,\gamma,\mu}_{t_n})}.
	\end{align*}
	Taking firstly expectation $E_{P^0}$ and then   supremum over ${\gamma},{\mu} \in \mathcal{A}^{\Theta}$ to both sides yield that
	$$\sup_{{\gamma},{\mu} \in \mathcal{A}^{\Theta}}%
	E_{P^0}[\varphi((B^{0,\gamma,\mu}_{t_1},\cdots,B^{0,\gamma,\mu}_{t_n}),B^{t,{\gamma},{\mu}}_{t+s})]=\sup_{{\gamma},{\mu} \in \mathcal{A}^{\Theta}}E_{P^0}[(\sup_{\overline{\gamma},\overline{\mu} \in \mathcal{A}^{\Theta}}%
	E_{P^0}[\varphi(x,B^{t,\overline{\gamma},\overline{\mu}}_{t+s})])_{x=(B^{0,\gamma,\mu}_{t_1},\cdots,B^{0,\gamma,\mu}_{t_n})}],
	$$
which is,
	$$
	\hat{\mathbb{E}}[\varphi((B_{t_1},\cdots,B_{t_n}),B_{t+s}-B_t)]=	\hat{\mathbb{E}}[\hat{\mathbb{E}}[\varphi(x,B_{t+s}-B_t)]_{x=(B_{t_1},\cdots,B_{t_n})}].
	$$
The proof is complete.
\end{proof}

Note that when $\Theta$ has only a single point $(\gamma,\mu)$,
 $B$ is  the classical linear Brownian motion with $B_1\sim N(\mu,\gamma\gamma^T)$.
So the generalized $G$-Brownian motion can be regarded as a Brownian motion with mean and covariance uncertainty described by $\Theta$.
 Remark that when $G=G(A)$, the generalized $G$-Brownian motion reduces to symmetric $G$-Brownian motion which has only volatility uncertainty.

 \begin{remark}\upshape{In our article, for the purpose of running a more general PDEs, we use the generalized $G$-Brownian motion.
Most results by now about  symmetric $G$-Brownian motion still hold for generalized $G$-Brownian motion and the proofs are just similar,
so we will give them directly, except that we need to clarify some basic properties of  generalized $G$-Brownian motion and
the construction of related stochastic calculus, which are more sophisticated.
}
\end{remark}

Property $(2)$ in Theorem \ref{properties of generalized G-BM} allows us to define a time-consistent conditional $G$-expectation
 in the following way:
for
{{$X=\varphi(B_{t_{1}},B_{t_{2}
		}-B_{t_{1}},\cdots,B_{t_{n}}-B_{t_{n-1}})$, the conditional expectation at ${t_{j}}$ is
		defined
		by
		\begin{align}
		\hat{\mathbb{E}}_{t_j}[X] :=\psi(B_{t_{1}},B_{t_{2}
		}-B_{t_{1}},\cdots,B_{t_{j}}-B_{t_{j-1}}),\nonumber
		\end{align}
		where}}
\[
\psi(x_{1},\cdots,x_{j})=\hat{\mathbb{E}}[\varphi(x_{1},\cdots
,x_{j},B_{t_{j+1}}-B_{t_{j}},\cdots,B_{t_{n}}-B_{t_{n-1}})].
\]
The  conditional $G$-expectation $\hat{\mathbb{E}}_t[\cdot]$ can be extended continuously to $L_C^1(\Omega_d)$, and can preserve most properties of the linear expectation except the linearity,  see \cite{P7}.

In the remainder of this section, we shall study the stochastic calculus with respect to $B$.
We set $G_1(A):=\sup_{(\gamma,\mu)\in \Theta}\frac{1}{2}\langle A,\gamma\gamma^{T}\rangle=G(A,0)$ and $g(p):=\sup_{(\gamma,\mu)\in \Theta}\langle p,\mu\rangle=G(0,p).$ Then consider two sets:
 $$\Gamma:=\{\gamma\in \mathbb{R}^{d\times d}: \frac{1}{2}\langle A,\gamma\gamma^{T}\rangle\leq G_1(A), \ \text{for each}\ A\in \mathbb{S}%
({d}) \},\ \ \ \ \Sigma:=\{\mu\in \mathbb{R}^d:\langle p,\mu\rangle\leq g(p),\ \text{for each}\ p\in \mathbb{R}^d\}.$$
It is obvious that $\Theta\subset \Gamma\times \Sigma$.
For any $\gamma\in \Gamma$ and $\mu\in \Sigma$, we have
$
0\leq \gamma\gamma^T\leq \overline{\sigma}^2 I_{d\times d }$ and $ |\mu|\leq \beta
$ with $\overline{\sigma}^2:=\sup_{\gamma\in \Gamma}\lambda^{max}[\gamma\gamma^T] $ and $\beta:=\sup_{\mu\in \Sigma}|\mu|,$
where $\lambda^{max}[\gamma\gamma^T]$ is maximal eigenvalue of $\gamma\gamma^T$.

The proof for the following lemma can be found in \cite{STZ} (see also \cite{HP1}).
\begin{lemma}\label{STZ lemma for generalized G}
	For any $P\in\mathcal{P}$, we have
	\begin{equation}E_P[\xi|\mathcal{F}_{t}]\leq \mathbb{\hat{E}}_t[\xi],\ \ \ \ P\text{-a.s.},\ \text{for each}\ \xi\in L_C^1(\Omega_d).
	\end{equation}
\end{lemma}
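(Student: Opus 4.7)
The plan is a three-step reduction: (i) establish the inequality for cylindrical Lipschitz $\xi$ and $P \in \mathcal{P}_0$ by unfolding onto the Wiener space $(\Omega^0, \mathcal{F}^0, P^0)$; (ii) extend to $\xi \in L_C^1(\Omega_d)$ by density; (iii) extend the family from $\mathcal{P}_0$ to its weak closure $\mathcal{P}$.

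For step (i), I would fix $\xi = \varphi(B_{t_1}, B_{t_2} - B_{t_1}, \ldots, B_{t_n} - B_{t_{n-1}})$ with $\varphi \in C_{b.Lip}(\mathbb{R}^{n\times d})$ and, after possibly inserting $t$ as an extra partition point, assume $t = t_j$ for some $j$. For $P = P_{\gamma,\mu} \in \mathcal{P}_0$, the map $\omega^0 \mapsto B^{0,\gamma,\mu}_{\cdot}(\omega^0)$ pushes $P^0$ forward to $P$, so $E_P[\xi \mid \mathcal{F}_t]$ pulled back to $\Omega^0$ coincides with $E_{P^0}[\varphi(B^{0,\gamma,\mu}_{t_1},\ldots)\mid \mathcal{F}^0_t]$. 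The first $j$ increments are $\mathcal{F}^0_t$-measurable and are frozen by the conditioning at some $x = x(\omega^0)$, while the remaining increments depend on $(\gamma_s, \mu_s)_{s \ge t}$, which, time-shifted, is still an element of $\mathcal{A}^{\Theta}$. Applying Lemma \ref{integration transformation} to the post-$t$ evolution, the conditional expectation is bounded by
\[
\psi(x) := \hat{\mathbb{E}}[\varphi(x, B_{t_{j+1}} - B_t, \ldots, B_{t_n} - B_{t_{n-1}})],
\]
and by the very definition of the conditional $G$-expectation this is precisely $\hat{\mathbb{E}}_t[\xi]$ evaluated at the frozen past.

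For step (ii), I would use that cylindrical Lipschitz random variables are dense in $L_C^1(\Omega_d)$: pick $\xi_n$ cylindrical with $\hat{\mathbb{E}}[|\xi_n - \xi|] \to 0$. Because $E_P[|\cdot|] \leq \hat{\mathbb{E}}[|\cdot|]$ and $E_P[\cdot \mid \mathcal{F}_t]$ is an $L^1(P)$-contraction, $E_P[\xi_n \mid \mathcal{F}_t] \to E_P[\xi \mid \mathcal{F}_t]$ in $L^1(P)$. Similarly the conditional $G$-expectation is contractive on $L_C^1(\Omega_d)$, so $\hat{\mathbb{E}}_t[\xi_n] \to \hat{\mathbb{E}}_t[\xi]$ in $L_C^1(\Omega_d)$ and hence in $L^1(P)$. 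Extracting a $P$-a.s. convergent subsequence transports the inequality.

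For step (iii), given $P \in \mathcal{P} = \overline{\mathcal{P}_0}^{\mathrm{weak}}$, choose $P_n \in \mathcal{P}_0$ with $P_n \rightharpoonup P$. For cylindrical continuous bounded $\xi$ the dominating function $\hat{\mathbb{E}}_t[\xi]$ is itself a bounded continuous function of finitely many $B$-coordinates, so for any bounded continuous $\mathcal{F}_t$-measurable $\eta \geq 0$ both $E_{P_n}[\xi \eta]$ and $E_{P_n}[\hat{\mathbb{E}}_t[\xi]\eta]$ converge to the corresponding integrals under $P$. The cylindrical inequality from (i) thus passes to the limit as an integrated inequality, and a monotone class argument upgrades it to all bounded $\mathcal{F}_t$-measurable $\eta$, yielding the $P$-a.s. pointwise inequality for cylindrical $\xi$; step (ii) then delivers the full statement. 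The main obstacle is precisely this last step: conditional expectations under individual elements of $\mathcal{P}$ are not weakly continuous in $P$, so the passage must be performed at the level of integrated test functions and then inverted using a class argument, whereas (i) is bookkeeping once the canonical identification is set up and (ii) is routine continuity.
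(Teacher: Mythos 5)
Your argument is correct in substance, but note that the paper does not actually prove this lemma: it defers entirely to \cite{STZ} and \cite{HP1}, where the inequality is obtained as a byproduct of a stronger statement (the representation of $\hat{\mathbb{E}}_t[\xi]$ as an essential supremum of $E_{P'}[\xi|\mathcal{F}_t]$ over the measures $P'\in\mathcal{P}$ agreeing with $P$ on $\mathcal{F}_t$, proved via regular conditional probabilities). Your three-step reduction --- cylinder Lipschitz $\xi$ and $P\in\mathcal{P}_0$ by unfolding to the Wiener space, then $\|\cdot\|_{1}$-density, then passage to the weak closure through integrated test functions --- proves only the one-sided domination that is needed, is self-contained within the tools already used in Section 3 (it is essentially the conditioning argument of Lemma 44 of \cite{DHP} that the paper itself redeploys in Proposition \ref{properties of generalized G-BM}), and avoids the heavier aggregation machinery of the references; the price is that you do not get the esssup representation itself. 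Two points deserve tightening but are not gaps: in step (i) the pullback of $E_P[\xi|\mathcal{F}_t]$ to $\Omega^0$ is the conditional expectation with respect to $\sigma(B^{0,\gamma,\mu}_s:s\leq t)$, which may be strictly smaller than $\mathcal{F}^0_t$, so you should first bound $E_{P^0}[\,\cdot\,|\mathcal{F}^0_t]$ by $\psi$ of the frozen past and then use the tower property, which is legitimate precisely because that bound is measurable with respect to the image $\sigma$-algebra; and in step (iii) the ``monotone class'' step should be phrased as positivity of $E_P[(\hat{\mathbb{E}}_t[\xi]-E_P[\xi|\mathcal{F}_t])\eta]$ against a family of continuous cylindrical $\eta\geq 0$ with time indices $\leq t$ that is dense in $L^1(\Omega_d,\mathcal{F}_t,P)$, since the usual functional monotone class theorem is stated for equalities. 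You also implicitly use that bounded Lipschitz cylinder functions are dense in $L^1_C(\Omega_d)$, which holds here because $\mathcal{P}$ is tight (and is already presupposed by the paper when it extends $\hat{\mathbb{E}}_t$ to $L^1_C(\Omega_d)$), so it is worth one sentence of justification.
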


Next we give the semi-martingale decomposition for generalized  $G$-Brownian motion, which is crucial for our main results.
The proof will be given in Appendix.
\begin{theorem}\label{generalized G-BM decomposition}
	For any $P\in \mathcal{P}$, $B_t$ is a $d$-dimensional semimartingale with  decomposition $B_t=M^P_t+A^P_t$ such that, $P$-a.s., $A^P_t,\langle M\rangle^P_t$ is absolutely continuous with respect to $t$ and
	$$
\frac{dA^P_t}{dt}\in\Sigma \ \ \text{and}\ \ \frac{d\langle M\rangle^P_t}{dt}\in \Gamma\Gamma^T:=\{\gamma\gamma^T:\gamma\in\Gamma \},\ \ \ \ \ {a.e.\  t},\  \text{P-a.s.}
$$
	Here  we denote the quadratic variation of a martingale under $P$ by $\langle \cdot\rangle^P$.
\end{theorem}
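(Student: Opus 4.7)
The plan is to build the semimartingale decomposition directly from the super/sub-martingale characterisations provided by Lemma~\ref{STZ lemma for generalized G}, rather than from a passage-to-the-limit argument along $\mathcal{P}_{0}$. Fix $P\in\mathcal{P}$. For each $p\in\mathbb{R}^{d}$ I would first verify that $\langle p,B_{t}\rangle-g(p)t$ is a $P$-supermartingale and that $\langle p,B_{t}\rangle+g(-p)t$ is a $P$-submartingale. Indeed, stationarity and independence of the increments (Proposition~\ref{properties of generalized G-BM}) give
\[
\hat{\mathbb{E}}_{t}[\langle p,B_{t+s}-B_{t}\rangle]=\hat{\mathbb{E}}[\langle p,B_{s}\rangle]\leq g(p)\,s,
\]
and Lemma~\ref{STZ lemma for generalized G} then transfers this estimate to the $P$-conditional expectation. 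Applying Doob--Meyer componentwise to $\langle e_{i},B_{t}\rangle$ yields a continuous local $P$-martingale $M^{P}$ (continuity being inherited from that of $B$) and a predictable finite-variation process $A^{P}$ with $B=M^{P}+A^{P}$.

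Uniqueness of the Doob--Meyer decomposition, combined with the above two-sided inequality, forces
\[
-g(-p)\,t\leq\langle p,A_{t}^{P}\rangle\leq g(p)\,t\qquad P\text{-a.s., for every } p\in\mathbb{R}^{d}.
\]
Specialising to $p=\pm e_{i}$ makes each coordinate of $A^{P}$ Lipschitz in $t$; hence $A^{P}$ is absolutely continuous, and at every common Lebesgue differentiation point the bound $\langle p,dA_{t}^{P}/dt\rangle\leq g(p)$ holds for every $p$ in a countable dense subset of $\mathbb{R}^{d}$. Continuity of $g$ extends this inequality to all $p\in\mathbb{R}^{d}$, which is exactly the defining condition $dA_{t}^{P}/dt\in\Sigma$.

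For the quadratic-variation part I would repeat the argument at second order. Expanding $B_{s}=\int_{0}^{s}\gamma_{r}\,dW_{r}+\int_{0}^{s}\mu_{r}\,dr$ under $P^{0}$ and using It\^o's isometry together with the uniform bounds $\gamma\gamma^{T}\leq\overline{\sigma}^{2}I$ and $|\mu|\leq\beta$ yields
\[
\hat{\mathbb{E}}[\langle p,B_{s}\rangle^{2}]\leq 2\,G_{1}(pp^{T})\,s+C|p|^{2}s^{3/2}+g(p)^{2}s^{2}.
\]
Combining this with Lemma~\ref{STZ lemma for generalized G} and the drift bound from the previous paragraph gives a conditional estimate
\[
E_{P}\bigl[|\langle p,M_{t+s}^{P}-M_{t}^{P}\rangle|^{2}\bigm|\mathcal{F}_{t}\bigr]\leq 2\,G_{1}(pp^{T})\,s+o(s).
\]
A standard argument (as in \cite{STZ}) upgrades such a uniform conditional Lipschitz-in-expectation bound on the predictable increasing process $p^{\top}\langle M^{P}\rangle_{\cdot}p$ to pathwise absolute continuity with density bounded Lebesgue-a.e. by $2G_{1}(pp^{T})$. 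Letting $p$ run through a countable dense subset of $\mathbb{R}^{d}$ and invoking continuity in $p$ gives $\langle A,d\langle M^{P}\rangle_{t}/dt\rangle\leq 2G_{1}(A)$ for every $A\in\mathbb{S}(d)$; setting $\widehat{\gamma}_{t}:=(d\langle M^{P}\rangle_{t}/dt)^{1/2}$, this is exactly $\widehat{\gamma}_{t}\in\Gamma$, so $d\langle M^{P}\rangle_{t}/dt\in\Gamma\Gamma^{T}$.

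The main obstacle is precisely this last upgrade: passing from the conditional Lipschitz-in-expectation estimate $E_{P}[\mathrm{tr}\langle M^{P}\rangle_{t+s}-\mathrm{tr}\langle M^{P}\rangle_{t}\mid\mathcal{F}_{t}]\leq Cs$ to pathwise absolute continuity of the increasing process $\langle M^{P}\rangle$ together with a pointwise bound on its density. This is the technical heart of the Soner--Touzi--Zhang approach in \cite{STZ}, and I expect to invoke it from there rather than reprove it. A subsidiary care point is the choice of a common $P$-null set outside of which all the pointwise inequalities hold for every $p$, which is handled by restricting to a countable dense subset and exploiting continuity of the maps $p\mapsto g(p)$ and $p\mapsto G_{1}(pp^{\top})$.
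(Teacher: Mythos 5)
Your treatment of the drift part is correct, and it takes a genuinely different (and cleaner) route than the paper: the paper builds $A^{P}$ from discrete sums of conditional expectations along dyadic partitions and a weak $L^{2}$ limit, while you get it directly from Doob--Meyer applied to the $P$-supermartingales $\langle p,B_{t}\rangle-g(p)t$ (justified via Proposition \ref{properties of generalized G-BM} and Lemma \ref{STZ lemma for generalized G}). One presentational caveat: what uniqueness of the decomposition actually gives is that $g(p)t-\langle p,A^{P}_{t}\rangle$ is non-decreasing, i.e. the increment bound $\langle p,A^{P}_{t}-A^{P}_{s}\rangle\le g(p)(t-s)$; your displayed inequality only compares with time $0$, and it is the increment version that yields Lipschitz continuity and the a.e.\ density bound.

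The quadratic-variation half has a genuine gap. Membership $\frac{d\langle M^{P}\rangle_{t}}{dt}\in\Gamma\Gamma^{T}$ is equivalent to $\frac12\langle A,\frac{d\langle M^{P}\rangle_{t}}{dt}\rangle\le G_{1}(A)$ for \emph{every} $A\in\mathbb{S}(d)$, since $\Gamma\Gamma^{T}$ coincides with the closed convex hull of $\{\gamma\gamma^{T}:(\gamma,\mu)\in\Theta\}$; the constraints coming from indefinite and negative semidefinite $A$ encode \emph{lower} bounds on the density (e.g.\ $A=-I$ forces $\mathrm{tr}(S)\ge-2G_{1}(-I)$). Your argument only tests rank-one directions $A=pp^{T}$ through second moments of $\langle p,M^{P}\rangle$, and letting $p$ run over a dense set plus continuity in $p$ does not extend the inequality beyond rank-one matrices. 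Concretely, if $d=2$ and $\{\gamma\gamma^{T}:(\gamma,\mu)\in\Theta\}=\{\mathrm{diag}(1,2),\mathrm{diag}(2,1)\}$, then $S=\mathrm{diag}(1.4,1.4)$ satisfies $p^{T}Sp\le2G_{1}(pp^{T})$ for all $p$ but violates the constraint from $A=-I$ and lies outside $\Gamma\Gamma^{T}$; so the rank-one bounds you establish cannot imply the theorem. The repair stays within your scheme: for general symmetric $A$ use $E_{P}[\langle A,(B_{t+s}-B_{t})(B_{t+s}-B_{t})^{T}\rangle\mid\mathcal{F}_{t}]\le\hat{\mathbb{E}}[\langle A,B_{s}B_{s}^{T}\rangle]\le2G_{1}(A)s+Cs^{3/2}$ (the computation over $\mathcal{P}_{0}$ via Lemma \ref{integration transformation} is valid for indefinite $A$), transfer it to $\langle A,\langle M^{P}\rangle_{t+s}-\langle M^{P}\rangle_{t}\rangle$ using the drift bound, and run the argument for a countable dense set of $A\in\mathbb{S}(d)$. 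As for the ``upgrade'' you defer to \cite{STZ}: it need not be a black box — iterating the conditional estimate over subpartitions removes the $o(s)$, and then $\langle A,\langle M^{P}\rangle_{t}\rangle-2G_{1}(A)t$ is a continuous, predictable, finite-variation $P$-supermartingale, hence non-increasing by uniqueness of the canonical decomposition, which is the pathwise bound. The paper sidesteps both issues by estimating $E_{P}[(\sum_{k}\langle A,\Delta B_{k}\Delta B_{k}^{T}\rangle-G_{1}(A)(t-s))^{+}]$ directly through the representation over $\mathcal{P}_{0}$ and the BDG inequality, for arbitrary symmetric $A$ — exactly the generality your version is missing.
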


Using the above theorem, we can now define the stochastic integral with respect to generalized $G$-Brownian motion.
For each $p\geq1$ and $0<T<\infty$, set
\begin{align*}
M_G^{p,0}(0,T):=& \{\eta:=\eta_t(\omega)=\xi_0I_{\{0\}}+\sum_{j=0}^{N-1}\xi_j(\omega)I_{(t_j,t_{j+1}]}(t),\ \text{for any}\ N\in\mathbb{N},  \\
&  0=t_0\leq t_1\leq \cdots\leq t_N\leq T, \xi_j\in L_{C}^p(\Omega_d)\cap \mathcal{F}_{t_j},j=0,1\cdots,N\}.\\
M_b^{p,0}(0,T):=& \{\eta:=\eta_t(\omega)=\xi_0I_{\{0\}}+\sum_{j=0}^{N-1}\xi_j(\omega)I_{(t_j,t_{j+1}]}(t),\ \text{for any}\ N\in\mathbb{N},  \\
&  0=t_0\leq t_1\leq \cdots\leq t_N\leq T, \xi_j\in L_{b}^p(\Omega_d)\cap \mathcal{F}_{t_j},j=0,1\cdots,N\}.
\end{align*}
For each $\eta\in M_b^{p,0}(0,T)$, set $\|\eta\|_{M_b^{p}(0,T)}:=(\hat
{\mathbb{E}}[\int_{0}^T|\eta_{t}|^{p}dt])^{\frac{1}{p}} $ and denote by
$M_C^{p}(0,T)$ ($M_b^{p}(0,T)$, resp.) the completion of $M_C^{p,0}(0,T)$ ($M_b^{p,0}(0,T)$, resp.) under the norm $\|\cdot
\|_{M_b^{p}(0,T)}$.

Then for each $\eta\in M_b^{2,0}(0,T;\mathbb{R}^d)$, we define the stochastic integral with respect to
$B_t$  as
\[
\int_{0}^{T} \langle \eta_{t},dB_t\rangle:=\sum_{j=0}^{N-1}\langle\xi_{{j}} ,B_{t_{j+1}}-B_{t_{j}
}\rangle,
\]
 which is a linear mapping from $M_b^{2,0}(0,T;\mathbb{R}^d)$ to
${L}_b^{2}(\Omega_d)$. Moreover, it holds that
\begin{proposition}\label{Bcontrol}For each $\eta\in M_b^{2,0}(0,T;\mathbb{R}^d)$ and $Y\in\mathcal{L}(\Omega_d)$, we have
	\begin{equation}
	\hat{\mathbb{E}}[-\beta\int_{0}^{T} |\eta_{t}|d{t}+Y]\leq \hat{\mathbb{E}}[\int_{0}^{T} \langle \eta_{t},dB_t\rangle+Y]\leq \hat{\mathbb{E}}[\beta\int_{0}^{T} |\eta_{t}|d{t}+Y],\end{equation}
	\begin{equation}\label{87676535467}
	\hat{\mathbb{E}}[(\int_{0}^{T} \langle\eta_{t},d B_t\rangle)^2]  \leq 2(\overline{\sigma}^2+\beta^2T)\hat{\mathbb{E}}[\int_0^T|\eta_t|^2dt].
	\end{equation}
\end{proposition}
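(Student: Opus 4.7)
The plan is to prove both bounds by fixing $P\in\mathcal{P}$, exploiting the semimartingale decomposition $B_t=M_t^P+A_t^P$ from Theorem \ref{generalized G-BM decomposition}, bounding each piece separately, and then taking the supremum over $P$. Since $\eta\in M_b^{2,0}(0,T;\mathbb{R}^d)$, it suffices to consider an elementary representation $\eta=\sum_{j=0}^{N-1}\xi_j I_{(t_j,t_{j+1}]}$ with $\xi_j\in L_b^2(\Omega_d)\cap\mathcal{F}_{t_j}$; then
$$\int_0^T\langle\eta_t,dB_t\rangle=\sum_{j=0}^{N-1}\langle\xi_j,M_{t_{j+1}}^P-M_{t_j}^P\rangle+\sum_{j=0}^{N-1}\langle\xi_j,A_{t_{j+1}}^P-A_{t_j}^P\rangle=:I^M+I^A.$$

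For the martingale term, the bound $\frac{d\langle M\rangle^P_t}{dt}\in\Gamma\Gamma^T$ gives $\langle M\rangle_{t_{j+1}}^P-\langle M\rangle_{t_j}^P\leq\overline{\sigma}^2(t_{j+1}-t_j)I_{d\times d}$, so each $M_t^P$ is square-integrable and a step-by-step conditional expectation argument using $\xi_j\in\mathcal{F}_{t_j}\cap L^2(P)$ shows that $I^M\in L^2(P)$ with $E_P[I^M]=0$ and
$$E_P[(I^M)^2]=\sum_{j=0}^{N-1}E_P[\xi_j^T(\langle M\rangle_{t_{j+1}}^P-\langle M\rangle_{t_j}^P)\xi_j]\leq\overline{\sigma}^2 E_P\Big[\int_0^T|\eta_t|^2\,dt\Big].$$
For the finite-variation term, $\frac{dA^P_t}{dt}\in\Sigma$ implies $|\frac{dA^P_t}{dt}|\leq\beta$ a.e., hence
$$-\beta\int_0^T|\eta_t|\,dt\ \leq\ I^A=\int_0^T\Big\langle\eta_t,\tfrac{dA^P_t}{dt}\Big\rangle dt\ \leq\ \beta\int_0^T|\eta_t|\,dt,\qquad P\text{-a.s.}$$

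For the first inequality, since $I^M\in L^1(P)$ with mean zero, additivity gives $E_P[\int_0^T\langle\eta,dB\rangle+Y]=E_P[I^A+Y]$, which is sandwiched between $E_P[\mp\beta\int_0^T|\eta|\,dt+Y]$. Passing to the supremum over $P\in\mathcal{P}$ (or equivalently over $\mathcal{P}_0$ via Lemma \ref{integration transformation}) on both sides yields the desired two-sided bound. For \eqref{87676535467}, using $(I^M+I^A)^2\leq 2(I^M)^2+2(I^A)^2$ together with Cauchy--Schwarz $\big(\int_0^T|\eta_t|dt\big)^2\leq T\int_0^T|\eta_t|^2 dt$ gives
$$E_P\Big[\Big(\int_0^T\langle\eta_t,dB_t\rangle\Big)^2\Big]\leq 2(\overline{\sigma}^2+\beta^2 T)E_P\Big[\int_0^T|\eta_t|^2\,dt\Big]\leq 2(\overline{\sigma}^2+\beta^2 T)\hat{\mathbb{E}}\Big[\int_0^T|\eta_t|^2\,dt\Big],$$
and taking the supremum over $P$ on the left finishes the proof.

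The only delicate point is the separation $E_P[I^M+Y]=E_P[Y]$ when $Y$ is merely in $\mathcal{L}(\Omega_d)$ rather than $L^1(P)$; this is handled by noting that $I^M\in L^2(P)$ has finite mean $0$, so the expectation of the sum is always well-defined whenever $E_P[Y]$ is, and linearity applies. Beyond this bookkeeping, the argument is a routine combination of Theorem \ref{generalized G-BM decomposition} with the deterministic bounds defining $\overline{\sigma}$ and $\beta$.
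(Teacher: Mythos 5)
Your proof is correct and follows essentially the same route as the paper: fix $P\in\mathcal{P}$, use the decomposition $B=M^P+A^P$ from Theorem \ref{generalized G-BM decomposition}, bound the martingale part through its quadratic variation (giving $\overline{\sigma}^2$) and the drift part through $|\frac{dA^P_t}{dt}|\leq\beta$, combine via $(a+b)^2\leq 2a^2+2b^2$ and Cauchy--Schwarz, and take the supremum over $P$. The only difference is that you spell out the first inequality (using the vanishing mean of the martingale term), which the paper dismisses as "similar," and your measure-theoretic bookkeeping for $Y\in\mathcal{L}(\Omega_d)$ is fine.
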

\begin{proof}	
	We just prove (2), since the proof for (1) is similar. For any $P\in \mathcal{P}$, by Theorem \ref{generalized G-BM decomposition},  we have
	\begin{equation}\label{5678934224}
	\begin{split}
	E_P[(\int_{0}^{T} \langle\eta_{t},dB_t\rangle)^2]&\leq 2E_P[(\int_{0}^{T} \langle\eta_{t},dM_t^P\rangle)^2]+2E_P[(\int_{0}^{T}\langle\eta_{t},dA^P_t\rangle)^2]\\
	&\leq  2E_P[\int_{0}^{T} \langle\eta_{t}\eta_{t}^T,d\langle M^P\rangle^P_t\rangle]+2E_P[(\int_{0}^{T}|\langle\eta_{t},dA^P_t\rangle|)^2]\\
	&\leq  2\overline{\sigma}^2E_P[\int_{0}^{T} |\eta_{t}|^2dt]+2\beta^2T E_P[\int_{0}^{T} |\eta_{t}|^2dt].\end{split}
	\end{equation}
	Taking supremum on both sides, we get the desired result.
\end{proof}

It is worth mentioning that $\int_0^T \langle\eta_t ,d B_t\rangle$ is defined q.s., and under each $P$, it is equivalent to the classical stochastic integral with respect to semi-martingale $B_t$.
By above proposition, the stochastic integral can be extended continuously to $M_b^{2}(0,T;\mathbb{R}^d)$ just as in It\^{o}'s way. Remark that when $\eta\in M_C^2(0,T;\mathbb{R}^d)$, we have $\int_0^T \langle\eta_t ,d B_t\rangle\in L_C^2(\Omega_d)$.
Moreover, we can also obtain stochastic integral on  optional  time interval as \cite{LP}.

The mapping $\tau: \Omega_d\rightarrow [0, \infty)$ is called a stopping time if $(\tau\leq t)\in\mathcal{F}_t$
 and an optional time if $(\tau< t)\in\mathcal{F}_t$ for each $t \geq 0$.
\begin{lemma}\label{stop integral lemma of LP}
For each optional time $\tau$  and $\eta\in M_b^2(0,T;\mathbb{R}^d)$, we have
$
\eta I_{[0,\tau]}\in M_b^2(0,T;\mathbb{R}^d)
$
and
\begin{equation}\label{221111122}
\int_{0}^{\tau\wedge t}\langle \eta_s,dB_s\rangle=\int_{0}^{t}\langle I_{[0,\tau]}\eta_s,dB_s\rangle.
\end{equation}
\end{lemma}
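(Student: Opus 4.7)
The plan is to prove the lemma by a double approximation argument: first approximate the optional time $\tau$ by a decreasing sequence of dyadic-valued stopping times $\tau_n$, for which membership in $M_b^{2,0}$ and the desired identity are transparent, and then pass to the limit using the $L^2$-isometry of Proposition \ref{Bcontrol} together with the fact, already noted after that proposition, that the $G$-stochastic integral coincides q.s.\ with the classical It\^o integral under every $P\in\mathcal{P}$.

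Concretely, I would set
\[
\tau_n:=\sum_{k\geq 1} \tfrac{k}{2^n}\, I_{\{(k-1)/2^n\leq \tau<k/2^n\}}.
\]
Since $\tau$ is optional, $\{\tau<k/2^n\}\in\mathcal{F}_{k/2^n}$, hence each $\tau_n$ is a stopping time, $\tau_n\downarrow \tau$, and $\tau_n-\tau\leq 2^{-n}$. Starting from a simple step process $\eta=\sum_j \xi_j I_{(t_j,t_{j+1}]}\in M_b^{2,0}(0,T;\mathbb{R}^d)$, I would refine its partition to include the dyadic grid $\{kT/2^n\}$; on each refined subinterval $(t_j,t_{j+1}]$ with consecutive dyadic endpoints, $\{\tau_n\geq s\}=\{\tau\geq t_j\}\in\mathcal{F}_{t_j}$, so $\eta I_{[0,\tau_n]}$ is again an element of $M_b^{2,0}$. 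For such discrete data the stopped-integral identity follows directly by telescoping, because $\tau_n\wedge t$ is itself a partition point and
\[
\int_0^{\tau_n\wedge t}\langle \eta_s,dB_s\rangle=\sum_{t_{j+1}\leq \tau_n\wedge t}\langle \xi_j,B_{t_{j+1}}-B_{t_j}\rangle
\]
matches the right-hand side term by term.

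Next, I would pass to the limit in $n$. For bounded step $\eta$,
\[
\hat{\mathbb{E}}\Big[\int_0^T |\eta_s|^2\,|I_{[0,\tau_n]}(s)-I_{[0,\tau]}(s)|^2\,ds\Big]\leq \|\eta\|_\infty^2\,\hat{\mathbb{E}}[\tau_n-\tau]\leq \|\eta\|_\infty^2\, 2^{-n},
\]
so $\eta I_{[0,\tau_n]}\to \eta I_{[0,\tau]}$ in the $M_b^2$-norm; in particular $\eta I_{[0,\tau]}\in M_b^2(0,T;\mathbb{R}^d)$, and by \eqref{87676535467} the right-hand side of \eqref{221111122} for $\tau_n$ converges in $L_b^2$ to the right-hand side for $\tau$. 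For the left-hand side, one selects a q.s.\ continuous version of $t\mapsto \int_0^t\langle \eta_s,dB_s\rangle$, so that $\int_0^{\tau_n\wedge t}\to \int_0^{\tau\wedge t}$ $P$-a.s.\ for every $P\in\mathcal{P}$; since the identity holds along the approximating sequence and the $G$-integral equals the classical stochastic integral under each $P$, the equality propagates $P$-a.s.\ for every $P$, hence q.s. Finally, for general $\eta\in M_b^2$ I would approximate $\eta$ by bounded simple step processes $\eta^k\to \eta$ in $M_b^2$-norm and run a standard diagonal argument, using \eqref{87676535467} once more to transfer the identity to the limit.

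The main obstacle is controlling the two approximations simultaneously: the $\mathcal{F}_{t_j}$-measurability of the restricted coefficients $\xi_j I_{\{\tau_n\geq t_{j+1}\}}$ relies essentially on $\tau_n$ taking only dyadic values (this is where the passage optional $\Rightarrow$ stopping is used), while the unboundedness of a generic $\eta\in M_b^2$ forces one to couple the time-approximation with a bounded-step approximation of $\eta$. The $L^2$-control of Proposition \ref{Bcontrol} reduces both convergence questions to $M_b^2$-norm convergence of the integrands, and the equivalence with the classical semimartingale integral under each $P$ identifies the q.s.\ limit of $\int_0^{\tau_n\wedge t}\langle \eta_s,dB_s\rangle$ with $\int_0^{\tau\wedge t}\langle \eta_s,dB_s\rangle$.
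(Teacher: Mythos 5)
Your proposal is correct and follows essentially the same route as the proof the paper defers to, namely the argument of Li--Peng \cite{LP}: approximate the optional time from above by dyadic-valued stopping times (using $\{\tau\geq t_j\}\in\mathcal{F}_{t_j}$ so that $\eta I_{[0,\tau_n]}$ stays in $M_b^{2,0}$), verify the identity by telescoping for step data, and pass to the limit in the $M_b^2$-norm via Proposition \ref{Bcontrol} together with the q.s.\ identification of the $G$-integral with the classical one under each $P\in\mathcal{P}$. The paper itself gives no further details, so nothing in your outline departs from its intended proof.
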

\begin{proof}
The proof is similar to the one of \cite{LP}.
\end{proof}
\begin{remark}
\upshape	{Note that the optional times may do not possess enough continuity  in $\omega$, so in general we cannot expect $
	\eta I_{[0,\tau]}\in M_C^2(0,T;\mathbb{R}^d)$ for $\eta\in M_C^2(0,T;\mathbb{R}^d)$,  see \cite{HWZ}.}
\end{remark}

The quadratic variation process of $B$ is defined by
\begin{equation}
\langle B\rangle_t:=B_tB_t^T-\int_0^tB_sdB^T_s-\int_0^tdB_sB^T_s.
\end{equation}
For any $P\in\mathcal{P}$, we have
$$
\langle B\rangle_t=\langle B\rangle^P_t=\langle  M^P\rangle^P_t,\ \ \ \ P\text{-a.s.}
$$
Hence,
$$\frac{d\langle B\rangle_t}{dt}\in \Gamma\Gamma^T, \ \ \ \ \text{q.s.}
$$
Thus we can define the stochastic integral $\int_0^T\langle \eta_t, d\langle B\rangle_t\rangle $ for $\eta\in M_b^1(0,T;\mathbb{S}(d))$ similarly as the one for $dB_t$ and following property hold:
$$\hat{\mathbb{E}}[|\int_0^T\langle \eta_t, d\langle B\rangle_t\rangle|]\leq \overline{\sigma}^2\sqrt{d} \hat{\mathbb{E}}[\int_0^T|\eta_t| dt].$$

\begin{definition}
	A process $(M_{t})_{t\geq0}$ is called a
	{$G$-martingale} if for each
	$t\in \lbrack0,\infty)$, $M_{t}\in L_{C}^{1}(\Omega_d)\cap \mathcal{F}_{t}$ and for
	each $s\in \lbrack0,t]$, we have
$
	\hat{\mathbb{E}}_s[M_{t}]=M_{s}.
$
\end{definition}

\begin{lemma}\label{G martingale lemma of Peng}
For each $A\in \mathbb{S}(d),p \in \mathbb{R}^d$ and $t\geq 0$,
	\begin{equation}
	\mathbb{\hat{E}}[\frac12\langle A,\langle B\rangle_t\rangle+\langle p,B_t\rangle]=G(A,p)t.
	\end{equation}
\end{lemma}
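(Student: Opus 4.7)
The plan is to apply Lemma \ref{integration transformation} to the random variable $X:=\tfrac{1}{2}\langle A,\langle B\rangle_t\rangle+\langle p,B_t\rangle$, which converts $\hat{\mathbb{E}}[X]$ into a supremum over $\mathcal{A}^{\Theta}$ of expectations under the reference Wiener measure $P^0$. A preliminary step is to verify $X\in L^1_C(\Omega_d)$: the linear term $\langle p,B_t\rangle$ lies in $L^1_C$ by Remark \ref{L1G contain l,lip r.v.}, while for the quadratic variation we expand via its defining formula $\langle B\rangle_t = B_tB_t^T-\int_0^tB_sdB_s^T-\int_0^t dB_sB_s^T$ and approximate the two $dB$-integrals by elementary integrands in $M_b^{2,0}(0,t;\mathbb{R}^d)$; each such approximation is in $L^1_C(\Omega_d)$, and the Proposition \ref{Bcontrol} estimate together with polynomial-growth control upgrades the limit to $L^1_C$.

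Next, for any $(\gamma,\mu)\in\mathcal{A}^{\Theta}$ the measure $P_{\gamma,\mu}\in\mathcal{P}_0$ is the pushforward of $P^0$ by $\omega^0\mapsto B^{0,\gamma,\mu}_{\cdot}(\omega^0)$, so the canonical process $B$ under $P_{\gamma,\mu}$ has the same law as the Itô semimartingale $B^{0,\gamma,\mu}_{\cdot}=\int_0^{\cdot}\gamma_s dW_s+\int_0^{\cdot}\mu_s ds$ under $P^0$. The pathwise quadratic variation $\langle B\rangle_t$ under $P_{\gamma,\mu}$ therefore coincides in distribution with the Itô quadratic variation $\int_0^t\gamma_s\gamma_s^T ds$ under $P^0$. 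Using the vanishing $P^0$-expectation of the bounded stochastic integral $\int_0^t\gamma_s dW_s$, we obtain
\begin{equation*}
E_{P_{\gamma,\mu}}[X]=E_{P^0}\Bigl[\int_0^t\Bigl(\tfrac{1}{2}\langle A,\gamma_s\gamma_s^T\rangle+\langle p,\mu_s\rangle\Bigr)ds\Bigr]\leq G(A,p)\,t,
\end{equation*}
where the inequality is the pointwise bound from the definition (\ref{generalized G definition}), valid because $(\gamma_s(\omega^0),\mu_s(\omega^0))\in\Theta$ for a.e.\ $s$, $P^0$-a.s. Taking the supremum over $\mathcal{A}^{\Theta}$ and invoking Lemma \ref{integration transformation} gives $\hat{\mathbb{E}}[X]\leq G(A,p)\,t$.

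The reverse inequality is achieved by constant strategies: since $\Theta$ is bounded and closed, for any $(\gamma^*,\mu^*)\in\Theta$ the deterministic choice $\gamma_s\equiv\gamma^*$, $\mu_s\equiv\mu^*$ lies in $\mathcal{A}^{\Theta}$ and yields $E_{P^0}[X(B^{0,\gamma^*,\mu^*}_{\cdot})]=\bigl(\tfrac{1}{2}\langle A,\gamma^*(\gamma^*)^T\rangle+\langle p,\mu^*\rangle\bigr)t$. Taking the supremum over $(\gamma^*,\mu^*)\in\Theta$ and using the definition of $G(A,p)$ produces $\hat{\mathbb{E}}[X]\geq G(A,p)\,t$, completing the proof.

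The main obstacle is the first step, namely establishing both that $\langle B\rangle_t\in L^1_C(\Omega_d)$ and that under each $P_{\gamma,\mu}\in\mathcal{P}_0$ the pathwise-defined $\langle B\rangle_t$ coincides with the Itô quadratic variation $\int_0^t\gamma_s\gamma_s^T ds$. Both are routine for symmetric $G$-Brownian motion, but in the generalized setting with a non-trivial drift component one has to be a little careful when identifying stochastic integrals against $B$ with those against its martingale part; this is where Theorem \ref{generalized G-BM decomposition} and the $P$-a.s. agreement of $\langle B\rangle$ with $\langle M^P\rangle^P$ mentioned right after it do the bookkeeping.
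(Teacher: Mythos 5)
Your proposal is correct and follows essentially the same route as the paper: Lemma \ref{integration transformation} reduces $\hat{\mathbb{E}}[\tfrac12\langle A,\langle B\rangle_t\rangle+\langle p,B_t\rangle]$ to $\sup_{(\gamma,\mu)\in\mathcal{A}^{\Theta}}E_{P^0}[\tfrac12\int_0^t\langle A,\gamma_s\gamma_s^T\rangle ds+\int_0^t\langle p,\mu_s\rangle ds]$, which is bounded by $G(A,p)t$ pointwise, and equality is obtained with constant controls from $\Theta$ (the paper's ``$\gamma_s=\gamma_1 s,\mu_s=\mu_1 s$'' is evidently a typo for the constant choice you make). Your extra care in checking $X\in L^1_C(\Omega_d)$ and identifying $\langle B\rangle_t$ under $P_{\gamma,\mu}$ with $\int_0^t\gamma_s\gamma_s^T ds$ is exactly what the paper's ``direct calculation'' silently assumes.
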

\begin{proof}By a direct calculation, we have
	\begin{equation}\label{3333333}
	\mathbb{\hat{E}}[\frac12\langle A,\langle B\rangle_t\rangle+\langle p,B_t\rangle]=\sup_{(\gamma,\mu) \in \mathcal{A}^{\Theta}}E_{P^0}[\frac12\int_0^t\langle A,\gamma_s\gamma_s^T\rangle ds+\int_0^t\langle p,\mu_s\rangle ds]\leq G(A,p)t
	\end{equation}
	by the definition of $G$ and the fact that
	$(\gamma_s,\mu_s)$ take values in $\Theta$.
On the other hand, choosing $(\gamma_1,\mu_1)\in \Theta$ so that $G(A,p)=\frac{1}{2}\langle A,\gamma_1{\gamma_1}^T\rangle+\langle p,\mu_1\rangle$
and taking $\gamma_s=\gamma_1s,\mu_s=\mu_1s$, we could get equality in  equation (\ref{3333333}) and this completes the proof.
\end{proof}
\begin{proposition}\label{martingale proposition}
	Let $\eta\in  M_{C}
	^{1}(0,T;\mathbb{S}(d))$, $\zeta\in  M_{C}
	^{2}(0,T;\mathbb{R}^d)$. Then
	\[
	M_{t}:=\int_{0}^{t}\langle \eta_s,d\langle B\rangle_{s}\rangle +2\int_{0}^{t}\langle \zeta_s,d B_{s}\rangle-2\int_{0}^{t}G(\eta_s,\zeta_s)ds
	\]
	is a  $G$-martingale on $[0,T]$.
\end{proposition}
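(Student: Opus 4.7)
The plan is the standard two-step pattern: verify the $G$-martingale identity first for step processes using the independent-increments property of generalized $G$-Brownian motion, then pass to $\eta\in M_C^1$ and $\zeta\in M_C^2$ by a density/continuity argument. Throughout I would rely on Proposition \ref{properties of generalized G-BM} (stationarity and independent increments), Lemma \ref{G martingale lemma of Peng} (the identity $\hat{\mathbb{E}}[\tfrac12\langle A,\langle B\rangle_t\rangle+\langle p,B_t\rangle]=G(A,p)t$), Proposition \ref{Bcontrol} and the $L^1$ estimate for $d\langle B\rangle$ stated just before the proposition.

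First, assume $\eta$ and $\zeta$ are simple, so that on a common partition $0=t_0<\cdots<t_N=T$ we have $\eta_s=A_j$, $\zeta_s=p_j$ on $(t_j,t_{j+1}]$ with $A_j,p_j\in L_C^2(\Omega_d)\cap\mathcal{F}_{t_j}$. Fix $0\le s\le t\le T$ and consider the increment $M_t-M_s$. Writing it as a telescoping sum over the partition pieces that straddle $[s,t]$, it suffices to show that for each $j$ with $t_j\ge s$,
\[
\hat{\mathbb{E}}_{t_j}\!\left[\langle A_j,\langle B\rangle_{t_{j+1}}-\langle B\rangle_{t_j}\rangle+2\langle p_j,B_{t_{j+1}}-B_{t_j}\rangle-2G(A_j,p_j)(t_{j+1}-t_j)\right]=0.
\]
By the independence and stationarity in Proposition \ref{properties of generalized G-BM}, applied to $\varphi(x,y,B_{t_{j+1}}-B_{t_j},\langle B\rangle_{t_{j+1}}-\langle B\rangle_{t_j})=\langle x,\cdot\rangle+2\langle y,\cdot\rangle$ with the $\mathcal{F}_{t_j}$-measurable freezing $x=A_j, y=p_j$, this conditional expectation equals
\[
\bigl(\hat{\mathbb{E}}[\langle x,\langle B\rangle_{t_{j+1}-t_j}\rangle+2\langle y,B_{t_{j+1}-t_j}\rangle]\bigr)_{x=A_j,y=p_j}-2G(A_j,p_j)(t_{j+1}-t_j),
\]
and the bracketed unconditional expectation equals $2G(x,y)(t_{j+1}-t_j)$ by Lemma \ref{G martingale lemma of Peng} for each fixed $(x,y)$. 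Substituting gives zero, and the tower property finishes the partial pieces at the endpoints $s$ and $t$. This also shows $M_t\in L_C^1(\Omega_d)\cap\mathcal{F}_t$ in the simple case.

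For general $\eta\in M_C^1(0,T;\mathbb{S}(d))$ and $\zeta\in M_C^2(0,T;\mathbb{R}^d)$, pick simple approximants $\eta^n\to\eta$ in $M_C^1$ and $\zeta^n\to\zeta$ in $M_C^2$ and let $M^n$ be the corresponding process. The three pieces of $M^n_t-M_t$ converge to $0$ in $L_C^1$: the $d\langle B\rangle$-integral by the bound $\hat{\mathbb{E}}[|\int_0^t\langle\eta^n_s-\eta_s,d\langle B\rangle_s\rangle|]\le \overline{\sigma}^2\sqrt{d}\,\hat{\mathbb{E}}[\int_0^t|\eta^n_s-\eta_s|ds]$ stated after Proposition \ref{Bcontrol}; the $dB$-integral by the $L^2$ estimate (\ref{87676535467}) of Proposition \ref{Bcontrol}; and the drift $\int_0^t G(\eta^n_s,\zeta^n_s)ds$ by the Lipschitz continuity of the sublinear function $G$ on bounded sets combined with $\hat{\mathbb{E}}[\int_0^t(|\eta^n_s-\eta_s|+|\zeta^n_s-\zeta_s|)ds]\to 0$. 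Hence $M^n_t\to M_t$ in $L_C^1(\Omega_d)$, so $M_t\in L_C^1(\Omega_d)\cap\mathcal{F}_t$, and by the $L^1$-continuity of the conditional $G$-expectation $\hat{\mathbb{E}}_s[\cdot]$ (recalled just before Lemma \ref{STZ lemma for generalized G}), the identity $\hat{\mathbb{E}}_s[M^n_t]=M^n_s$ passes to the limit to give $\hat{\mathbb{E}}_s[M_t]=M_s$.

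The main obstacle is the simple-process step: I need to apply the independence/stationarity identity of Proposition \ref{properties of generalized G-BM} simultaneously to both the $B_{t_{j+1}}-B_{t_j}$ and $\langle B\rangle_{t_{j+1}}-\langle B\rangle_{t_j}$ increments and, on the frozen parameters, invoke Lemma \ref{G martingale lemma of Peng}. The usual technical point is that $A_j,p_j$ are random (not scalar constants), but the freezing identity in Proposition \ref{properties of generalized G-BM} is designed precisely to handle this, so the argument goes through; the extension step is then routine given the estimates already established.
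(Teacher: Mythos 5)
Your proposal is correct and follows essentially the same route as the paper, which simply invokes Lemma \ref{G martingale lemma of Peng} together with the properties of $\hat{\mathbb{E}}_t$ and a standard approximation argument; you have merely spelled out the step-process verification (freezing the $\mathcal{F}_{t_j}$-measurable coefficients, applying the lemma, telescoping with the tower property) and the passage to the limit via the estimates of Proposition \ref{Bcontrol}. No changes needed.
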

\begin{proof}
By a standard approximation argument, the proof follows from Lemma \ref{G martingale lemma of Peng} and the properties of $\mathbb{\hat{E}}_t$.
\end{proof}

Now we are ready to state the SDEs driven by the generalized $G$-Brownian motion:
\begin{equation}
\label{SDE}
dX^{x}_{t}=b(X_{t}^{x})ds+\sum_{i,j=1}^dh_{ij}(X_{t}^{x})d\langle B^i,B^j\rangle_t+\sum_{j=1}^{d}\sigma_j(X_{t}^{x})dB^j_t,\  X_{0}^{x}=x; \ \ \ t\geq 0,
\end{equation}
where $x\in \mathbb{R}^n$ and
$b,h_{ij}=h_{ji},\sigma_j:\mathbb{R}^n\rightarrow \mathbb{R}^n$ are given Lipschitz functions. Denote by $\sigma=[\sigma_1,\cdots,\sigma_d]$.
From Proposition \ref{Bcontrol} and the contraction mapping method as in \cite{P7}, we can obtain that the $G$-SDE \eqref{SDE} has a unique solution $X\in M^2_C(0,T).$ Moreover, we have the following It\^{o}'s formula for $G$-SDEs
 \eqref{SDE}.
\begin{theorem}\label{Ito formula}Let $f$ be in $C^2(\mathbb{R}^n)$ such that all the second order partial derivatives  satisfy the polynomial growth condition. Then
	\begin{align}
	f(X^x_t)-f(x)=&\int_{0}^t\langle Df(X^x_s),b(X^x_s)\rangle ds+\int^t_0\langle Df(X^x_s),\sigma(X_{s}^{x}) dB_s\rangle
+\sum_{i,j=1}^d\int_{0}^t\langle Df(X^x_s),h_{ij}(X^x_s)d\langle B^i,B^j\rangle_s\rangle\nonumber
\\
& +\frac12\int_{0}^t\langle \sigma(X^x_s)^TD^2f(X^x_s)\sigma(X^x_s),d\langle B\rangle_s\rangle.
	\end{align}

\end{theorem}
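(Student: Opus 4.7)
My strategy is to reduce the quasi-sure identity to the classical It\^o formula path by path under each $P\in\mathcal{P}$, and then lift the resulting identity using that ``$P$-a.s.\ for every $P\in\mathcal{P}$'' implies ``q.s.''.

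First, fix $P\in\mathcal{P}$. By Theorem \ref{generalized G-BM decomposition}, $B_t=M^P_t+A^P_t$ is a continuous $P$-semimartingale with $d\langle M^P\rangle^P_t/dt\in\Gamma\Gamma^T$ and $dA^P_t/dt\in\Sigma$. The remark following Proposition \ref{Bcontrol} identifies the $G$-integrals $\int_0^\cdot\langle\zeta_s,dB_s\rangle$ and $\int_0^\cdot\langle\eta_s,d\langle B\rangle_s\rangle$ with, respectively, the classical It\^o integral against the semimartingale $B$ and the Lebesgue--Stieltjes integral against $\langle M^P\rangle^P=\langle B\rangle$, each equality holding $P$-a.s. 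Substituting the decomposition of $B$ into \eqref{SDE} therefore shows that $X^x_t$ is itself a continuous $P$-semimartingale: its local martingale part is $\int_0^t\sigma(X^x_s)\,dM^P_s$, while its finite-variation part is the sum of the $b\,ds$, $h_{ij}\,d\langle B^i,B^j\rangle$ and $\sigma\,dA^P$ contributions.

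Next, assume first that $f\in C^2_b(\mathbb{R}^n)$. The standard It\^o formula applied under $P$ yields
$f(X^x_t)-f(x)=\int_0^t\langle Df(X^x_s),dX^x_s\rangle+\tfrac12\int_0^t\mathrm{tr}\bigl(D^2 f(X^x_s)\,d\langle X^x\rangle^P_s\bigr)$,
where $d\langle X^x\rangle^P_s=\sigma(X^x_s)\,d\langle B\rangle_s\,\sigma(X^x_s)^T$ since only the $\sigma\,dM^P$ term contributes to the quadratic variation. Writing the trace as $\langle \sigma(X^x_s)^TD^2f(X^x_s)\sigma(X^x_s),d\langle B\rangle_s\rangle$ and expanding the first integral along the semimartingale decomposition of $X^x$ recovers precisely the four integrals in the statement, the identity holding $P$-a.s. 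Since $P$ was arbitrary, the exceptional set is $P$-null for every $P\in\mathcal{P}$, hence has zero capacity, and the formula holds quasi-surely.

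To remove the boundedness assumption on the derivatives, I would introduce a smooth cutoff $\chi_n\in C^\infty_c(\mathbb{R}^n)$ equal to $1$ on $\{|x|\leq n\}$, apply the bounded case to $f_n:=f\chi_n$, and pass to the limit $n\to\infty$. The passage is justified by the uniform moment estimate $\hat{\mathbb{E}}[\sup_{s\leq T}|X^x_s|^p]<\infty$ for every $p\geq 1$ (which follows from Proposition \ref{Bcontrol}, a BDG-type inequality and Gr\"onwall as in \cite{P7}), combined with the polynomial growth of $D^2f$, which controls the It\^o correction term. The main obstacle is conceptual rather than computational: one must be comfortable switching between the $G$-framework integrals and the classical semimartingale integrals under each individual $P$, and then invoking the characterization of q.s.\ properties; once this identification is secured, the remainder is a routine application of the classical It\^o formula followed by a localization argument.
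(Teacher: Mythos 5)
Your proof is correct and follows essentially the approach the paper intends: the theorem is stated without a detailed proof precisely because, given the semimartingale decomposition of Theorem \ref{generalized G-BM decomposition} and the remark that the q.s.-defined integrals $\int\langle\cdot,dB_s\rangle$ and $\int\langle\cdot,d\langle B\rangle_s\rangle$ coincide $P$-a.s.\ with the classical semimartingale integrals under each $P\in\mathcal{P}$, the classical It\^o formula applied under every $P$ yields the identity outside a set that is $P$-null for all $P$, hence polar. The only point worth making explicit is that the polynomial-growth hypothesis together with the moment estimates for $X^x$ guarantees that the integrands $Df(X^x)\sigma(X^x)$, $\sigma(X^x)^TD^2f(X^x)\sigma(X^x)$, etc., belong to $M^2_b(0,T;\mathbb{R}^d)$ and $M^1_b(0,T;\mathbb{S}(d))$, so the integrals in the statement are well defined in the $G$-framework and the $P$-wise identification applies; your cutoff and limit argument accomplishes exactly this.
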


Finally, we shall investigate the Markov property for the $G$-SDEs (\ref{SDE}).
Let $\tau$ be an optional time satisfying:
\begin{description}
\item [(H)] $c(\{\tau>T\})\rightarrow 0$, as $ T\rightarrow \infty$.
\end{description}
For each $p\geq1$, we set
$${L}_{C}^{0,p,\tau+}(\Omega_d)=\{X=\sum_{i=1}^n\xi_iI_{A_i}: \ n\in\mathbb{N},\ \{A_i\}_{i=1}^n\text{ is an}\ \mathcal{F}_{\tau+}\text{-partition of}\ \Omega_d,\ \xi_i\in L_C^p(\Omega_d),\ i=1,\cdots,n\}$$
and denote by ${L}_{C}^{p,\tau+}(\Omega)$  the completion of ${L}_{C}^{0,p,\tau+}(\Omega)$ under the norm $||\cdot||_p$.
We also define
$$L^{1,\tau+,*}_C(\Omega):=\{X:\text{there exists}\ X_n\in L^{1,\tau+}_C(\Omega_d)\ \text{such that}\ X_n\uparrow X \ q.s.\}.$$
 By a similar analysis as in \cite{HJL}, the conditional expectation $\hat{\mathbb{E}}_{\tau+}$ is well defined on
 $L^{1,\tau+,*}_C(\Omega_d)$ and can preserve most properties of linear conditional expectation  except the linearity.
 The conditional expectation  $\hat{\mathbb{E}}_{\tau}$ for a stopping time $\tau$  satisfying (H) is defined similarly on $L^{1,\tau,*}_C(\Omega_d)$, where $L^{1,\tau,*}_C(\Omega_d)$ is defined analogously to $L^{1,\tau+,*}_C(\Omega)$ with $\mathcal{F}_{\tau+}$ replaced by $\mathcal{F}_{\tau}$.   Then we have
\begin{theorem}\label{extended BM strongmarkov1}
	Let $Y$ be lower semi-continuous on $\Omega_n$ and bounded from below. Then $Y(X^x_{\tau+\cdot}) \in L^{1,\tau+,*}_C(\Omega_d)$ and
	\begin{equation}
	\hat{\mathbb{E}}_{\tau+}[ Y(X^x_{\tau+\cdot})]=\hat{\mathbb{E}}[ Y(X^y_{\cdot})]_{y=X^x_{\tau}}.
	\end{equation}
	Moreover, if $Y\in C_b(\Omega_n)$, then
	\begin{equation}\label{a3 belong lemma}
	Y(X^x_{\tau+\cdot})\in L^{1,\tau+}_C(\Omega_d),\ \ \ \ \text{and} \ \ \ \ Y(X^x_{\tau+\cdot})\in L^{1,\tau}_C(\Omega_d) \ \text{if furthermore $\tau$ is a stopping time.}
	\end{equation}
\end{theorem}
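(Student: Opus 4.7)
The plan is to handle $Y \in C_b(\Omega_n)$ first by discretizing the optional time $\tau$ from above, and then extend to lower semi-continuous $Y$ bounded from below by monotone approximation using Proposition \ref{downward convergence proposition}(1). The starting point is the Markov property at a deterministic time $t$: for $Y \in C_b(\Omega_n)$,
\[
\hat{\mathbb{E}}_t[Y(X^x_{t+\cdot})] = \hat{\mathbb{E}}[Y(X^y_\cdot)]_{y=X^x_t}.
\]
This follows from the flow identity for \eqref{SDE}, the fact that $B_{t+\cdot}-B_t$ is independent of $\mathcal{F}_t$ with the same distribution as $B$ (Proposition \ref{properties of generalized G-BM}(2)), and the definition of $\hat{\mathbb{E}}_t$. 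Lipschitz continuity of $b,h_{ij},\sigma_j$ together with the $L^2$ estimate \eqref{87676535467} yields that $y\mapsto\hat{\mathbb{E}}[Y(X^y_\cdot)]$ is bounded and continuous, so the right-hand side is an element of $L^1_C(\Omega_d)$.

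Next I would set
\[
\tau_n := \sum_{k=0}^{\infty} \tfrac{k+1}{2^n}\,I_{A_{n,k}}, \qquad A_{n,k} := \{k/2^n \le \tau < (k+1)/2^n\},
\]
so that $\tau_n$ is a stopping time with $\tau_n\downarrow \tau$ and each $A_{n,k}$ lies in $\mathcal{F}_{\tau+}$ (since $\{\tau<s\}\in\mathcal{F}_s\subset\mathcal{F}_{\tau+}$ for every $s$). Decomposing
\[
Y(X^x_{\tau_n+\cdot}) = \sum_k I_{A_{n,k}}\,Y(X^x_{(k+1)/2^n+\cdot}),
\]
truncating at $k\le 2^nT$ with tail controlled by $\|Y\|_\infty\,c(\tau>T)\to 0$ via hypothesis (H), and applying the deterministic Markov identity on each $A_{n,k}$, I obtain both $Y(X^x_{\tau_n+\cdot})\in L^{1,\tau+}_C(\Omega_d)$ and the discrete identity
\[
\hat{\mathbb{E}}_{\tau_n+}[Y(X^x_{\tau_n+\cdot})] = \hat{\mathbb{E}}[Y(X^y_\cdot)]_{y=X^x_{\tau_n}}.
\]
Because $\mathcal{F}_{\tau+}\subset\mathcal{F}_{\tau_n+}$, the tower property gives $\hat{\mathbb{E}}_{\tau+}[Y(X^x_{\tau_n+\cdot})]=\hat{\mathbb{E}}_{\tau+}\bigl[\hat{\mathbb{E}}[Y(X^y_\cdot)]_{y=X^x_{\tau_n}}\bigr]$.

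Passing $n\to\infty$ is then a dominated-convergence step on both sides. Path continuity of $X^x$ yields $X^x_{\tau_n+\cdot}\to X^x_{\tau+\cdot}$ and $X^x_{\tau_n}\to X^x_{\tau}$ q.s.; together with continuity of $Y$ and of $y\mapsto\hat{\mathbb{E}}[Y(X^y_\cdot)]$, and with the uniform bound $\|Y\|_\infty$ plus (H), this upgrades to $L^1$-convergence on both sides. Since $X^x_\tau$ is $\mathcal{F}_{\tau+}$-measurable, the limit $\hat{\mathbb{E}}[Y(X^y_\cdot)]_{y=X^x_\tau}$ is already $\mathcal{F}_{\tau+}$-measurable, which identifies it with $\hat{\mathbb{E}}_{\tau+}[Y(X^x_{\tau+\cdot})]$ and simultaneously places $Y(X^x_{\tau+\cdot})$ in $L^{1,\tau+}_C(\Omega_d)$ as the $\|\cdot\|_1$-limit of elements of $L^{0,\tau+}_C(\Omega_d)$. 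When $\tau$ is a stopping time, the same discretization gives $A_{n,k}\in\mathcal{F}_\tau$ and the argument goes through with $\mathcal{F}_\tau$ in place of $\mathcal{F}_{\tau+}$.

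For the general lower semi-continuous $Y$ bounded from below I would use the infimal-convolution approximation $Y_m(\omega):=\inf_{\omega'}\bigl[Y(\omega')+m\,\rho_n(\omega,\omega')\bigr]\wedge m$, which gives $Y_m\in C_b(\Omega_n)$ with $Y_m\uparrow Y$. Applying the $C_b$ case to each $Y_m$ and passing to the monotone limit on both sides via Proposition \ref{downward convergence proposition}(1) yields the general identity, while $Y(X^x_{\tau+\cdot})\in L^{1,\tau+,*}_C(\Omega_d)$ is immediate from the definition of $L^{1,\tau+,*}_C$ as the monotone closure of $L^{1,\tau+}_C$. The main obstacle I anticipate is the discrete step: certifying that the series representation of $Y(X^x_{\tau_n+\cdot})$ genuinely lies in $L^{1,\tau+}_C$ under norm convergence. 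The remark following Lemma \ref{stop integral lemma of LP} warns that optional-time cut-offs in general destroy quasi-continuity, so the argument must lean crucially on the $\mathcal{F}_{\tau+}$-measurability of the level sets $A_{n,k}$, on the continuity of $Y$ and of $y\mapsto \hat{\mathbb{E}}[Y(X^y_\cdot)]$, and on hypothesis (H) to dominate the tail uniformly in $n$.
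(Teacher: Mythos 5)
Your route --- dyadic discretization $\tau_n\downarrow\tau$ with level sets $A_{n,k}$, the Markov property at deterministic times applied on each $A_{n,k}$, a limit passage in $n$, and then an increasing inf-convolution approximation $Y_m\uparrow Y$ for the lower semi-continuous case --- is essentially the route the paper itself takes, since the paper's proof consists of invoking Lemma 4.1, Theorems 4.2, 4.4 and Corollary 4.8 of \cite{HJL} ``line by line'', and those results are proved by precisely this discretization scheme (with the conditional expectations at $\tau_n$, their local and tower properties, taken from \cite{HJL,HP1}). The monotone step for l.s.c.\ $Y$ via Proposition \ref{downward convergence proposition}(1) and the definition of $L^{1,\tau+,*}_C$ is fine. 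The problem is that the two analytic steps you dispatch in one line each are exactly where the work lies, and as written one of them is a genuine gap.

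The passage $n\to\infty$ is not ``a dominated-convergence step'': under a sublinear expectation there is no dominated convergence theorem, and a uniformly bounded sequence converging q.s.\ need not converge in $\hat{\mathbb{E}}$-norm (e.g.\ $\psi_k(\langle B\rangle_1)$ with a travelling bump, $\psi_k\to 0$ pointwise while $\hat{\mathbb{E}}[\psi_k(\langle B\rangle_1)]=1$). To turn $X^x_{\tau_n+\cdot}\to X^x_{\tau+\cdot}$ into $\hat{\mathbb{E}}\bigl[|Y(X^x_{\tau_n+\cdot})-Y(X^x_{\tau+\cdot})|\bigr]\to 0$ you must establish convergence in capacity with quantitative control: use $0\le\tau_n-\tau\le 2^{-n}$, Kolmogorov-type modulus estimates for $X^x$ on $[0,T]$ valid off a set of small capacity, tightness of the induced laws on $\Omega_n$ so that $Y$ may be treated as uniformly continuous on a compact set of paths carrying most of the capacity, and hypothesis (H) for $\{\tau>T\}$ --- this is exactly the mechanism of Lemma \ref{SDE continuity wrt y} and of the $I_1,I_2,I_3$ splitting in the proof of Theorem \ref{viscosity solution theorem}, and it is what \cite{HJL} actually does. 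The same objection applies to your claim that Lipschitz coefficients plus the $L^2$ estimate \eqref{87676535467} give continuity of $y\mapsto\hat{\mathbb{E}}[Y(X^y_\cdot)]$ for general $Y\in C_b(\Omega_n)$: such a $Y$ is not Lipschitz in $\rho_n$, and the correct argument is again the tightness/Dini-type one of Lemma \ref{SDE continuity wrt y} (whose proof does not use the present theorem, so it can be cited without circularity); likewise the deterministic-time identity $\hat{\mathbb{E}}_t[Y(X^x_{t+\cdot})]=\hat{\mathbb{E}}[Y(X^y_\cdot)]_{y=X^x_t}$ and the membership $Y(X^x_{t+\cdot})\in L^1_C(\Omega_d)$ for path-dependent $Y$ already require a cylinder-function approximation of this kind, not just the flow property. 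A small slip besides: $\mathcal{F}_s\subset\mathcal{F}_{\tau+}$ is false; what is true (and sufficient) is that $\{\tau<s\}$ and $\{\tau\ge s\}$ belong to $\mathcal{F}_{\tau+}$, whence $A_{n,k}\in\mathcal{F}_{\tau+}$ (and $A_{n,k}\in\mathcal{F}_{\tau}$ when $\tau$ is a stopping time).
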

\begin{proof}
The proof is similar to Lemma 4.1, Theorem 4.2, 4.4   and Corollary 4.8 in \cite{HJL} line by line.
\end{proof}

\section{Exit times for $G$-SDEs}
In this section, we shall give a detailed study of the exit times for $G$-SDEs (\ref{SDE}) from a bounded open set.
For symbol simplicity, we only consider the case where $h_{ij}\equiv 0$ and the results  still hold for the general case.

From now on we always assume that $G$ satisfies the uniformly elliptic condition, i.e.,
there exists three constants $0<\underline{\sigma}^2\leq \overline{\sigma}^2<\infty$ and $\beta\geq 0$ such that, for each $ A_1\geq A_2\in\mathbb{S}(d)$ and $p_1,p_2\in \mathbb{R}^d,$
\begin{equation}\label{uniformly elliptic condition}
\frac{\underline{\sigma}^2}{2} tr(A_1-A_2)-\beta|p_1-p_2|\leq G(A_1,p_1)-G(A_2,p_2)\leq\frac{\overline{\sigma}^2}{2} tr(A_1-A_2)+\beta|p_1-p_2|,
\end{equation}
In fact we can depict the uniform ellipticity of $G$ by $\Theta$: $G$ is uniformly elliptic with parameters $(\underline{\sigma}^2,\overline{\sigma}^2,\beta)$ iff $$\underline{\sigma}^2I_{d\times d}\leq \gamma\gamma^T\leq\overline{\sigma}^2I_{d\times d}\ \text{and} \ |\mu|\leq \beta\ \text{for each}\ (\gamma,\mu)\in\Theta.$$
Then it holds that $$g(p)\leq \beta|p|,\  \frac{\underline{\sigma}^2}{2} tr(A)\leq G_1(A)  \leq\frac{\overline{\sigma}^2}{2} tr(A)\ \text{for}\ A\geq 0\ \text{and}\ \underline{\sigma}^2I_{d\times d}\leq \frac{d\langle B\rangle_t}{dt}\leq\overline{\sigma}^2I_{d\times d},\ \text{q.s.}$$
In the following, we  also assume that $Q$ is a bounded open set in $\mathbb{R}^n$ and $\sigma$ is non-degenerate, i.e., there exists a constant $\lambda>0$ such that
\[
 \lambda I_{n\times n}\leq \sigma(y)\sigma(y)^T, \ \text{for all} \ y\in \overline{Q}.
\]
We will always use $C_f$ ($L_f$, resp.) to denote the bound (the Lipschitz constant, resp.) of a  function $f$ on $\overline{Q}$.
Then we get  that
\begin{equation}\label{myq2}
\lambda I_{n\times n}\leq \sigma(y)\sigma(y)^T\leq C_\sigma^2 I_{n\times n} \ \ \ \ \text{for all} \ y\in \overline{Q}.
\end{equation}

For  each set $D\subset \mathbb{R}^n$ and for any $x\in\mathbb{R}^n$, we define the exit times of $X^x$ by
$$
{\tau}_D^x(\omega):=\inf\{t\geq 0:X^x_t(\omega)\in D^c\},\ \text{for}\ \omega\in \Omega_d.
$$
Now we shall study the properties of ${\tau}_{\overline{Q}}^x$ and ${\tau}_Q^x$.
\begin{lemma}\label{stopping time lemma}
	There exists a constant $C>0$ depending only on $\underline{\sigma}^2,\lambda,\beta,C_b,C_\sigma$ and the diameter of ${Q}$
such that for all $x\in \overline{Q}$,
	\begin{equation}
	\hat{\mathbb{E}}[{{\tau}_{\overline{Q}}^x}]\leq C.
	\end{equation}
\end{lemma}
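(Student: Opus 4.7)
The plan is to adapt the classical exit-time estimate via a suitable exponential test function, arguing $P$ by $P$ through the semimartingale decomposition so as to avoid any delicate optional sampling for $G$-martingales. After translating coordinates we may assume $\overline{Q} \subset \{y\in\mathbb{R}^n : 0 \leq y_1 \leq R\}$, where $R = \mathrm{diam}(Q)$. Let $f\in C^2(\mathbb{R}^n)$ agree with $y\mapsto -Ae^{\alpha y_1}$ on $\overline{Q}$ for constants $A,\alpha>0$ to be fixed, and be extended smoothly with compact support outside (so that the polynomial-growth hypothesis of Theorem \ref{Ito formula} is met). Introduce the generator
\[\mathcal{L}f(y) := \langle Df(y),b(y)\rangle + 2G\bigl(\tfrac{1}{2}\sigma(y)^T D^2f(y)\sigma(y),\,\tfrac{1}{2}\sigma(y)^T Df(y)\bigr).\]
The goal is to arrange $\mathcal{L}f\leq -1$ on $\overline{Q}$ while keeping $|f|$ controlled there.

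For the chosen $f$ one has $Df = -A\alpha e^{\alpha y_1}e_1$ and $D^2f = -A\alpha^2 e^{\alpha y_1}e_1e_1^T$. Writing $v := \sigma(y)^T e_1$, the non-degeneracy \eqref{myq2} gives $\lambda \leq |v|^2 \leq C_\sigma^2$. The uniform ellipticity \eqref{uniformly elliptic condition}, applied by comparing with $G(0,0)=0$, yields the inequality $G(-M,q)\leq -\tfrac{\underline{\sigma}^2}{2}\mathrm{tr}(M)+\beta|q|$ for $M\geq 0$. Using sub-linearity to factor out $A\alpha e^{\alpha y_1}$ and applying this with $M = \tfrac{\alpha}{2}vv^T$, $q = -\tfrac{v}{2}$, a direct computation gives
\[\mathcal{L}f(y) \leq A\alpha e^{\alpha y_1}\Bigl[C_b + \beta C_\sigma - \tfrac{\underline{\sigma}^2\alpha\lambda}{2}\Bigr] \quad \text{on } \overline{Q}.\]
Choosing $\alpha := 2(C_b+\beta C_\sigma + 1)/(\underline{\sigma}^2\lambda)$ and $A := 1/\alpha$ gives $\mathcal{L}f\leq -A\alpha e^{\alpha y_1}\leq -1$ on $\overline{Q}$, while $|f|\leq A e^{\alpha R}$ there.

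To convert this into a bound on $\hat{\mathbb{E}}[\tau^x_{\overline{Q}}]$, fix $P\in\mathcal{P}$. Theorem \ref{generalized G-BM decomposition} yields $B = M^P + A^P$ under $P$ with $|\dot A^P_s|\leq \beta$ and $\dot{\langle M^P\rangle}^P_s\in\Gamma\Gamma^T$. Writing the classical $P$-It\^o formula for $f(X^x_t)$ and stopping at $t\wedge\tau^x_{\overline{Q}}$ via Lemma \ref{stop integral lemma of LP}, the finite-variation part is pointwise dominated by $\mathcal{L}f(X^x_s)\leq -1$ (because $X^x_s\in\overline{Q}$ on $[0,\tau^x_{\overline{Q}}]$), while the stochastic integral against $M^P$ is a genuine $P$-martingale of mean zero (bounded integrand on a bounded time interval). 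Consequently
\[E_P\bigl[t\wedge \tau^x_{\overline{Q}}\bigr] \leq f(x) - E_P\bigl[f(X^x_{t\wedge\tau^x_{\overline{Q}}})\bigr] \leq 2A e^{\alpha R}.\]
Taking the supremum over $P\in\mathcal{P}$ (Lemma \ref{integration transformation}) and then letting $t\to\infty$ using the monotone convergence in Proposition \ref{downward convergence proposition}(1) (since $t\wedge\tau^x_{\overline{Q}}\uparrow \tau^x_{\overline{Q}}$ q.s.) gives $\hat{\mathbb{E}}[\tau^x_{\overline{Q}}]\leq 2Ae^{\alpha R}$, which depends only on $\underline{\sigma}^2,\lambda,\beta,C_b,C_\sigma$ and $R$.

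The main obstacle is keeping the stopping compatible with the sub-linear structure: the $P$-by-$P$ route via Theorem \ref{generalized G-BM decomposition} is tailor-made for this and cleanly bypasses any need for an optional-sampling theorem for $G$-martingales. A secondary technicality is that the exponential test function does not satisfy the polynomial-growth hypothesis of Theorem \ref{Ito formula} globally, which is resolved by the standard smooth cut-off outside $\overline{Q}$; since $X^x_s$ stays in $\overline{Q}$ up to $\tau^x_{\overline{Q}}$, this modification does not affect the argument.
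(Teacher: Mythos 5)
Your proof is correct and rests on the same exponential barrier $y\mapsto Ae^{\alpha y_1}$ (you use its negative), with the same choice of $\alpha$ dictated by $\underline{\sigma}^2\lambda$, $C_b$ and $\beta C_\sigma$; the only real difference from the paper is how the sublinear expectation is handled. The paper stays at the level of $\hat{\mathbb{E}}$: it applies the $G$-It\^{o} formula (Theorem \ref{Ito formula}), bounds the $d\langle B\rangle$-term via $\underline{\sigma}^2 I_{d\times d}\leq d\langle B\rangle_t/dt$ q.s., and absorbs the $dB$-integral through Proposition \ref{Bcontrol}, whereas you argue $P$-by-$P$ through the semimartingale decomposition of Theorem \ref{generalized G-BM decomposition} and take the supremum over $P\in\mathcal{P}$ only at the end. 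The two routes are equivalent in substance (Proposition \ref{Bcontrol} is itself proved by exactly this $P$-wise decomposition), so your version needs no stochastic-integral estimate under $\hat{\mathbb{E}}$ and no optional sampling for $G$-martingales, at the price of redoing the decomposition argument inline; the paper's version is shorter because those estimates are already packaged.

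One imprecision to correct: Theorem \ref{generalized G-BM decomposition} only gives the marginal memberships $dA^P_t/dt\in\Sigma$ and $d\langle M^P\rangle^P_t/dt\in\Gamma\Gamma^T$; the pair need not lie in $\Theta$, so the finite-variation density of $f(X^x_\cdot)$ under $P$ is not literally dominated by $\mathcal{L}f(X^x_s)=\langle Df,b\rangle+G(\sigma^TD^2f\sigma,\sigma^TDf)$, but only by $\langle Df,b\rangle+G_1(\sigma^TD^2f\sigma)+g(\sigma^TDf)$, which may exceed $\mathcal{L}f$. This is harmless for your argument: your estimate of $\mathcal{L}f$ used only $G(-M,q)\leq-\tfrac{\underline{\sigma}^2}{2}\mathrm{tr}(M)+\beta|q|$ for $M\geq 0$, and the identical bound holds for $G_1(-M)+g(q)$, so the same choice of $A,\alpha$ yields the drift bound $\leq-1$ on $\overline{Q}$ and the rest of the proof (mean-zero stopped martingale under $P$, the uniform bound $2Ae^{\alpha R}$, supremum over $P$, monotone convergence as $t\to\infty$) goes through unchanged.
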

\begin{proof}
	Without loss of generality, we can assume $0\in Q$. Let $h(y):=Ae^{\alpha y_1}$ on $\overline{Q}$  and take $A,\alpha\geq 0$ large enough such that $\frac{h}{2}(\underline{\sigma}^2\lambda\alpha^2-2\alpha C_b-2\beta\alpha C_\sigma)\geq 1$ on $\overline{Q}$. By  It\^{o}'s formula (extending $h$  to $\mathbb{R}^n$ smoothly if necessary), we have
	\begin{align*}
	h(X^x_{{{\tau}_{\overline{Q}}^x}\wedge t})-h(x)=&\int_0^{{{\tau}_{\overline{Q}}^x}\wedge t}\alpha h( X^{x}_s)\langle \sigma^1(X^x_s),dB_s\rangle+\int_0^{{{\tau}_{\overline{Q}}^x}\wedge t}\alpha h( X^{x}_s)b^1(X^x_s)ds\\
	&+\frac12\int_0^{{{\tau}_{\overline{Q}}^x}\wedge t}\alpha^2h( X^{x}_s)\langle\sigma^1(X^x_s)^T\sigma^1(X^x_s),d\langle B\rangle_s\rangle\\
	\geq & \int_0^{{{\tau}_{\overline{Q}}^x}\wedge t}\alpha h(X^x_s)\langle\sigma^1( X^{x}_s),dB_s\rangle-\int_0^{{{\tau}_{\overline{Q}}^x}\wedge t}\alpha C_bh( X^{x}_s)ds+
\frac12\int_0^{{{\tau}_{\overline{Q}}^x}\wedge t}\alpha^2\underline{\sigma}^2\lambda h( X^{x}_s)ds,
	\end{align*}
	where $\sigma^1,b^1$ is the first row of $\sigma,b$, respectively, and we have used the matrix inequality $\langle A,D\rangle\geq \langle B,D\rangle $ for $A\geq B, D\geq 0$ in the last inequality.
	With the help of Proposition \ref{Bcontrol}, taking expectation to both sides gives that
	\begin{align*}
	2C_h\geq  \hat{\mathbb{E}}[\int_0^{{{\tau}_{\overline{Q}}^x}\wedge t}\frac{h( X^{x}_s)}{2}(\underline{\sigma}^2\lambda\alpha^2-2\alpha C_b-2\beta\alpha C_\sigma)ds]
	\geq \hat{\mathbb{E}}[{{\tau}_{\overline{Q}}^x}\wedge t].
	\end{align*}
	Letting $t\rightarrow\infty$, we get the desired result.
\end{proof}

\begin{lemma}\label{square stopping time lemma}
	There exists a constant $C>0$ depending only on $\underline{\sigma}^2,\lambda,\beta,C_b,C_\sigma$ and  the diameter of ${Q}$  such that for all $x\in \overline{Q}$,
	\begin{equation}
	\hat{\mathbb{E}}[({{\tau}_{\overline{Q}}^x})^2]\leq C.
	\end{equation}
\end{lemma}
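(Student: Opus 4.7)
My plan is to mirror the proof of Lemma~\ref{stopping time lemma} using the same exponential barrier $h(y)=A e^{\alpha y_1}$ with $A,\alpha$ chosen large enough that $\tfrac{h}{2}(\underline{\sigma}^2\lambda\alpha^2-2\alpha C_b-2\alpha\beta C_\sigma)\geq 1$ on $\overline{Q}$, but now applied to the \emph{time-weighted} functional $u\cdot h(X^x_u)$. The starting point is the product-rule identity
$$
(\tau_{\overline{Q}}^x\wedge t)\,h(X^x_{\tau_{\overline{Q}}^x\wedge t})
=\int_0^{\tau_{\overline{Q}}^x\wedge t} h(X^x_u)\,du+\int_0^{\tau_{\overline{Q}}^x\wedge t} u\,dh(X^x_u),
$$
which holds q.s.\ by the It\^o formula (Theorem~\ref{Ito formula}) applied to $h$. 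Expanding $dh(X^x_u)=\alpha h\langle\sigma^1,dB_u\rangle+\alpha h b^1\,du+\tfrac12\alpha^2 h\langle(\sigma^1)^T\sigma^1,d\langle B\rangle_u\rangle$, I can then repeat the q.s.\ bounds used in Lemma~\ref{stopping time lemma}: $b^1\geq -C_b$ pointwise, the uniform ellipticity yields $\langle(\sigma^1)^T\sigma^1,d\langle B\rangle_u\rangle\geq \underline{\sigma}^2\lambda\,du$ q.s., and Proposition~\ref{Bcontrol} converts the $dB$-integral to a drift at the cost of $\beta\int_0^{\tau_{\overline{Q}}^x\wedge t} u\alpha h|\sigma^1|\,du\leq \beta\alpha C_\sigma\int_0^{\tau_{\overline{Q}}^x\wedge t} u h\,du$.

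Combining all three estimates under a single sublinear expectation (this is the key point where I have to be careful not to distribute $\hat{\mathbb{E}}$ across the sum; only pointwise q.s.\ inequalities plus one application of Proposition~\ref{Bcontrol} are used) gives
$$
\hat{\mathbb{E}}\bigl[(\tau_{\overline{Q}}^x\wedge t)\,h(X^x_{\tau_{\overline{Q}}^x\wedge t})\bigr]
\;\geq\;\hat{\mathbb{E}}\Bigl[\int_0^{\tau_{\overline{Q}}^x\wedge t} u\,h\Bigl(\tfrac{\alpha^2\underline{\sigma}^2\lambda}{2}-\alpha C_b-\alpha\beta C_\sigma\Bigr) du\Bigr]
\;\geq\;\hat{\mathbb{E}}\Bigl[\tfrac{(\tau_{\overline{Q}}^x\wedge t)^2}{2}\Bigr],
$$
the final step using the calibration of $(A,\alpha)$ to make the coefficient $\geq 1$ on $\overline{Q}$ and discarding the nonnegative $\int h\,du$ term by monotonicity. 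On the other hand, $0\leq h\leq C_h$ on a neighbourhood of $\overline{Q}$ together with Lemma~\ref{stopping time lemma} bound the left-hand side by $C_h\hat{\mathbb{E}}[\tau_{\overline{Q}}^x\wedge t]\leq 2C_h^2$, so $\hat{\mathbb{E}}[(\tau_{\overline{Q}}^x\wedge t)^2]\leq 4C_h^2$.

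To conclude, since $\tau_{\overline{Q}}^x<\infty$ q.s.\ by Lemma~\ref{stopping time lemma}, we have $(\tau_{\overline{Q}}^x\wedge t)^2\uparrow(\tau_{\overline{Q}}^x)^2$ q.s., and the upward monotone convergence in Proposition~\ref{downward convergence proposition}(1) lifts the bound to $\hat{\mathbb{E}}[(\tau_{\overline{Q}}^x)^2]\leq 4C_h^2$, with $C_h$ depending only on the stated parameters and the diameter of $Q$. The main obstacle I anticipate is purely bookkeeping: the failure of additivity for $\hat{\mathbb{E}}$ forces all q.s.\ substitutions and the drift-swap of Proposition~\ref{Bcontrol} to take place inside a single $\hat{\mathbb{E}}$, so the estimates must be chained in the right order rather than summed term-by-term.
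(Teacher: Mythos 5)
Your proposal is correct and follows essentially the same route as the paper: apply It\^o's formula to the time-weighted functional $t\,h(X^x_t)$ with the same exponential barrier $h$ and calibration of $(A,\alpha)$ from Lemma \ref{stopping time lemma}, chain the pointwise q.s.\ bounds and one application of Proposition \ref{Bcontrol} inside a single $\hat{\mathbb{E}}$ to get $\tfrac12\hat{\mathbb{E}}[(\tau^x_{\overline{Q}}\wedge t)^2]\leq C_h\hat{\mathbb{E}}[\tau^x_{\overline{Q}}\wedge t]$, and then let $t\to\infty$ using Lemma \ref{stopping time lemma} and monotone convergence. This matches the paper's argument, with only the bookkeeping of the final limit spelled out slightly more explicitly.
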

\begin{proof}
	Without loss of generality, we can assume $0\in Q$. Consider $th(y)$, where $h$ with  $A,\alpha$  is  assumed as in the proof of Lemma \ref{stopping time lemma}. By It\^{o}'s formula, we have
	\begin{align*}
	({{\tau}_{\overline{Q}}^x}\wedge t)h(X^x_{{{\tau}_{\overline{Q}}^x}\wedge t})  =&\int_0^{{{\tau}_{\overline{Q}}^x}\wedge t}h( X^{x}_s)ds+\int_0^{{{\tau}_{\overline{Q}}^x}\wedge t}s\alpha h( X^{x}_s)\langle\sigma^1(X^x_s),dB_s\rangle+\int_0^{{{\tau}_{\overline{Q}}^x}\wedge t}s\alpha h( X^{x}_s)b^1(X^x_s)ds\\
	&+\frac12\int_0^{{{\tau}_{\overline{Q}}^x}\wedge t}s\alpha^2h( X^{x}_s)\langle\sigma^1(X^x_s)^T\sigma^1(X^x_s),d\langle B\rangle_s\rangle\\
	\geq &\int_0^{{{\tau}_{\overline{Q}}^x}\wedge t}s\alpha h( X^{x}_s)\langle\sigma^1(X^x_s),dB_s\rangle-\int_0^{{{\tau}_{\overline{Q}}^x}\wedge t}s\alpha C_b h( X^{x}_s)ds+\frac12\int_0^{{{\tau}_{\overline{Q}}^x}\wedge t}s\alpha^2 \underline{\sigma}^2\lambda h( X^{x}_s)ds.
	\end{align*}
	Taking expectation on both sides, we get that
	\begin{align*}
	C_h\hat{\mathbb{E}}[{{\tau}_{\overline{Q}}^x}\wedge t]\geq \hat{\mathbb{E}}[({{\tau}_{\overline{Q}}^x}\wedge t)h(X^x_{{{\tau}_{\overline{Q}}^x}\wedge t})]\geq \hat{\mathbb{E}}[\int_0^{{{\tau}_{\overline{Q}}^x}\wedge t}sds]=\frac{1}{2}\hat{\mathbb{E}}[({{{\tau}_{\overline{Q}}^x}\wedge t})^2].
	\end{align*}
	Letting $t\rightarrow \infty$  and we obtain that
	$$
	\hat{\mathbb{E}}[({{{\tau}_{\overline{Q}}^x}})^2]\leq 2C_h\hat{\mathbb{E}}[{{\tau}_{\overline{Q}}^x}],
	$$
which together with Lemma \ref{stopping time lemma} imply the desired result.
\end{proof}
\begin{remark}\upshape{By Theorem \ref{LG characteriazation theorem}, we know that
${\tau}^{x}_{\overline{Q}} \in L_b^1(\Omega_d)$ for any $x\in\mathbb{R}^n$, since the case that  $x\in \overline{Q}^c$  is trivial.
}
\end{remark}

In order to state the main result, we need the following additional condition on the bounded open set $Q$.
 \begin{itemize}
\item[$\cdot$] An open set $O$ is said to satisfy the exterior ball condition if for all $x\in \partial O$,  there exists an open ball $U(z,r)$ such that
$U(z,r)\subset O^c$ and $x\in \partial U(z,r)$.
\end{itemize}
In the rest of this paper, we always assume that $Q$ satisfies the exterior ball condition.
The following tells us that $G$-SDEs originating  at the boundary point with exterior ball will exit $\overline{Q}$ immediately.
\begin{lemma}\label{stopping time lemma 2}
For each $x\in \partial Q$, we have q.s. $\tau^x_{\overline{Q}}=0$, i.e.,
for each $\varepsilon>0$, there exists a point $t\in(0,\varepsilon]$ such that $X^x_{t}\in \overline{Q}^c$.
\end{lemma}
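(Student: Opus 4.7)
The plan is to build an exterior-ball barrier at $x$ and to reduce the $\mathcal{P}$-quasi-sure conclusion to a pointwise $P$-a.s.\ statement via the semimartingale decomposition of $B$ under each $P\in\mathcal{P}$ supplied by Theorem~\ref{generalized G-BM decomposition}. By the exterior ball condition, fix $U(z,r)\subset Q^c$ with $x\in\partial U(z,r)$. Since $Q$ is open, any $y\in U(z,r)\cap\overline{Q}$ would have a neighborhood in $U(z,r)\subset Q^c$ disjoint from $Q$, contradicting $y\in\overline{Q}$; hence $U(z,r)\cap\overline{Q}=\emptyset$ and $|y-z|\ge r$ for every $y\in\overline{Q}$. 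For a parameter $\alpha>0$ to be chosen, introduce the barrier
\[
\psi(y):=e^{-\alpha r^2}-e^{-\alpha|y-z|^2},
\]
which is smooth, satisfies $\psi(x)=0$, and is nonnegative on $\overline{Q}$.

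Fix any $P\in\mathcal{P}$. By Theorem~\ref{generalized G-BM decomposition}, $B=M^P+A^P$ under $P$ with $dA^P/ds\in\Sigma$ and $d\langle M\rangle^P/ds\in\Gamma\Gamma^T$ a.e., so $X^x$ is a continuous $P$-semimartingale. Plugging into the $G$-It\^o formula (Theorem~\ref{Ito formula}) and collecting drift terms under $P$ yields, with $N^P$ a $P$-local martingale,
\[
\psi(X^x_t)=\int_0^t\mathcal{L}^P\psi(X^x_s)\,ds+N^P_t,
\]
where
\[
\mathcal{L}^P\psi(y):=\langle\nabla\psi(y),\,b(y)+\sigma(y)\tfrac{dA^P}{ds}\rangle+\tfrac{1}{2}\text{tr}\bigl(\sigma(y)\tfrac{d\langle M\rangle^P}{ds}\sigma(y)^T D^2\psi(y)\bigr).
\]
Using $D^2\psi=2\alpha e^{-\alpha|y-z|^2}(I-2\alpha(y-z)(y-z)^T)$, $\sigma\sigma^T\ge\lambda I$, the bounds $d\langle M\rangle^P/ds\ge\underline{\sigma}^2 I$, $|dA^P/ds|\le\beta$, and $|b|\le C_b$, $|\sigma|\le C_\sigma$ on $\overline{Q}$, one checks that for any fixed $R>r$, on $N_R:=\overline{Q}\cap\{|y-z|\le R\}$,
\[
\mathcal{L}^P\psi(y)\le\alpha e^{-\alpha|y-z|^2}\bigl(C_1-2\alpha\underline{\sigma}^2\lambda r^2\bigr),
\]
where $C_1$ depends only on $\underline{\sigma}^2,\overline{\sigma}^2,\lambda,\beta,C_b,C_\sigma,n,R$ and is independent of $P$. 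Choosing $\alpha$ large enough that the bracket is $\le-1$ gives $\mathcal{L}^P\psi\le-c$ on $N_R$ with $c:=\alpha e^{-\alpha R^2}>0$ independent of $P$.

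Let $\tau_R:=\inf\{t\ge 0:|X^x_t-z|>R\}$; by path continuity, $\tau_R>0$ $P$-a.s., and $N^P$ stopped at $\tau^x_{\overline{Q}}\wedge\tau_R$ is a true $P$-martingale because its integrand is bounded on this stochastic interval. Taking $E_P$ at the bounded stopping time $\tau^x_{\overline{Q}}\wedge\tau_R\wedge t$ and using $\psi(x)=0$ together with $\psi\ge 0$ on $\overline{Q}$ gives
\[
0\le E_P[\psi(X^x_{\tau^x_{\overline{Q}}\wedge\tau_R\wedge t})]\le -c\,E_P[\tau^x_{\overline{Q}}\wedge\tau_R\wedge t].
\]
Hence $\tau^x_{\overline{Q}}\wedge\tau_R\wedge t=0$ $P$-a.s.; letting $t\to\infty$ and using $\tau_R>0$ $P$-a.s.\ yields $\tau^x_{\overline{Q}}=0$ $P$-a.s. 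Since $P\in\mathcal{P}$ was arbitrary, $c(\{\tau^x_{\overline{Q}}>0\})=\sup_{P\in\mathcal{P}}P(\tau^x_{\overline{Q}}>0)=0$, which is the claim.

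The main obstacle is the $P$-uniform generator estimate in the second paragraph: the $\alpha^2$ contribution of $D^2\psi$ must dominate every other term (the $b$-drift, the $A^P$-drift, and the trace from $\sigma\frac{d\langle M\rangle^P}{ds}\sigma^T$) simultaneously for all $P\in\mathcal{P}$, and this is precisely what the universal bounds from Theorem~\ref{generalized G-BM decomposition}, combined with the non-degeneracy of $\sigma$, deliver once $\alpha$ is taken large enough. After this, the passage from $P$-a.s.\ to $\mathcal{P}$-q.s.\ is immediate from $c(\cdot)=\sup_{P\in\mathcal{P}}P(\cdot)$.
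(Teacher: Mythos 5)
Your proof is correct and takes essentially the same route as the paper: the same Lions--Menaldi exterior-ball exponential barrier (your $\psi$ is just $h(x)-h(y)$ for the paper's $h(y)=e^{-k|y-z|^2}$), the same It\^{o}-formula drift estimate on $\overline{Q}$ made uniform by the ellipticity bounds, and the same conclusion that $\tau^x_{\overline{Q}}\wedge t$ has zero (upper) expectation. The only difference is bookkeeping: you disintegrate under each fixed $P\in\mathcal{P}$ via Theorem \ref{generalized G-BM decomposition} and classical optional stopping and then take $\sup_{P}$, whereas the paper works directly under $\hat{\mathbb{E}}$ using Proposition \ref{Bcontrol}, which is itself proved by that same per-$P$ decomposition.
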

\begin{proof}
	Assume $U(z,r)$  is the exterior ball of $Q$ at $x$. We are going to prove the conclusion by a technique from  Lions and   Menaldi \cite{LM}.
We set $h(y):=e^{-k|y-z|^2}$, where the constant $k$ will be  determined in the sequel.
Then we have
\begin{align*}
&D_yh(y)=-2k(y-z)e^{-k|y-z|^2},\\
&D^2_{yy}h(y)=(4k^2(y_i-z_i)(y_j-z_j)-2k\delta_{ij})e^{-k|y-z|^2}
=(4k^2(y_i-z_i)(y_j-z_j))e^{-k|y-z|^2}-(2k\delta_{ij})e^{-k|y-z|^2}.
\end{align*}

Note that the matrix $(4k^2(y_i-z_i)(y_j-z_j))e^{-k|y-z|^2}=4k^2(y-z)(y-z)^Te^{-k|y-z|^2}$ and $(2k\delta_{ij})e^{-k|y-z|^2}$ are nonnegative.
Choosing $k$ large enough, we can find some  constant $\mu>0$ so that for all $y\in \overline{Q}$,
\begin{align*}
&\langle\sigma(y)^TD^2_{yy}h(y)\sigma(y),\frac{d\langle B\rangle_t}{dt}\rangle+2\langle D_{y}h(y),b(y)\rangle-2\beta|\sigma^T(y)||D_{y}h(y)| \\
& =\langle\sigma(y)^T(4k^2(y_i-z_i)(y_j-z_j))\sigma(y),\frac{d\langle B\rangle_t}{dt}\rangle e^{-k|y-z|^2}-\langle\sigma(y)^T(2k\delta_{ij})\sigma(y),\frac{d\langle B\rangle_t}{dt}\rangle e^{-k|y-z|^2}\\
&\ \ \ \ -4k\langle(y-z),b(y) \rangle e^{-k|y-z|^2}-2\beta|\sigma^T(y)||D_{y}h(y)|\\
& \geq (4\underline{\sigma}^2\lambda k^2|y-z|^2-4k(C_{b}+\beta C_{\sigma} )|y-z|-2k\overline{\sigma}^2C^2_{\sigma} )e^{-k|y-z|^2}\geq \mu.
\end{align*}
Then applying It\^{o}'s formula, we derive that
\begin{align*}
h(X^x_{\tau^x_{\overline{Q}}\wedge t})-h(x)
&=\int_{0}^{\tau^x_{\overline{Q}}\wedge t} \langle D_yh(X^x_s),\sigma(X^x_s)dB_s\rangle+\int_{0}^{\tau^x_{\overline{Q}}\wedge t} \langle D_yh(X^x_s),b(X^x_s)\rangle ds\\
&\ \ \ +\frac12\int_{0}^{\tau^x_{\overline{Q}}\wedge t}\langle\sigma(X^x_s)^TD_{yy}^2h(X^x_s)\sigma(X^x_s),d\langle B\rangle_s\rangle.
\end{align*}
Taking expectation to both sides and using Proposition \ref{Bcontrol}, we conclude that
$$\frac{\mu}{2}\hat{\mathbb{E}}[{\tau^x_{\overline{Q}}\wedge t}]\leq \hat{\mathbb{E}}[h(X_{\tau^x_{\overline{Q}}\wedge t})-h(x)]\leq 0,$$
since $h(y)-h(x)\leq 0$ for   $y\in U(z,r)^c$.
Therefore, it holds that
$$\hat{\mathbb{E}}[{\tau^x_{\overline{Q}}}\wedge t]\leq 0.$$ Letting $t\rightarrow \infty$, we obtain
$\hat{\mathbb{E}}[\tau^x_{\overline{Q}}]\leq 0$, from which we get that $\tau^x_{\overline{Q}}=0$ q.s.
The proof is complete.
\end{proof}

Lemma \ref{stopping time lemma 2} indicates that ${\tau}_Q^x= {\tau}_{\overline{Q}}^x$ for the boundary points of $Q$.
In the following,  we shall show that it also remains true for inner points of $Q$.
\begin{theorem}\label{exit times equal lemma}For each $x\in \mathbb{R}^n$, we have
	$${\tau}_Q^x= {\tau}_{\overline{Q}}^x, \ \text{q.s.}$$
\end{theorem}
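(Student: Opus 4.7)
The inclusion $\overline{Q}^c\subset Q^c$ gives $\tau_Q^x\leq \tau_{\overline{Q}}^x$ for free, so only the reverse inequality requires work. The cases $x\in \overline{Q}^c$ (both exit times vanish trivially) and $x\in\partial Q$ (Lemma \ref{stopping time lemma 2} gives $\tau_{\overline{Q}}^x=0$) are immediate, so the essential case is $x\in Q$. Write $\tau:=\tau_Q^x$. Since $\tau\leq \tau_{\overline{Q}}^x$ and $\hat{\mathbb{E}}[\tau_{\overline{Q}}^x]<\infty$ by Lemma \ref{stopping time lemma}, Chebyshev's inequality applied to $c_{\mathcal{P}}$ shows that $\tau$ satisfies condition (H), so the strong Markov property of Theorem \ref{extended BM strongmarkov1} is available at $\tau$. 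Continuity of $X^x$, combined with $X^x_t\in Q$ for $t<\tau$ and $X^x_\tau\in Q^c$, forces $X^x_\tau\in\partial Q$ q.s.

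The intuition is that at $\tau$ the $G$-SDE restarts from a $\partial Q$-valued point and, by Lemma \ref{stopping time lemma 2}, should immediately exit $\overline{Q}$. To realize this through Theorem \ref{extended BM strongmarkov1}, whose identity requires (lower semi-)continuous test functionals, I approximate the event of interest by bounded Lipschitz ones. Fix $\delta>0$, set $f_k(y):=(1-k\, d(y,\overline{Q}))^+$, and define $Y_k(\omega):=\min_{t\in[0,\delta]}f_k(\omega(t))$ together with $B:=\{\omega:\omega(t)\in\overline{Q}\ \text{for all}\ t\in[0,\delta]\}$. Each $Y_k$ is bounded by $1$ and Lipschitz on $(\Omega_n,\rho_n)$, so $Y_k\in C_b(\Omega_n)$, and $Y_k\downarrow I_B$ pointwise. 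Put $u_k(y):=\hat{\mathbb{E}}[Y_k(X^y_\cdot)]$. The Lipschitz bound on $Y_k$ together with the standard $L^2$-stability of the $G$-SDE in its initial condition shows $u_k\in C_b(\mathbb{R}^n)$; and for each $y\in\partial Q$, Lemma \ref{stopping time lemma 2} combined with Proposition \ref{downward convergence proposition}(2)(a) yields $u_k(y)\downarrow 0$.

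Now apply Theorem \ref{extended BM strongmarkov1} with the $C_b$-functional $Y_k$ at the stopping time $\tau$ to get $\hat{\mathbb{E}}_{\tau+}[Y_k(X^x_{\tau+\cdot})]=u_k(X^x_\tau)$ q.s. The tower property of the conditional $G$-expectation, together with $X^x_\tau\in\partial Q$ q.s., then gives $\hat{\mathbb{E}}[Y_k(X^x_{\tau+\cdot})]=\hat{\mathbb{E}}[u_k(X^x_\tau)]\leq \sup_{y\in\partial Q}u_k(y)$. Because $\partial Q$ is compact and the continuous functions $u_k$ decrease pointwise to $0$ on $\partial Q$, Dini's theorem upgrades this to uniform convergence, so $\sup_{\partial Q}u_k\to 0$. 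By $I_B\leq Y_k$ and monotonicity of $\hat{\mathbb{E}}$, we obtain $\hat{\mathbb{E}}[I_B(X^x_{\tau+\cdot})]\leq \hat{\mathbb{E}}[Y_k(X^x_{\tau+\cdot})]\to 0$, so $I_B(X^x_{\tau+\cdot})=0$ q.s. That is, q.s.\ some $t\in[0,\delta]$ satisfies $X^x_{\tau+t}\in\overline{Q}^c$, whence $\tau_{\overline{Q}}^x\leq \tau+\delta$ q.s.; letting $\delta\downarrow 0$ finishes the proof. The only substantive obstacle is that Theorem \ref{extended BM strongmarkov1} yields only an upper conditional expectation, from which q.s.\ information on the raw random variable cannot be extracted directly in the sublinear setting; the monotone $C_b$ approximation plus Dini's theorem on the compact $\partial Q$ sidesteps this by producing a deterministic upper bound that vanishes in $k$.
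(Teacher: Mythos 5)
Your proof is correct, and its skeleton is the same as the paper's: reduce to the boundary case via Lemma \ref{stopping time lemma 2}, apply the strong Markov property (Theorem \ref{extended BM strongmarkov1}) at ${\tau}_Q^x$ (whose condition (H) you rightly verify from Lemma \ref{stopping time lemma}), approximate a discontinuous path functional monotonically by $C_b$ ones, and use weak compactness (Proposition \ref{downward convergence proposition}) together with Dini's theorem on the compact set $\partial Q$ to upgrade the pointwise decay supplied by the boundary lemma to a uniform bound. The implementation differs in a modest but useful way: the paper works with the bounded upper semicontinuous functional $\delta_t=({\tau}_{\overline{Q}}^{0,1}-{\tau}_Q^{0,1})\wedge t$ on $\Omega_n$ and proves $\hat{\mathbb{E}}[({\tau}_{\overline{Q}}^{x}-{\tau}_Q^{x})\wedge t]=0$, which requires the semicontinuity of the exit times on path space (Lemma \ref{semi-continuity of exit time}) and the continuity of $y\mapsto\hat{\mathbb{E}}[f_m(X^y_\cdot)]$ for general $C_b$ functionals (Lemma \ref{SDE continuity wrt y}); you instead bound the capacity of the event that the restarted path stays in $\overline{Q}$ throughout $[0,\delta]$, using the explicit Lipschitz functionals $Y_k$, so the continuity of $u_k$ comes directly from the elementary $L^2$-stability of the $G$-SDE in its initial datum, and Lemmas \ref{semi-continuity of exit time} and \ref{SDE continuity wrt y} are bypassed; the conclusion ${\tau}_{\overline{Q}}^x\leq{\tau}_Q^x+\delta$ q.s.\ for every $\delta>0$ then replaces the paper's expectation identity. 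The one point you should make explicit is the membership $Y_k(X^y_\cdot)\in L_C^1(\Omega_d)$ (this is Lemma \ref{SDE belong to G space}, i.e.\ the $\tau\equiv0$ case of Theorem \ref{extended BM strongmarkov1}), since it is a hypothesis of Proposition \ref{downward convergence proposition}(2)(a), which you invoke to obtain $u_k(y)\downarrow 0$ for $y\in\partial Q$; also, strictly speaking $Y_k$ is bounded and continuous on $(\Omega_n,\rho_n)$ rather than Lipschitz for the truncated metric, but continuity is all that is needed there. With these small clarifications the argument is complete.
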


In order to prove Theorem \ref{exit times equal lemma}, we will study the continuity of ${\tau}_Q^x$ in $\omega$. For this purpose,
 we shall consider the image space $\Omega_n$ of $G$-SDE (\ref{SDE}). Denote by $B'$ the canonical process on $\Omega_n$. For each subset $D$ of $\mathbb{R}^n$, we define on $\Omega_n$ the exit  times of $B'$ by
$$
{\tau}_D^{x,1}(\omega):=\inf\{t\geq 0:x+{\omega_t}\in D^c\},\ \text{for}\ \omega\in \Omega_n.
$$
Then we have that ${\tau}_D^{x}(\omega)={\tau}_D^{0,1}(X^x_{\cdot}(\omega))$. We need following lemmas to complete the proof of Theorem \ref{exit times equal lemma}.

\begin{lemma}\label{semi-continuity of exit time}
On $\Omega_n$, ${\tau}_Q^{x,1}$ is lower semi-continuous and ${\tau}_{\overline{Q}}^{x,1}$ is upper semi-continuous.
\end{lemma}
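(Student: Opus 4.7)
The plan is to handle the two semi-continuity statements separately, exploiting the topological asymmetry between the open set $Q$ and its closure $\overline{Q}$: the key is that $Q^{c}$ is closed while $\overline{Q}^{c}$ is open. In both cases I will work on $\Omega_{n}$ with its standard topology of uniform convergence on compacts, and it suffices to check closedness of the appropriate sublevel/superlevel sets.

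For the lower semi-continuity of $\tau_{Q}^{x,1}$ I would show that $\{\omega:\tau_{Q}^{x,1}(\omega)\leq t\}$ is closed for every $t\geq 0$. Take a sequence $\omega^{k}\to\omega$ in $\Omega_{n}$ with $\tau_{Q}^{x,1}(\omega^{k})\leq t$. By definition, for each $k$ there exists $s_{k}\in[0,t]$ such that $x+\omega^{k}_{s_{k}}\in Q^{c}$. Extract a subsequence with $s_{k}\to s\in[0,t]$; by uniform convergence of $\omega^{k}$ to $\omega$ on $[0,t]$ and continuity of $\omega$, one gets $x+\omega^{k}_{s_{k}}\to x+\omega_{s}$. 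Because $Q^{c}$ is closed, $x+\omega_{s}\in Q^{c}$, hence $\tau_{Q}^{x,1}(\omega)\leq s\leq t$. This gives closedness, hence lower semi-continuity.

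For the upper semi-continuity of $\tau_{\overline{Q}}^{x,1}$ I would show that $\{\omega:\tau_{\overline{Q}}^{x,1}(\omega)<t\}$ is open. Fix $\omega$ with $\tau_{\overline{Q}}^{x,1}(\omega)<t$. By definition of the infimum, there is some $s<t$ with $x+\omega_{s}\in\overline{Q}^{c}$. Since $\overline{Q}^{c}$ is open, there exists $\delta>0$ such that the ball $B(x+\omega_{s},\delta)\subset\overline{Q}^{c}$. For any $\omega'$ satisfying $\sup_{r\in[0,t]}|\omega'_{r}-\omega_{r}|<\delta$ one then has $x+\omega'_{s}\in\overline{Q}^{c}$, so $\tau_{\overline{Q}}^{x,1}(\omega')\leq s<t$. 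Since such $\omega'$ form a neighborhood of $\omega$ in the uniform-on-compacts topology, the set is open, and $\tau_{\overline{Q}}^{x,1}$ is upper semi-continuous.

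I do not expect a real obstacle here: both arguments are purely pathwise and purely topological, with no probabilistic input, and they are dual to each other via the open/closed dichotomy. The only subtlety worth flagging is to set up each inequality cleanly so that it is really the right set (sublevel vs. strict sublevel) that is closed or open — in particular, one should not try to prove that $\{\tau_{Q}^{x,1}\geq t\}$ or $\{\tau_{\overline{Q}}^{x,1}\leq t\}$ has good topological properties, because those statements are false in general (exit from an open set can occur strictly later after a small perturbation, and exit from a closed set can occur strictly earlier).
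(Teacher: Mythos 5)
Your proposal is correct and takes essentially the same approach as the paper: a purely pathwise topological argument using the openness of $\overline{Q}^{c}$ for upper semi-continuity and the closedness of $Q^{c}$ (together with continuity of paths and compactness of $[0,t]$) for lower semi-continuity, merely rephrased through sublevel/strict-sublevel sets. The paper only writes out the upper semi-continuity half (declaring the other "similar"), and your argument for that half coincides with its $\varepsilon$-and-open-ball reasoning.
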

\begin{proof}
We just prove that ${\tau}_{\overline{Q}}^{x,1}$ is upper semi-continuous, since the proof of another part is similar.

For any given $\omega\in\Omega_n$, set $t_0:={\tau}_{\overline{Q}}^{x,2}(\omega)$. It suffices to consider the case where $t_0<\infty$.
Then we can find an arbitrarily small $\varepsilon>0$ such
that  $x+B'_{t_0+\varepsilon}(\omega)\in\overline{Q}^c$. Since $\overline{Q}^c$ is open, there exists an open ball $U(x+B'_{t_0+\varepsilon}(\omega),r)$ with center $x+B'_{t_0+\varepsilon}(\omega)$ and radius  $r$ such that $U(x+B'_{t_0+\varepsilon}(\omega),r)\subset \overline{Q}^c$. For each ${\omega}'$ whose distance with $\omega$ is small  enough,  we will have $x+B'_{t_0+\varepsilon}({\omega}')\in U(x+B'_{t_0+\varepsilon}(\omega),r)\subset\overline{Q}^c$.
That is, ${\tau}_{\overline{Q}}^{x,1}({\omega}')\leq t_0+\varepsilon$. This completes the proof.
\end{proof}

By Kolmogorov's criterion, $(X^x_t)_{t\geq 0}$ induces a tight family of probabilities $\mathcal{P}\circ(X_{\cdot}^x)^{-1}:=\{P\circ (X_{\cdot}^x)^{-1}: P\in\mathcal{P}\}$ on $\Omega_n$. We denote the induced upper capacity  by $c^x_2:=c_{\mathcal{P}\circ(X_{\cdot}^x)^{-1}}=\sup_{P\in\mathcal{P}} P\circ (X_{\cdot}^x)^{-1} $ and the induced upper expectation by $\hat{\mathbb{E}}^x_2:=\hat{\mathbb{E}}_{\mathcal{P}\circ(X_{\cdot}^x)^{-1}}=\sup_{P\in\mathcal{P}}E_{P\circ (X_{\cdot}^x)^{-1}}$. More generally, for a set $A\in \mathbb{R}^n$, we define
$\mathcal{P}^A_2:=\cup_{x\in A}\mathcal{P}\circ(X_{\cdot}^x)^{-1}$, and $
\hat{\mathbb{E}}^A_2:=\hat{\mathbb{E}}_{\mathcal{P}^A_2}=\sup_{P\in\mathcal{P}^A_2}E_{P}$ as well as  $
c^A_2:=c_{\mathcal{P}^A_2}=\sup_{P\in\mathcal{P}^A_2}{P}$.

\begin{lemma}\label{SDE continuity wrt y}
Assume $(y_k)_{k\geq 1}$ is a sequence in $\mathbb{R}^n$  such that $|y_k-y|\rightarrow 0$ for some $y$. Then for each $\varphi\in C_b(\Omega_n)$, we have
$$\mathbb{\hat{E}}[|\varphi(X^y_{\cdot})-\varphi(X^{y_k}_{\cdot})|]\rightarrow 0.$$
\end{lemma}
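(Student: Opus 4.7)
My strategy splits the proof into three steps: first an $L^2$-estimate for the flow $y \mapsto X^y_\cdot$ on compact time intervals, then an extension to convergence in the path metric $\rho_n$ on $\Omega_n$, and finally passage through the bounded continuous functional $\varphi$.

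\textbf{Step 1 (SDE continuity in the initial condition).} Writing $\Delta_t := X^y_t - X^{y_k}_t$ and using the Lipschitz property of $b,\sigma$ (recall $h_{ij}\equiv 0$ is the relevant case later, but in general one also controls the $d\langle B^i,B^j\rangle$ term), I would apply the $L^2$-estimate of Proposition \ref{Bcontrol} together with the control $\underline{\sigma}^2 I \le d\langle B\rangle_t/dt \le \overline{\sigma}^2 I$ to obtain, for each $T>0$,
$$
\hat{\mathbb{E}}\Bigl[\sup_{s\in[0,t]}|\Delta_s|^2\Bigr] \le C\Bigl(|y-y_k|^2 + \int_0^t \hat{\mathbb{E}}\bigl[\sup_{r\in[0,s]}|\Delta_r|^2\bigr]\,ds\Bigr),\qquad t\in[0,T].
$$
Gronwall's inequality then yields a constant $C_T$ (independent of $k$) with
$$
\hat{\mathbb{E}}\Bigl[\sup_{s\in[0,T]}|X^y_s - X^{y_k}_s|^2\Bigr] \le C_T|y-y_k|^2 \;\longrightarrow\; 0.
$$

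\textbf{Step 2 (Convergence in the $\rho_n$-metric).} Since $\rho_n(X^y_\cdot,X^{y_k}_\cdot) = \sum_{i=1}^\infty 2^{-i}\bigl(\sup_{t\in[0,i]}|X^y_t-X^{y_k}_t|\wedge 1\bigr)$, truncating the series at $N$ gives
$$
\hat{\mathbb{E}}[\rho_n(X^y_\cdot,X^{y_k}_\cdot)] \le \sum_{i=1}^N 2^{-i}\,\hat{\mathbb{E}}\Bigl[\sup_{t\in[0,i]}|X^y_t-X^{y_k}_t|\Bigr] + 2^{-N}.
$$
Step 1 and Cauchy--Schwarz (for the sublinear $\hat{\mathbb{E}}$) drive the finite sum to $0$ as $k\to\infty$, and then $N\to\infty$ gives $\hat{\mathbb{E}}[\rho_n(X^y_\cdot,X^{y_k}_\cdot)]\to 0$; in particular $c(\{\rho_n(X^y_\cdot,X^{y_k}_\cdot)>\delta\})\to 0$ for every $\delta>0$.

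\textbf{Step 3 (Passage through $\varphi$).} This is the step I expect to be the main obstacle, because $\varphi\in C_b(\Omega_n)$ is only continuous (not uniformly continuous) on the Polish space $\Omega_n$, and the naive dominated convergence in $\hat{\mathbb{E}}$ is delicate. My plan is to localize on a compact set via tightness. The uniform $L^2$-bound from Step 1 (applied with a fixed reference point, say $y$) combined with Kolmogorov's criterion shows that the family $\{\mathcal{P}\circ(X^{y_k})^{-1}\}_k \cup \{\mathcal{P}\circ(X^{y})^{-1}\}$ is tight on $\Omega_n$. Given $\varepsilon>0$, choose a compact $K\subset \Omega_n$ with $\sup_k c(\{X^{y_k}_\cdot\notin K\}) + c(\{X^y_\cdot\notin K\}) < \varepsilon$. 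On the compact set $K$, $\varphi$ is uniformly continuous, with some modulus $\omega_K(\delta)\to 0$ as $\delta\to 0$. Writing $M:=\|\varphi\|_\infty$, we then decompose
$$
|\varphi(X^y_\cdot)-\varphi(X^{y_k}_\cdot)| \le \omega_K(\delta) + 2M\bigl(I_{\{\rho_n(X^y_\cdot,X^{y_k}_\cdot)>\delta\}} + I_{\{X^y_\cdot\notin K\}} + I_{\{X^{y_k}_\cdot\notin K\}}\bigr),
$$
take $\hat{\mathbb{E}}$, use Step 2 and Markov's inequality (via the sublinearity of $\hat{\mathbb{E}}$) to handle the first indicator, and send first $k\to\infty$, then $\delta\to 0$, then $\varepsilon\to 0$, to conclude.
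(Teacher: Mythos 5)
Your argument is correct, but it handles the crux --- $\varphi$ being merely continuous and bounded on $\Omega_n$ --- by a genuinely different device than the paper. The paper never passes through convergence in the path metric: it approximates $\varphi$ from below by bounded functionals $\varphi_m$ that are Lipschitz with respect to $\|\cdot\|_{C[0,m]}$ (Lemma 3.1, Chap.~VI of \cite{P7}), takes a compact $\widetilde{K}\subset\Omega_n$ from the tightness of $\mathcal{P}^{K}_2$, upgrades $\varphi_m\uparrow\varphi$ to uniform convergence on $\widetilde{K}$ by Dini's theorem, and then bounds the middle term $\hat{\mathbb{E}}[|\varphi_m(X^y_\cdot)-\varphi_m(X^{y_k}_\cdot)|]\le mC|y-y_k|$ directly from the basic flow estimate for $G$-SDEs. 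You instead keep $\varphi$ itself, use its uniform continuity on a compact set of paths (obtained from the same kind of tightness), and feed in convergence in capacity of $\rho_n(X^y_\cdot,X^{y_k}_\cdot)$, which your Gronwall estimate plus Cauchy--Schwarz provides; your order of limits ($k\to\infty$, then $\delta\to 0$, then $\varepsilon\to 0$) is handled correctly. So both proofs rest on the same two ingredients --- Lipschitz dependence of the flow on the initial point and tightness of the induced laws on $\Omega_n$ --- and differ only in how $\varphi$ is localized: monotone Lipschitz approximation plus Dini versus modulus of continuity on a compact set; your route is a bit more elementary (no approximation lemma from \cite{P7}, no Dini), at the cost of introducing the metric $\rho_n$ and a Markov-inequality step. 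One slip worth fixing: the tightness of $\{\mathcal{P}\circ(X^{y_k}_\cdot)^{-1}\}_k\cup\{\mathcal{P}\circ(X^{y}_\cdot)^{-1}\}$ does not follow from your Step 1 flow bound; Kolmogorov's tightness criterion needs uniform increment moment estimates (e.g. $\hat{\mathbb{E}}[|X^x_t-X^x_s|^4]\le C_K|t-s|^2$ for initial points in a compact set), which is exactly what the paper invokes when it asserts that $\mathcal{P}^{K}_2$ is tight. Since these estimates are standard for $G$-SDEs with Lipschitz coefficients (and the paper itself cites them without proof), this is a misattribution rather than a gap, but the justification should point to the increment estimates, not to the continuity of the flow in $y$.
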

\begin{proof}
By Lemma 3.1 in Chap VI of \cite{P7}, we can choose a sequence of  $\varphi_m\in C_b(\Omega_n)$  such that $|\varphi_{m}|\leq C_\varphi$, $0\leq |\varphi_{m}(\omega)-\varphi_{m}({\omega}')|\leq m||\omega-{\omega}'||_{C[0,m]}$ and $\varphi_{m}\uparrow \varphi$, as $m\rightarrow \infty$. We  pick a compact set $K\subset \mathbb{R}^n$ such that $y_k,y\in K$ for each $k\geq 1$ and then the family
	$
	\mathcal{P}^{K}_2
	$
is tight by Kolmogorov's criterion.
	Then for any fixed $\varepsilon>0$, there is a compact set $\widetilde{K}\subset\Omega_n$ such that  $c^{K}_2(\widetilde{K}^c)\leq\varepsilon$, which implies $c^z_2(\widetilde{K}^c)\leq\varepsilon$ uniformly for $z\in K$. By Dini's theorem,
	$\varphi_m\uparrow \varphi$ uniformly on $\widetilde{K}$. So we can take $m$ large enough such that $0\leq \varphi-\varphi_m\leq \varepsilon$ on $\widetilde{K}$. Then by the basic estimate for $G$-SDEs, we obtain some constant $C\geq 0$ such that
	\begin{align*}
	\mathbb{\hat{E}}[|\varphi(X^y_{\cdot})-\varphi(X^{y_k}_{\cdot})|]
	&\leq \mathbb{\hat{E}}^y_2[|\varphi-\varphi_m|]+\mathbb{\hat{E}}[|\varphi_m(X^y_\cdot)-\varphi_m(X^{y_k}_\cdot)|]+\mathbb{\hat{E}}^{y_k}_2[|\varphi-\varphi_m|]\\
	&\leq \mathbb{\hat{E}}^y_2[|\varphi-\varphi_m|I_{\widetilde{K}}]+mC|y-y_k|+\mathbb{\hat{E}}^{y_k}_2[|\varphi-\varphi_m|I_{\widetilde{K}}]+2C_\varphi c^{y}_2({\widetilde{K}}^c)+2C_\varphi c^{y_k}_2({\widetilde{K}}^c)\\
	&\leq 2\varepsilon +mC|y-y_k|+4C_\varphi\varepsilon.
	\end{align*}
	Letting $k\rightarrow \infty$, we obtain that
	$$
	\limsup\limits_{k\rightarrow\infty}\mathbb{\hat{E}}[|\varphi(X^y_{\cdot})-\varphi(X^{y_k}_{\cdot})|]\leq 2\varepsilon +4C_\varphi\varepsilon.
	$$
	Since $\varepsilon$ can be arbitrary small, we obtain the desired lemma.
	\end{proof}
\begin{lemma}\label{SDE belong to G space}
	Assume  $\varphi\in C_b(\Omega_n)$. Then it holds that
	$\varphi(X^x_\cdot)\in L_C^1(\Omega_d).$
\end{lemma}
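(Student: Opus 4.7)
My plan is to show that $\varphi(X^x_\cdot)$ lies in $L^1_C(\Omega_d)$ by producing a Cauchy sequence in $\|\cdot\|_{1}$ from $C_b(\Omega_d)$ that converges to it. The tail condition in Theorem \ref{LG characteriazation theorem} is automatic since $\varphi$ is bounded, so only the approximability matters. The natural candidate is a piecewise-linear Euler scheme $X^{x,n}$ on the dyadic grid $t^n_k = k2^{-n}$: set $X^{x,n}_0 = x$ and, for $t \in [t^n_k, t^n_{k+1}]$,
\[
X^{x,n}_t = X^{x,n}_{t^n_k} + b(X^{x,n}_{t^n_k})(t-t^n_k) + \sigma(X^{x,n}_{t^n_k})(B_t - B_{t^n_k}).
\]
Because $b, \sigma$ are Lipschitz and only the values $B_{t^n_k}$ and the linear path between them enter, an induction in $k$ shows that $\omega \mapsto X^{x,n}_\cdot(\omega)$ is a continuous map $(\Omega_d, \rho_d) \to (\Omega_n, \rho_n)$ (the exponentially decaying weights in $\rho_n$ taking care of the infinite horizon). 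Hence $\varphi(X^{x,n}_\cdot) \in C_b(\Omega_d) \subset L^1_C(\Omega_d)$.

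Next I would establish the $G$-Euler convergence
\[
\hat{\mathbb{E}}\Big[\sup_{t \leq T}|X^{x,n}_t - X^x_t|^2\Big] \longrightarrow 0, \qquad \text{for each } T > 0,
\]
by combining the second-moment bound in Proposition \ref{Bcontrol} for both the $dB$-term and the $ds$-term, the Lipschitz continuity of $b,\sigma$, and Gronwall's inequality; this mirrors the very contraction-mapping argument that produced $X^x$ in $M^2_C$. The same estimates give uniform-in-$n$ Kolmogorov-type moment bounds on the increments of $X^{x,n}$, so by the Kolmogorov criterion the family $\bigcup_n \mathcal{P} \circ (X^{x,n}_\cdot)^{-1} \cup \mathcal{P} \circ (X^x_\cdot)^{-1}$ is tight on $\Omega_n$.

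To pass from $X^{x,n} \to X^x$ in $M^2_C$ to $\varphi(X^{x,n}_\cdot) \to \varphi(X^x_\cdot)$ in $\|\cdot\|_1$, I would reuse the argument of Lemma \ref{SDE continuity wrt y} verbatim: choose $\varphi_m \in C_b(\Omega_n)$ with $|\varphi_m| \leq C_\varphi$, $|\varphi_m(\omega)-\varphi_m(\omega')| \leq m \|\omega-\omega'\|_{C[0,m]}$ and $\varphi_m \uparrow \varphi$; fix $\varepsilon > 0$ and a compact $\widetilde K \subset \Omega_n$ with upper capacity less than $\varepsilon$ outside $\widetilde K$ uniformly over the tight family; choose $m$ by Dini's theorem so that $|\varphi - \varphi_m| \leq \varepsilon$ on $\widetilde K$; and split
\[
\hat{\mathbb{E}}[|\varphi(X^{x,n}_\cdot) - \varphi(X^x_\cdot)|] \leq \hat{\mathbb{E}}^{x,n}_2[|\varphi - \varphi_m|] + \hat{\mathbb{E}}[|\varphi_m(X^{x,n}_\cdot) - \varphi_m(X^x_\cdot)|] + \hat{\mathbb{E}}^{x}_2[|\varphi - \varphi_m|].
\]
The outer terms are $O(\varepsilon)$ by the uniform tightness; the middle term is bounded by $m\,\hat{\mathbb{E}}[\sup_{t\leq m}|X^{x,n}_t - X^x_t|]$ and vanishes as $n \to \infty$. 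Since $L^1_C(\Omega_d)$ is closed under $\|\cdot\|_1$, this yields $\varphi(X^x_\cdot) \in L^1_C(\Omega_d)$.

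The main obstacle, and the part requiring the most care, is justifying the uniform-in-$n$ tightness and the $G$-Euler convergence simultaneously: one must verify that the BDG-type estimate of Proposition \ref{Bcontrol} can be iterated on the polygonal approximation while preserving the Lipschitz/linear-growth constants, so that Gronwall produces a rate in $n$ independent of the horizon. Once this standard but delicate bookkeeping is in place, the rest of the argument is a clean combination of tightness, Dini's theorem, and the closedness of $L^1_C(\Omega_d)$.
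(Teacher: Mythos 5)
Your proof is correct, but it takes a genuinely different route from the paper. The paper disposes of this lemma in one line: it is the special case $\tau=0$ of \eqref{a3 belong lemma} in Theorem \ref{extended BM strongmarkov1}, i.e.\ it leans entirely on the strong Markov machinery imported from \cite{HJL}. You instead give a self-contained approximation argument: a piecewise-linear Euler scheme $X^{x,n}$ whose paths depend continuously on $\omega$, so that $\varphi(X^{x,n}_\cdot)\in C_b(\Omega_d)$, followed by convergence $\varphi(X^{x,n}_\cdot)\to\varphi(X^x_\cdot)$ in $\|\cdot\|_1$ via the same tightness-plus-Dini splitting used in Lemma \ref{SDE continuity wrt y}, and closedness of $L^1_C(\Omega_d)$. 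This is a valid and arguably more elementary route (it re-proves, in effect, the $\tau=0$ case of the cited result without conditional expectations at stopping times), at the cost of needing two standard but nontrivial inputs you only sketch: a maximal estimate $\hat{\mathbb{E}}[\sup_{t\leq T}|X^{x,n}_t-X^x_t|^2]\to 0$, which is not literally Proposition \ref{Bcontrol} (no supremum there) but follows by applying classical Doob/BDG inequalities under each $P\in\mathcal{P}$ through the decomposition of Theorem \ref{generalized G-BM decomposition}, with constants uniform over $\mathcal{P}$; and uniform-in-$n$ Kolmogorov bounds giving tightness of $\bigcup_n\mathcal{P}\circ(X^{x,n}_\cdot)^{-1}\cup\mathcal{P}\circ(X^x_\cdot)^{-1}$. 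One small correction: the rate in $n$ does not need to be independent of the horizon, since in your splitting the index $m$ (hence the time window $[0,m]$) is fixed before letting $n\to\infty$; constants depending on $T=m$, e.g.\ exponentially through Gronwall, are perfectly adequate, so what you flag as the main obstacle is actually harmless.
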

\begin{proof}
This follows from (\ref{a3 belong lemma}) for stopping time $\tau=0$ in Theorem \ref{extended BM strongmarkov1}.
	\end{proof}

 Now we are in a position to state the proof of Theorem \ref{exit times equal lemma}.

\begin{proof}[The proof of  Theorem \ref{exit times equal lemma}]
The case that $x\in \overline{Q}^c$ is trivial and the case that $x\in \partial{Q}$ is from Lemma \ref{stopping time lemma 2}.
Then we just need to consider the case that $x\in Q$. It suffices to prove  that $\mathbb{\hat{E}}[({\tau}_{\overline{Q}}^x-{\tau}_Q^x)\wedge t]=0$ for each $t>0$.

Denote $\delta_t=({\tau}_{\overline{Q}}^{0,1}-{\tau}_Q^{0,1})\wedge t$, then $({\tau}_{\overline{Q}}^x-{\tau}_Q^x)\wedge t=\delta_t(X^x_\cdot)=\delta_t(X^x_{{\tau}_Q^x+\cdot})$ by the definition. Since $\delta_t$ is bounded and upper semi-continuous on $\Omega_n$, we can find a sequence of continuous functions $(f_m)_{m\geq 1}$ on $\Omega_n$ such that $0\leq f_m \leq 2t$  and $f_m\downarrow \delta_t$. Then it follows from Theorem \ref{extended BM strongmarkov1} that,
\begin{align*}
\mathbb{\hat{E}}[({\tau}_{\overline{Q}}^x-{\tau}_Q^x)\wedge t]=\mathbb{\hat{E}}[\delta_t(X^x_{{\tau}_Q^x+\cdot})]
\leq \mathbb{\hat{E}}[f_m(X^x_{{\tau}_Q^x+\cdot})]=\mathbb{\hat{E}}[\mathbb{\hat{E}}[f_m(X^y_{\cdot})]_{y=X^x_{{\tau}_Q^x}}], \ \ \ \text{for all}\ m\geq 1.
\end{align*}

Denote $\varphi_m(y)=\mathbb{\hat{E}}[f_m(X^y_{\cdot})]$ for   $y\in \mathbb{R}^n$.
Recalling Lemma \ref{SDE belong to G space}, we have $f_m(X^y_{\cdot})\in L^1_C(\Omega_d)$. Then   Proposition \ref{downward convergence proposition} and Lemma \ref{stopping time lemma 2} imply that for each $y\in\partial Q$
\[
\varphi_m(y)\downarrow \mathbb{\hat{E}}[\delta_t(X^y_{\cdot})]=0, \ \text{as}\ m\rightarrow\infty.
\]
 Since $\varphi_m$ is continuous on $\partial Q$ by Lemma \ref{SDE continuity wrt y}, we derive that
$\varphi_m(y)\downarrow 0$ uniformly on $\partial{Q}$ by Dini's theorem. Consequently, we deduce that
$$
\mathbb{\hat{E}}[\mathbb{\hat{E}}[f_m(X^y_{\cdot})]_{y=X^x_{{\tau}_Q^x}}]=\mathbb{\hat{E}}[\varphi_m({X^x_{{\tau}_Q^x}})]\downarrow 0, \ \text{as}\ m\rightarrow\infty,
$$
which implies the desired result.
\end{proof}

Now we are going to show that the exit times are quasi-continuous.
\begin{lemma}\label{induced probability measures weakly compact}
If $K$ is a compact set in $ \mathbb{R}^n$, then  the set $\mathcal{P}^K_2$ is weakly compact on $\Omega_n$.
\end{lemma}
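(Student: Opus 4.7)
The plan is to invoke Prokhorov's theorem on the Polish space $\Omega_n$: it suffices to prove (a) that $\mathcal{P}^K_2$ is tight and (b) that $\mathcal{P}^K_2$ is closed under weak convergence. Together these give relative compactness plus closedness, hence weak compactness.

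For tightness, I would rely on the standard moment estimates for $G$-SDEs. Since $K$ is bounded, the Lipschitz assumption on $b,h_{ij},\sigma$ together with Proposition \ref{Bcontrol} yields constants $C_p$, independent of $x\in K$ and $P\in\mathcal{P}$, such that
\[
E_P[|X^x_t-X^x_s|^p]\le \hat{\mathbb{E}}[|X^x_t-X^x_s|^p]\le C_p|t-s|^{p/2},\qquad E_P[|X^x_0|^p]\le\sup_{x\in K}|x|^p<\infty,
\]
for every $P\in\mathcal{P}$ and $x\in K$, for any $p\ge 2$. Applying Kolmogorov's tightness criterion uniformly across $\mathcal{P}^K_2$ (which is of the form $\mathcal{P}\circ(X^x_\cdot)^{-1}$, $x\in K$) produces the desired tight family on $\Omega_n$.

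For closedness, take a sequence $Q_k=P_k\circ(X^{x_k}_\cdot)^{-1}\in\mathcal{P}^K_2$ with $Q_k\Rightarrow Q$. By compactness of $K$ a subsequence satisfies $x_k\to x\in K$, and by weak compactness of $\mathcal{P}$ a further subsequence satisfies $P_k\Rightarrow P\in\mathcal{P}$. I claim $Q=P\circ(X^x_\cdot)^{-1}$, which places $Q$ in $\mathcal{P}^K_2$. Fix $\varphi\in C_b(\Omega_n)$ and split
\[
\bigl|E_{P_k}[\varphi(X^{x_k}_\cdot)]-E_P[\varphi(X^x_\cdot)]\bigr|\le \hat{\mathbb{E}}\bigl[|\varphi(X^{x_k}_\cdot)-\varphi(X^x_\cdot)|\bigr]+\bigl|E_{P_k}[\varphi(X^x_\cdot)]-E_P[\varphi(X^x_\cdot)]\bigr|.
\]
The first term vanishes as $k\to\infty$ by Lemma \ref{SDE continuity wrt y}. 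For the second term, note that $\varphi(X^x_\cdot)\in L^1_C(\Omega_d)$ by Lemma \ref{SDE belong to G space}; since $L^1_C(\Omega_d)$ is the completion of $C_b(\Omega_d)$ under $\|\cdot\|_{1,\mathcal{P}}$, for any $\varepsilon>0$ I can find $\xi\in C_b(\Omega_d)$ with $\hat{\mathbb{E}}[|\xi-\varphi(X^x_\cdot)|]<\varepsilon$, and then
\[
\bigl|E_{P_k}[\varphi(X^x_\cdot)]-E_P[\varphi(X^x_\cdot)]\bigr|\le 2\hat{\mathbb{E}}[|\xi-\varphi(X^x_\cdot)|]+|E_{P_k}[\xi]-E_P[\xi]|\le 2\varepsilon+o(1),
\]
using weak convergence $P_k\Rightarrow P$ on the continuous bounded $\xi$. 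Letting $\varepsilon\downarrow 0$ gives the desired convergence, and hence $Q_k\Rightarrow P\circ(X^x_\cdot)^{-1}$; uniqueness of weak limits forces $Q=P\circ(X^x_\cdot)^{-1}\in\mathcal{P}^K_2$.

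The main obstacle is the second term in the split: weak convergence $P_k\Rightarrow P$ does not immediately give convergence against the $\mathcal{P}$-quasi-continuous (rather than continuous) functional $\varphi(X^x_\cdot)$. The density of $C_b(\Omega_d)$ in $L^1_C(\Omega_d)$ together with the fact that the approximation error is controlled by $\hat{\mathbb{E}}=\sup_{P\in\mathcal{P}}E_P$, uniformly in $k$, is precisely what bridges this gap.
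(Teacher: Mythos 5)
Your proof is correct and is essentially the paper's argument: the closedness step --- extracting $x_k\to x$ by compactness of $K$ and $P_k\Rightarrow P$ by weak compactness of $\mathcal{P}$, then splitting the difference of expectations and controlling the two terms via Lemma \ref{SDE continuity wrt y} and the membership $\varphi(X^x_\cdot)\in L^1_C(\Omega_d)$ --- is exactly what the paper does, the only difference being that the paper cites Lemma 29 of \cite{DHP} for the second term while you re-derive it from the density of $C_b(\Omega_d)$ in $L^1_C(\Omega_d)$ and the uniform bound by $\hat{\mathbb{E}}$. The extra tightness/Prokhorov layer is harmless but not needed, since your subsequence argument already exhibits a weak limit in $\mathcal{P}^K_2$ for every sequence, which is sequential (hence weak) compactness directly.
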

\begin{proof}
Let $(P_k\circ(X_{\cdot}^{x_k})^{-1})_{k\geq 1}$ be any sequence in $\mathcal{P}^K_2$.
Since $K$ is compact, we can find a subsequence $x_{k_m}$ such that $|x_{k_m}-x|\rightarrow 0$ for some $x\in K$.
Note that $\mathcal{P}$ is weakly compact, there is a subsequence $P_{k_{m_l}}\in \mathcal{P}$ such that $P_{k_{m_l}}$ converges to some $P\in \mathcal{P}$.
For any $\varphi\in C_b(\Omega_n)$, note that $\varphi(X_{\cdot}^{x})\in L^1_C(\Omega_d)$. Then in view of Lemma \ref{SDE continuity wrt y} and Lemma 29 in \cite{DHP}, we get that
\begin{align*}
&\lim\limits_{l\rightarrow\infty}|E_{P_{{k_{m_l}}}\circ(X_{\cdot}^{x_{{k_{m_l}}}})^{-1}}[\varphi]-E_{{P}\circ(X_{\cdot}^{x})^{-1}}[\varphi]|\\
&\leq \lim\limits_{l\rightarrow\infty}|E_{P_{{k_{m_l}}}\circ(X_{\cdot}^{x_{{k_{m_l}}}})^{-1}}[\varphi]-E_{P_{{k_{m_l}}}\circ(X_{\cdot}^{x})^{-1}}[\varphi]|
+\lim\limits_{l\rightarrow\infty}|E_{P_{{k_{m_l}}}\circ(X_{\cdot}^{x})^{-1}}[\varphi]-E_{{P}\circ(X_{\cdot}^{x})^{-1}}[\varphi]|\\
&\leq \lim\limits_{l\rightarrow\infty}|E_{P_{{k_{m_l}}}}[\varphi(X_{\cdot}^{x_{{k_{m_l}}}})]-E_{P_{{k_{m_l}}}}[\varphi(X_{\cdot}^{x})]|
+\lim\limits_{l\rightarrow\infty}|E_{P_{{k_{m_l}}}}[\varphi(X_{\cdot}^{x})]-E_{P}[\varphi(X_{\cdot}^{x})]|\\
&\leq \lim\limits_{l\rightarrow\infty}\mathbb{\hat{E}}[|\varphi(X_{\cdot}^{x_{{k_{m_l}}}})-\varphi(X_{\cdot}^{x})|]
=0,
\end{align*}
which ends the proof.
\end{proof}
\begin{theorem}\label{exit time quasi continuity lemma}
Let $K$ be a bounded set in $ \mathbb{R}^n$. Then ${\tau}_Q^{0,1}$ and ${\tau}_{\overline{Q}}^{0,1}$ both belong to $L_C^1(\Omega_n,\mathcal{P}^K_2)$.
\end{theorem}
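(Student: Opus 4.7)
The plan is to verify the two characterization criteria of Theorem \ref{LG characteriazation theorem}: (a) that each of the two exit times admits a $\mathcal{P}_2^K$-quasi-continuous version and (b) that $\lim_{n\to\infty}\hat{\mathbb{E}}_{\mathcal{P}_2^K}[\tau I_{\{\tau>n\}}]=0$. For (b), since $K$ is bounded, Lemma \ref{square stopping time lemma} provides a uniform bound $\sup_{x\in K\cap\overline{Q}}\hat{\mathbb{E}}[(\tau^x_{\overline{Q}})^2]\leq C$, while $\tau^x_{\overline{Q}}=0$ trivially for $x\notin\overline{Q}$. Thus $\hat{\mathbb{E}}_{\mathcal{P}_2^K}[(\tau^{0,1}_{\overline{Q}})^2]=\sup_{x\in K}\hat{\mathbb{E}}[(\tau^x_{\overline{Q}})^2]\leq C$, and Markov's inequality gives (b) for $\tau^{0,1}_{\overline{Q}}$ and, by monotonicity, for $\tau^{0,1}_Q\leq\tau^{0,1}_{\overline{Q}}$.

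For (a), I plan to show that both exit times are themselves $\mathcal{P}_2^K$-q.c. Fix $\varepsilon>0$ and $T>0$, and work with the bounded truncations $F:=\tau^{0,1}_{\overline{Q}}\wedge T$ (upper semi-continuous by Lemma \ref{semi-continuity of exit time}) and $G:=\tau^{0,1}_Q\wedge T$ (lower semi-continuous), which are $\mathcal{P}_2^K$-q.s.\ equal thanks to Theorem \ref{exit times equal lemma} applied under each $\mathcal{P}\circ(X^x_\cdot)^{-1}$. Using the standard Moreau-Yosida sup-/inf-convolutions, produce bounded Lipschitz continuous sequences $f_m\downarrow F$ and $g_m\uparrow G$ valued in $[0,T]$, with the pointwise sandwich $g_m\leq G\leq F\leq f_m$. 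Then $f_m-g_m\in C_b(\Omega_n)$ is non-negative, decreasing, and converges q.s.\ to $F-G=0$. Since $\mathcal{P}_2^K$ is weakly compact by Lemma \ref{induced probability measures weakly compact}, Proposition \ref{downward convergence proposition}(2)(a) gives $\hat{\mathbb{E}}_{\mathcal{P}_2^K}[f_m-g_m]\downarrow 0$. A Borel-Cantelli-type summability argument combined with Markov's inequality then picks a subsequence $m_k$ such that the open set $O'_\varepsilon:=\bigcup_k\{f_{m_k}-g_{m_k}>1/k\}$ (open by continuity of the $f_{m_k},g_{m_k}$) satisfies $c_2^K(O'_\varepsilon)<\varepsilon/2$. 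On the closed complement $(O'_\varepsilon)^c$ one has $f_{m_k}-g_{m_k}\leq 1/k$, so the $f_{m_k}$ and $g_{m_k}$ converge uniformly there; being sandwiched between them, $F$ and $G$ agree with this common continuous uniform limit on $(O'_\varepsilon)^c$, establishing quasi-continuity of the truncated versions.

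To remove the truncation, apply (b) once more: $c_2^K(\{\tau^{0,1}_{\overline{Q}}\geq T\})\to 0$ as $T\to\infty$, and this level set is closed since $\tau^{0,1}_{\overline{Q}}$ is USC. The outer regularity for closed sets under weakly compact families (Proposition \ref{downward convergence proposition}(2)(b)) then furnishes an open $O_T\supset\{\tau^{0,1}_{\overline{Q}}\geq T\}$ with $c_2^K(O_T)<\varepsilon/2$. Choosing $T$ accordingly and setting $F_\varepsilon:=(O'_\varepsilon\cup O_T)^c$, the closed set $F_\varepsilon$ has $c_2^K(F_\varepsilon^c)<\varepsilon$, and on $F_\varepsilon\subset O_T^c$ one has $\tau^{0,1}_{\overline{Q}}<T$, so $\tau^{0,1}_{\overline{Q}}=\tau^{0,1}_{\overline{Q}}\wedge T$ there, continuous by the previous step; the same conclusion for $\tau^{0,1}_Q$ follows from $\tau^{0,1}_Q\leq\tau^{0,1}_{\overline{Q}}<T$ on $F_\varepsilon$.

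The main obstacle I anticipate is the middle step: coordinating simultaneous continuous approximations from above and below for two q.s.-equal semi-continuous functions, and then converting the scalar downward monotone convergence of the expectations into uniform convergence on a large closed set. This is precisely the point at which both the weak compactness of $\mathcal{P}_2^K$ (for downward monotone convergence and for outer regularity) and the q.s.-equality from Theorem \ref{exit times equal lemma} (forcing the bracketing differences to vanish) are used essentially.
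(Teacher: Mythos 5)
Your proof is correct, and it reaches the conclusion by the same overall skeleton as the paper (verify the two criteria of Theorem \ref{LG characteriazation theorem}, with the uniform-integrability half handled exactly as in the paper via Lemma \ref{square stopping time lemma} and Markov's inequality), but the quasi-continuity half is argued by a genuinely different mechanism. The paper writes the polar set $\{\tau_Q^{0,1}<\tau_{\overline{Q}}^{0,1}\}$ as a countable union of \emph{closed} sets $\{\tau_Q^{0,1}\leq s\}\cap\{\tau_{\overline{Q}}^{0,1}\geq r\}$ (using Lemma \ref{semi-continuity of exit time}), covers it by a small open set $O$ via the outer regularity in Proposition \ref{downward convergence proposition}(2)(b), and then observes that on $F=O^c\cap\{\tau_Q^{0,1}\leq k\}$ the two exit times coincide, so their common restriction is simultaneously lower and upper semi-continuous, hence continuous --- no approximation by continuous functions is needed. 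You instead truncate, build Moreau--Yosida sandwiches $g_m\leq \tau_Q^{0,1}\wedge T\leq\tau_{\overline{Q}}^{0,1}\wedge T\leq f_m$, use Proposition \ref{downward convergence proposition}(2)(a) (downward convergence, again via weak compactness from Lemma \ref{induced probability measures weakly compact}) together with Markov's inequality, countable subadditivity and a Borel--Cantelli selection to obtain uniform convergence on a large closed set, and then remove the truncation with the tail bound plus outer regularity. Both routes rest on the same three pillars --- Theorem \ref{exit times equal lemma}, the semi-continuity of the exit times, and weak compactness of $\mathcal{P}_2^K$ --- so your argument is sound; what the paper's version buys is brevity (the USC$=$LSC observation replaces the whole convolution/uniform-convergence machinery, and no truncation-removal step is needed since $\{\tau_Q^{0,1}\leq k\}$ is already closed), while yours is more self-contained in that it never needs the explicit closed-set decomposition of the polar set. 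One small point to make explicit: Lemma \ref{induced probability measures weakly compact} is stated for compact $K$, so for a merely bounded $K$ you should first replace $K$ by a compact superset (e.g.\ its closure), exactly the one-line reduction the paper performs; quasi-continuity and the integrability condition with respect to the larger family then transfer to $\mathcal{P}_2^K$.
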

\begin{proof}
We just need to prove the case that $K$ is a compact set since  a bounded set is contained in some compact set.
Let $\Gamma=\{{\tau}_Q^{0,1}={\tau}_{\overline{Q}}^{0,1}\}$. Then  $c^K_2(\Gamma^c)=\sup_{x\in K}c^x_2(\Gamma^c)=\sup_{x\in K}c(\{{\tau}_Q^{x}<{\tau}_{\overline{Q}}^{x}\})=0$ by Theorem
\ref{exit times equal lemma}. Moreover, we can write the polar set as
$$\Gamma^c=\{{\tau}_Q^{0,1}<{\tau}_{\overline{Q}}^{0,1}\}=\bigcup_{s<r;s,r\in \mathbb{Q}}\{{\tau}_Q^{0,1}\leq s\}\cap\{{\tau}_{\overline{Q}}^{0,1}\geq r\}.$$
By the semi-continuity of ${\tau}_Q^{0,1}$ and ${\tau}_{\overline{Q}}^{0,1}$, we conclude that $\{{\tau}_Q^{0,1}\leq s\}\cap\{{\tau}_{\overline{Q}}^{0,1}\geq r\}$ is closed.
Note that $c^K_2$ is weakly compact by Lemma \ref{induced probability measures weakly compact}. Then according to Proposition \ref{downward convergence proposition},
for any $\varepsilon>0$, there exists an open set $O\supset{\Gamma}^c$ such that $c^K_2(O)<\frac\varepsilon 2$.
By Lemma \ref{stopping time lemma}, we can take $k$ large enough such that
$
c^K_2({\tau}_Q^{0,1}>k)\leq \frac\varepsilon 2.
$
Set $F=O^c\cap \{{\tau}_Q^{0,1}\leq k \}$. It is obvious that $c^K_2(F^c)\leq \varepsilon $ and ${\tau}_Q^{0,1}={\tau}_{\overline{Q}}^{0,1}$ are continuous on $F$.

Recalling Lemma \ref{square stopping time lemma}, we conclude that
$$ \mathbb{\hat{E}}^K_2[{{\tau}_{{Q}}^{0,1}}I_{\{{{\tau}_{{Q}}^{0,1}}>N\}}]\leq\mathbb{\hat{E}}^K_2[{{\tau}_{\overline{Q}}^{0,1}}I_{\{{{\tau}_{\overline{Q}}^{0,1}}>N\}}]\leq \frac{\mathbb{\hat{E}}^K_2[|{{\tau}_{\overline{Q}}^{0,1}}|^2]}{N}=\frac{\sup_{x\in K}\mathbb{\hat{E}}[|{{\tau}_{\overline{Q}}^{x}}|^2]}{N}\rightarrow 0,\ \text{as}\ N\rightarrow \infty,$$
which together with the characterization of $L_C^1(\Omega_n,\mathcal{P}^K_2)$ (Theorem \ref{LG characteriazation theorem}) imply the desired the result.
\end{proof}

Finally we study the continuity property of ${\tau}^{x}_Q$ with respect to $x$. For each $\varepsilon>0$,
 we denote $Q_\varepsilon:=\{x\in Q:dist(x,\partial Q)>\varepsilon\}$ and $Q_{-\varepsilon}:=\{x\in \mathbb{R}^n:dist(x,Q)<\varepsilon\}$.
Then the exterior ball condition can be preserved for the following approximation from inside.
\begin{lemma}
For any $\varepsilon>0$,	$Q_\varepsilon$ also satisfies the exterior ball condition.
\end{lemma}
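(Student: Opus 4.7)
The plan is to construct, for each boundary point $x \in \partial Q_\varepsilon$, an exterior ball at $x$ by concentrically enlarging, by $\varepsilon$, the exterior ball of $Q$ at a suitably chosen point $y \in \partial Q$. Throughout, I use that $\mathrm{dist}(\cdot,\partial Q)$ is continuous and that $\partial Q$ is compact.

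First, I would establish that every $x\in\partial Q_\varepsilon$ satisfies $\mathrm{dist}(x,\partial Q)=\varepsilon$. The inequality $\mathrm{dist}(x,\partial Q)\geq\varepsilon$ comes from approximating $x$ by $x_n\in Q_\varepsilon$. For the reverse, I would first note that $\partial Q_\varepsilon\subset Q$: if $x\in\partial Q$ then any $x_n\in Q_\varepsilon$ would have to satisfy $|x_n-x|\geq\mathrm{dist}(x_n,\partial Q)>\varepsilon$, which prevents $x_n\to x$. Thus $x\in Q$, and approximating $x$ by $x_n\in Q_\varepsilon^c$ forces $x_n\in Q$ (for $n$ large, by openness of $Q$) with $\mathrm{dist}(x_n,\partial Q)\leq\varepsilon$, giving $\mathrm{dist}(x,\partial Q)\leq\varepsilon$. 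Compactness of $\partial Q$ then yields some $y\in\partial Q$ with $|x-y|=\varepsilon$.

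Second, let $U(z,r)\subset Q^c$ be the exterior ball provided at $y$ by the hypothesis on $Q$, so that $|y-z|=r$. I would then propose $\tilde U:=U(z,r+\varepsilon)$ as an exterior ball of $Q_\varepsilon$ at $x$. For the inclusion $\tilde U\subset Q_\varepsilon^c$, take $w\in\tilde U$; the case $w\notin Q$ is immediate, while for $w\in Q$ one has $\mathrm{dist}(w,Q^c)\leq |w-z|-r<\varepsilon$, and since $Q$ is open this equals $\mathrm{dist}(w,\partial Q)$, so $w\notin Q_\varepsilon$. For $x\in\partial\tilde U$, the triangle inequality gives $|x-z|\leq|x-y|+|y-z|=r+\varepsilon$; conversely, $\tilde U$ is open and disjoint from $Q_\varepsilon$, so $\tilde U\cap\overline{Q_\varepsilon}=\emptyset$, and since $x\in\overline{Q_\varepsilon}$ we get $x\notin\tilde U$, i.e.\ $|x-z|\geq r+\varepsilon$.

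The argument is essentially elementary geometry, and the main (mild) obstacle is the characterization $\mathrm{dist}(x,\partial Q)=\varepsilon$ on $\partial Q_\varepsilon$, which is the only place where one has to be careful in distinguishing the two possible behaviors of an approximating sequence and in verifying that $\partial Q_\varepsilon\subset Q$; once this is established, the enlarged ball $U(z,r+\varepsilon)$ is forced to be tangent to $\overline{Q_\varepsilon}$ exactly at $x$, and the exterior ball property at $x$ follows automatically.
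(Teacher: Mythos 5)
Your proof is correct, and it takes a genuinely different route from the paper's. The paper picks the nearest boundary point $x'\in\partial Q$ (with $d(x,x')=\varepsilon$) and \emph{translates} the exterior ball $U(y,r)$ of $Q$ at $x'$ by the vector $x-x'$, keeping the same radius $r$: disjointness from $Q_\varepsilon$ follows in one line because $z\in Q_\varepsilon$ implies $z+(x'-x)\in Q$, hence $d(z,y+x-x')=d(z+(x'-x),y)>r$, and tangency at $x$ is immediate from $d(x',y)=r$. You instead \emph{enlarge} the ball concentrically to $U(z,r+\varepsilon)$, prove disjointness via the identity $\mathrm{dist}(w,Q^c)=\mathrm{dist}(w,\partial Q)$ for $w\in Q$, and obtain tangency at $x$ indirectly, combining the triangle inequality with the fact that an open set disjoint from $Q_\varepsilon$ misses $\overline{Q_\varepsilon}$. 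Both arguments are sound; the paper's translation trick yields a shorter verification and preserves the radius $r$ of the original exterior ball (which would matter if one wanted a uniform exterior ball condition), while your version is more self-contained in that it carefully justifies the step the paper takes for granted, namely that every $x\in\partial Q_\varepsilon$ lies in $Q$ and satisfies $\mathrm{dist}(x,\partial Q)=\varepsilon$ with the distance attained on the compact set $\partial Q$ (in fact your construction only uses $\mathrm{dist}(x,\partial Q)\le\varepsilon$ and attainment, since the lower bound on $|x-z|$ comes for free from the disjointness argument).
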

\begin{proof}
	Let $x$ be in $\partial Q_\varepsilon$.
Then there exists a point $x'\in \partial Q$ such that $d(x,x')=\varepsilon$.
Assume that $U(y,r)$ is the exterior ball of $Q$ at $x'$. We claim that
	$U(y+(x-x'),r)=U(y,r)+(x-x')$ is the   exterior ball of $Q_\varepsilon$ at $x$. Indeed, for any $z\in Q_\varepsilon$, we have $z+(x'-x)\in Q$ and then
	$$
	d(z,y+x-x')=d(z+(x'-x),y)>r.
	$$
The proof is complete.
\end{proof}

For any fixed $T>0$, by a standard argument we can find some constant $C_T$ depending on $T$  such that
$$
\hat{\mathbb{E}}[\sup_{0\leq t\leq T}|X^x_t-X^y_t|^{n+1}]\leq C_T|x-y|^{n+1}.
$$
It follows from Kolmogorov's criterion for continuity  that for any fixed $\alpha\in (0,\frac{1}{n+1})$
\begin{equation}\label{887268439202}
\hat{\mathbb{E}}[\eta_T^{n+1}]<\infty,\ \ \ \text{where}\ \eta_T:=\sup_{x\neq y}\frac{\sup_{0\leq t\leq T}|X^x_t-X^y_t|}{|x-y|^\alpha}.
\end{equation}
From this and Lemma \ref{semi-continuity of exit time}, it is easy to prove that, q.s., ${\tau}^{x}_Q$ and $ {\tau}^{x}_{\overline{Q}}$ are lower and upper-continuous  with respect to $x$.
Then Theorem \ref{exit times equal lemma} implies that, \begin{equation}\label{21312423423}
{\tau}^{x_k}_Q\rightarrow {\tau}^{x}_Q, \ \ \ \  \text{q.s.}
\end{equation}
whenever $|x_k- x|\rightarrow 0$. Moreover, we have
\begin{lemma}\label{tau continuous lemma}
	Assume $|x_k- x|\rightarrow 0$. Then
	\begin{equation}
	\hat{\mathbb{E}}[{{\tau}^{x}_Q\vee {\tau}^{x_k}_Q}-{{\tau}^{x}_Q\wedge {\tau}^{x_k}_Q}]
	\rightarrow 0, \ \text{as}\ k\rightarrow \infty.
	\end{equation}
\end{lemma}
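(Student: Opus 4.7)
The plan is to truncate the exit times at a level $T$, lift the bounded difference to the image space $\Omega_n$, approximate $\tau^{0,1}_Q\wedge T$ there by bounded continuous functions, and push the approximation back to $\Omega_d$ using continuity of the $G$-SDE flow in the starting point. Throughout, fix a bounded $K\subset\mathbb{R}^n$ containing $x$ and all $x_k$.

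First I would dispose of the tail. The triangle inequality $|\tau^{x_k}_Q-\tau^x_Q|\leq |\tau^{x_k}_Q\wedge T-\tau^x_Q\wedge T|+(\tau^{x_k}_Q-T)^{+}+(\tau^x_Q-T)^{+}$, combined with $(s-T)^{+}\leq s^2/T$ and the second moment bound of Lemma \ref{square stopping time lemma} (which applies uniformly in $y\in\overline{Q}$, the remaining case $y\in\overline{Q}^c$ being trivial), makes each tail contribution at most $C/T$ uniformly in $k$. Hence it suffices to prove $\hat{\mathbb{E}}[|\tau^{x_k}_Q\wedge T-\tau^x_Q\wedge T|]\to 0$ for each fixed $T$, since the tail error then vanishes upon letting $T\to\infty$ at the end.

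Set $\phi:=\tau^{0,1}_Q\wedge T$, so that $\tau^y_Q\wedge T=\phi(X^y_\cdot)$. By Theorem \ref{exit time quasi continuity lemma} we have $\tau^{0,1}_Q\in L^1_C(\Omega_n,\mathcal{P}^K_2)$, and the bounded $1$-Lipschitz truncation $\wedge T$ preserves this space, so $\phi\in L^1_C(\Omega_n,\mathcal{P}^K_2)$. By the definition of $L^1_C$ as the completion of $C_b(\Omega_n)$ in the $\hat{\mathbb{E}}^K_2[|\cdot|]$-norm, there exist $\phi^{(j)}\in C_b(\Omega_n)$ with $\hat{\mathbb{E}}^K_2[|\phi-\phi^{(j)}|]\to 0$ as $j\to\infty$. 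On $\Omega_d$ the triangle inequality then gives
\[
\hat{\mathbb{E}}[|\phi(X^{x_k}_\cdot)-\phi(X^x_\cdot)|]\leq \hat{\mathbb{E}}[|(\phi-\phi^{(j)})(X^{x_k}_\cdot)|]+\hat{\mathbb{E}}[|\phi^{(j)}(X^{x_k}_\cdot)-\phi^{(j)}(X^x_\cdot)|]+\hat{\mathbb{E}}[|(\phi-\phi^{(j)})(X^x_\cdot)|].
\]
The first and third terms equal $\hat{\mathbb{E}}^{x_k}_2[|\phi-\phi^{(j)}|]$ and $\hat{\mathbb{E}}^{x}_2[|\phi-\phi^{(j)}|]$, and are both dominated by $\hat{\mathbb{E}}^K_2[|\phi-\phi^{(j)}|]$. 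For the middle term, $\phi^{(j)}\in C_b(\Omega_n)$ and Lemma \ref{SDE continuity wrt y} yield vanishing in $k$ for each fixed $j$. Taking $\limsup_k$ and then $j\to\infty$ produces $\hat{\mathbb{E}}[|\tau^{x_k}_Q\wedge T-\tau^x_Q\wedge T|]\to 0$; combined with the tail control this finishes the proof.

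The main conceptual difficulty worth flagging is that $\omega\mapsto X^y_\cdot(\omega)$ is only measurable, not continuous, so q.s. convergence $\tau^{x_k}_Q\to\tau^x_Q$ established in \eqref{21312423423} does not automatically pass to $\hat{\mathbb{E}}$-convergence via weak compactness of $\mathcal{P}$ on $\Omega_d$ alone. The detour through $\Omega_n$ is precisely the device that sidesteps this obstacle: on $\Omega_n$ the canonical coordinate is trivially continuous, Theorem \ref{exit time quasi continuity lemma} supplies $L^1_C$-approximation of $\tau^{0,1}_Q\wedge T$ by elements of $C_b(\Omega_n)$, and Lemma \ref{SDE continuity wrt y} is exactly the bridge transporting continuity in the initial condition back through the $G$-SDE.
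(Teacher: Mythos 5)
Your argument is correct, but it takes a genuinely different route from the paper's. The paper works pathwise in the starting point: it fixes the q.s.\ H\"older modulus $\eta_T$ of $x\mapsto X^x$ from Kolmogorov's criterion, sandwiches $\tau^{x_k}_Q$ between the exit times from the perturbed domains $Q_{L\varepsilon^\alpha}$ and $\widetilde{Q}_{-L\varepsilon^\alpha}$ on the event $\{\eta_T\leq L\}\cap\{\tau^{x_k}_Q\leq T\}$, and then kills the main term by letting the domains shrink to $\overline{Q}$, which requires Proposition \ref{downward convergence proposition} (hence weak compactness of the induced family and quasi-continuity of the exit times from the \emph{perturbed} domains, for which the exterior ball condition must be re-verified) together with $\tau^x_Q=\tau^x_{\overline{Q}}$ from Theorem \ref{exit times equal lemma}; the remaining terms are handled by the second-moment bound and by taking $L$ large. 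You instead exploit the uniformity over starting points already built into Theorem \ref{exit time quasi continuity lemma}: since $\tau^{0,1}_Q\in L^1_C(\Omega_n,\mathcal{P}^K_2)$ and $L^1_C$ is by definition the closure of $C_b(\Omega_n)$ in the $\hat{\mathbb{E}}^K_2$-norm, you approximate the exit-time functional by $\phi^{(j)}\in C_b(\Omega_n)$ uniformly over all initial points in $K$, transport the continuity in the initial condition through Lemma \ref{SDE continuity wrt y}, and control the error terms by $\hat{\mathbb{E}}^K_2[|\phi-\phi^{(j)}|]$; tails are again handled by Lemma \ref{square stopping time lemma}. This avoids the domain-perturbation machinery and any renewed appeal to Theorem \ref{exit times equal lemma} or Proposition \ref{downward convergence proposition} at this stage (they are of course hidden inside Theorem \ref{exit time quasi continuity lemma}), and it shows the convergence is in fact uniform over starting points in a bounded set; the paper's argument, on the other hand, makes explicit how the H\"older continuity of the flow controls the exit-time perturbation, which is of independent interest. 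Two cosmetic remarks: the truncation at level $T$ is not needed, since $\tau^{0,1}_Q$ itself already lies in $L^1_C(\Omega_n,\mathcal{P}^K_2)$ and the tail argument is subsumed by the norm approximation; and when you write $\hat{\mathbb{E}}[|(\phi-\phi^{(j)})(X^{x_k}_\cdot)|]=\hat{\mathbb{E}}^{x_k}_2[|\phi-\phi^{(j)}|]$ you are using the image-measure identity for each $P\in\mathcal{P}$ together with the fact that a $\mathcal{P}^K_2$-polar modification of $\phi$ pulls back to a $\mathcal{P}$-polar set under $X^{x_k}_\cdot$ for $x_k\in K$ --- worth one line, but consistent with how the paper itself passes between $\hat{\mathbb{E}}$ and $\hat{\mathbb{E}}^x_2$.
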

\begin{proof}
	For any $L>0,T>0$ and $\varepsilon>0$, let $\alpha$  and $\eta_T$ be defined as in (\ref{887268439202}). We consider the set that ${Q}_{-L{\varepsilon}^\alpha}=\{v\in\mathbb{R}^n:dist(v,Q)<L{\varepsilon}^\alpha\}$.
 For any $y$ such that $|x-y|\leq \varepsilon$, on $\{\eta_T\leq L\}\cap \{{\tau}^{y}_Q\leq T \}$ we have that
 \[
 \sup_{0\leq t\leq {\tau}^{y}_Q}|X^y_{t}-X^x_{t}|\leq L\varepsilon^{\alpha},
 \]
 which implies that ${\tau}^{y}_Q\leq {\tau}^{x}_{{Q}_{-L{\varepsilon}^\alpha}}$. Similarly, for
	$Q_{L{\varepsilon}^\alpha}=\{v\in Q:dist(v,Q)>L{\varepsilon}^\alpha\}$, we have ${\tau}^{x}_{{Q}_{L{\varepsilon}^\alpha}}\leq{\tau}^{y}_Q$ on  $\{\eta_T\leq L\}\cap \{{\tau}^{y}_Q\leq T \}$.
	
	For each $Q_{-L{\varepsilon}^\alpha}$, take a bounded open set $\widetilde{Q}_{-L{\varepsilon}^\alpha}$ with smooth boundary such that $Q_{-L{\varepsilon}^\alpha}\subset \widetilde{Q}_{-L{\varepsilon}^\alpha}$ and $\widetilde{Q}_{-L{\varepsilon}^\alpha}\downarrow \overline{Q}$. It follows from  Theorem \ref{exit time quasi continuity lemma} that
	${\tau}^{0,1}_{\widetilde{Q}_{-L{\varepsilon}^\alpha}}-{\tau}^{0,1}_{Q_{L{\varepsilon}^\alpha}}\in L_C^1(\Omega_n,\mathcal{P}\circ  (X^x_{\cdot})^{-1})$ since ${\widetilde{Q}_{-L{\varepsilon}^\alpha}}$ and ${Q_{L{\varepsilon}^\alpha}}$ both satisfy the exterior ball condition.
	Then we get that
	\begin{equation}
	\label{4354536576867}
	\begin{split}
	&\hat{\mathbb{E}}[{{\tau}^{x}_Q\vee {\tau}^{x_k}_Q}-{{\tau}^{x}_Q\wedge {\tau}^{x_k}_Q}]\\
	&\leq \hat{\mathbb{E}}[({{\tau}^{x}_Q\vee {\tau}^{x_k}_Q}-{{\tau}^{x}_Q\wedge {\tau}^{x_k}_Q})I_{{\{ {\tau}^{x_k}_{{Q}}\leq  T\}}}I_{\{\eta_T\leq L\}}]+\hat{\mathbb{E}}[({{\tau}^{x}_Q\vee {\tau}^{x_k}_Q}-{{\tau}^{x}_Q\wedge {\tau}^{x_k}_Q})I_{{\{ {\tau}^{x_k}_{{Q}}\leq  T\}}}I_{\{\eta_T> L\}}]\\
	&\ \ \  \ +\hat{\mathbb{E}}[({{\tau}^{x}_Q\vee {\tau}^{x_k}_Q}-{{\tau}^{x}_Q\wedge {\tau}^{x_k}_Q})I_{{\{ {\tau}^{x_k}_{{Q}}> T\}}}]\\
	&\leq \hat{\mathbb{E}}[({{\tau}^{x}_Q\vee {\tau}^{x_k}_Q}-{{\tau}^{x}_Q\wedge {\tau}^{x_k}_Q})I_{{\{ {\tau}^{x_k}_{{Q}}\leq  T\}}}I_{\{\eta_T\leq L\}}]+\hat{\mathbb{E}}[{{\tau}^{x}_Q\vee {\tau}^{x_k}_Q}I_{\{\eta_T> L\}}]+\hat{\mathbb{E}}[{{\tau}^{x}_Q\vee {\tau}^{x_k}_Q}I_{{\{ {\tau}^{x_k}_{{Q}}> T\}}}]\\
	&=:I_1+I_2+I_3.
	\end{split}
	\end{equation}
	For $I_1$, we take $k$ large enough such that $|x_k-x|\leq \varepsilon$. Then for any $T$ and $L$, it follows that
	\begin{align*}
	I_1\leq \hat{\mathbb{E}}[({\tau}^{x}_{\widetilde{Q}_{-L{\varepsilon}^\alpha}}-{\tau}^{x}_{Q_{L{\varepsilon}^\alpha}})I_{{\{ {\tau}^{x_k}_{{Q}}\leq  T\}}}I_{\{\eta_T\leq L\}}]
	\leq \hat{\mathbb{E}}[{\tau}^{x}_{\widetilde{Q}_{-L{\varepsilon}^\alpha}}-{\tau}^{x}_{Q_{L{\varepsilon}^\alpha}}]
	= \hat{\mathbb{E}}^x_2[{\tau}^{0,1}_{\widetilde{Q}_{-L{\varepsilon}^\alpha}}-{\tau}^{0,1}_{Q_{L{\varepsilon}^\alpha}}],
	\end{align*}
which indicates that for each $\varepsilon>0$
\[
\limsup_{k\rightarrow\infty}I_1\leq \hat{\mathbb{E}}^x_2[{\tau}^{0,1}_{\widetilde{Q}_{-L{\varepsilon}^\alpha}}-{\tau}^{0,1}_{Q_{L{\varepsilon}^\alpha}}].
\]
Sending $\varepsilon\rightarrow 0$ and using Proposition \ref{downward convergence proposition} and   Theorem \ref{exit times equal lemma}, we get that
\[
\limsup_{k\rightarrow\infty}I_1\leq \hat{\mathbb{E}}^x_2[{\tau}^{0,1}_{{{\overline{Q}}}}-{\tau}^{0,1}_{Q}]=\hat{\mathbb{E}}[{\tau}^{x}_{{{\overline{Q}}}}-{\tau}^{x}_{Q}]= 0.
\]
	For any $\delta>0$, by Lemma \ref{square stopping time lemma} and Proposition 19 in \cite{DHP}  we can first take $T$ large enough such that $I_3\leq \delta$ and then take $L$ large enough such that $I_2\leq \delta$ for each $k$. Now letting $k\rightarrow \infty$ in  (\ref{4354536576867}), we get that
	$$
	\limsup_{k\rightarrow\infty}\hat{\mathbb{E}}[{{\tau}^{x}_Q\vee {\tau}^{x_k}_Q}-{{\tau}^{x}_Q\wedge {\tau}^{x_k}_Q}]\leq 2\delta.
	$$
	Sending $\delta\rightarrow 0$ and we get the desired result.
\end{proof}

\section{Application to probabilistic representations for PDEs}
This section is devoted to studying the relationship between SDEs driven by generalized $G$-Brownian motion and fully nonlinear elliptic equations.
In fact, with the help of the results of the previous sections, we shall introduce  a stochastic representation for a class of fully nonlinear elliptic equations with Dirichlet boundary.

The following results are important for our future discussion. First, we shall extend the
Theorem \ref{extended BM strongmarkov1} to a more general case.
\begin{theorem}\label{extended strong markov theorem0}
	Let $\tau$ be an optional time satisfying assumption (H).
Then for each  $Y\in  {L}_C^1(\Omega_n,\mathcal{P}\circ (X^x_{\tau+\cdot})^{-1})$,
	\begin{equation}
	Y(X^x_{\tau+\cdot})\in L^{1,\tau+}_C(\Omega_d)\ \ \ \  \text{and}\ \ \ \  \hat{\mathbb{E}}_{\tau+}[Y(X^x_{\tau+\cdot})]=\hat{\mathbb{E}}[ Y(X^y_{\cdot})]_{y=X^x_{\tau}}.
	\end{equation}
	Moreover, if $\tau$ is also a stopping time, we have $	Y(X^x_{\tau+\cdot})\in L^{1,\tau}_C(\Omega_d)$.
\end{theorem}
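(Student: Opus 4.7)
The plan is to reduce the statement to the case $Y\in C_b(\Omega_n)$ already handled by Theorem \ref{extended BM strongmarkov1}, by approximation inside the completion $L^1_C(\Omega_n,\mathcal{P}\circ(X^x_{\tau+\cdot})^{-1})$. First, pick a sequence $Y_k\in C_b(\Omega_n)$ with
\[
\hat{\mathbb{E}}_{\mathcal{P}\circ(X^x_{\tau+\cdot})^{-1}}[|Y_k-Y|]\to 0.
\]
The crucial change-of-variables identity (applied for each $P\in\mathcal{P}$ and then supremum taken) gives
\[
\hat{\mathbb{E}}[|Y_k(X^x_{\tau+\cdot})-Y_l(X^x_{\tau+\cdot})|]
=\sup_{P\in\mathcal{P}}E_P[|Y_k-Y_l|(X^x_{\tau+\cdot})]
=\hat{\mathbb{E}}_{\mathcal{P}\circ(X^x_{\tau+\cdot})^{-1}}[|Y_k-Y_l|],
\]
so $(Y_k(X^x_{\tau+\cdot}))_k$ is Cauchy in $L^1_C(\Omega_d)$. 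Since Theorem \ref{extended BM strongmarkov1} places each term in the closed subspace $L^{1,\tau+}_C(\Omega_d)$, the $L^1_C$-limit lies there as well. Extracting a subsequence that converges quasi-surely under $\mathcal{P}\circ(X^x_{\tau+\cdot})^{-1}$, one sees that $Y_k\circ X^x_{\tau+\cdot}\to Y\circ X^x_{\tau+\cdot}$ on the complement of a $\mathcal{P}$-polar set, which identifies the limit q.s.\ as $Y(X^x_{\tau+\cdot})$ and yields $Y(X^x_{\tau+\cdot})\in L^{1,\tau+}_C(\Omega_d)$.

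For the strong-Markov identity, I would invoke the contraction property of $\hat{\mathbb{E}}_{\tau+}$ on $L^1_C(\Omega_d)$ to pass to the limit on the left in
\[
\hat{\mathbb{E}}_{\tau+}[Y_k(X^x_{\tau+\cdot})]=\hat{\mathbb{E}}[Y_k(X^y_{\cdot})]_{y=X^x_{\tau}},
\]
obtaining $\hat{\mathbb{E}}_{\tau+}[Y_k(X^x_{\tau+\cdot})]\to\hat{\mathbb{E}}_{\tau+}[Y(X^x_{\tau+\cdot})]$ in $L^1_C(\Omega_d)$. For the right side, set $\varphi_k(y):=\hat{\mathbb{E}}[Y_k(X^y_{\cdot})]$. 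Since $\hat{\mathbb{E}}[|\varphi_k(X^x_\tau)-\varphi_l(X^x_\tau)|]\le\hat{\mathbb{E}}[|Y_k-Y_l|(X^x_{\tau+\cdot})]\to0$, the sequence $(\varphi_k(X^x_\tau))_k$ is Cauchy in $L^1_C(\Omega_d)$, and its limit is exactly what one should declare to be $\hat{\mathbb{E}}[Y(X^y_{\cdot})]_{y=X^x_\tau}$. Passing to the limit in the displayed identity gives the desired formula. The stopping-time refinement $Y(X^x_{\tau+\cdot})\in L^{1,\tau}_C(\Omega_d)$ follows identically, using the second part of Theorem \ref{extended BM strongmarkov1} and the fact that $L^{1,\tau}_C(\Omega_d)$ is closed under the $L^1_C$-norm.

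The main obstacle is conceptual rather than computational: the notation $\hat{\mathbb{E}}[Y(X^y_{\cdot})]_{y=X^x_\tau}$ is not a priori well-defined when $Y$ is specified only up to $\mathcal{P}\circ(X^x_{\tau+\cdot})^{-1}$-polar sets, because $Y(X^y_{\cdot})$ need not lie in any natural nonlinear $L^1$ space for an arbitrary fixed $y$. Interpreting the right-hand side as the $L^1_C(\Omega_d)$-limit of $\varphi_k(X^x_\tau)$ (that is, extending the strong-Markov functional $y\mapsto\hat{\mathbb{E}}[\,\cdot\,(X^y_\cdot)]$ by continuity in the aggregated sense) resolves this, and is compatible with the formula already established on $C_b(\Omega_n)$. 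Once this point is fixed, the whole proof is a standard density argument driven by the identity displayed above.
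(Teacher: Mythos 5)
Your argument for the membership claims is correct and in fact a little slicker than the paper's: since $L^1_C(\Omega_n,\mathcal{P}\circ(X^x_{\tau+\cdot})^{-1})$ is by definition the completion of $C_b(\Omega_n)$, the change-of-variables identity $\hat{\mathbb{E}}[|Y_k-Y_l|(X^x_{\tau+\cdot})]=\hat{\mathbb{E}}_{\mathcal{P}\circ(X^x_{\tau+\cdot})^{-1}}[|Y_k-Y_l|]$ plus Theorem \ref{extended BM strongmarkov1} and completeness of $L^{1,\tau+}_C(\Omega_d)$ (resp. $L^{1,\tau}_C(\Omega_d)$) do give $Y(X^x_{\tau+\cdot})\in L^{1,\tau+}_C(\Omega_d)$ (resp. $L^{1,\tau}_C(\Omega_d)$), and continuity of $\hat{\mathbb{E}}_{\tau+}$ handles the left-hand side of the identity.

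The gap is in the right-hand side, and you have named it yourself but then resolved it by changing the statement rather than proving it. In the theorem, $\hat{\mathbb{E}}[Y(X^y_{\cdot})]_{y=X^x_{\tau}}$ means the pointwise-defined map $y\mapsto\hat{\mathbb{E}}[Y(X^y_{\cdot})]$ composed with $X^x_{\tau}$; this concrete reading is exactly what is used later (in Theorem \ref{brownian motion DDP for bounded domain} it becomes $u(X^x_{\tau})$ for the value function $u$). Declaring the right-hand side to be the $L^1_C$-limit of $\varphi_k(X^x_\tau)$ proves a weaker, redefined statement; to close the gap you must show this limit coincides q.s.\ with the pointwise composition, i.e.\ that $\hat{\mathbb{E}}\bigl[\hat{\mathbb{E}}[|Y_k-Y|(X^y_{\cdot})]_{y=X^x_{\tau}}\bigr]\rightarrow 0$. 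But $|Y_k-Y|$ is neither continuous nor semicontinuous, so the tower identity needed to convert this into $\hat{\mathbb{E}}[|Y_k-Y|(X^x_{\tau+\cdot})]$ is an instance of the very theorem being proved — your density argument is circular at precisely this point. The paper avoids the circle differently: it truncates $Y$, uses quasi-continuity of $Y$ under the image capacity to pick a closed set $D$ with $c((X^x_{\tau+\cdot})^{-1}(D^c))\leq\varepsilon$ on which $Y$ is continuous, Tietze-extends to $\widetilde{Y}\in C_b(\Omega_n)$, and then exploits the fact that Theorem \ref{extended BM strongmarkov1} already applies to the lower semicontinuous function $I_{D^c}$ to control the error on the right-hand side pointwise (namely $|Y-\widetilde{Y}|\leq 2C_YI_{D^c}$ and $\hat{\mathbb{E}}[\hat{\mathbb{E}}[I_{D^c}(X^y_\cdot)]_{y=X^x_\tau}]=\hat{\mathbb{E}}[I_{D^c}(X^x_{\tau+\cdot})]\leq\varepsilon$); the unbounded case is then handled by a monotone approximation $\bar{Y}^k\uparrow|Y|I_{\{|Y|>N\}}$ together with Proposition \ref{downward convergence proposition}. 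Some argument of this type (using quasi-continuity and the semicontinuous case of Theorem \ref{extended BM strongmarkov1}, not just norm density of $C_b$) is needed for your proof to yield the theorem as stated.
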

\begin{proof} The proof shall be divided into the following three steps.

{\it 1 Bounded case.}
Suppose $Y$ is bounded by some constant $C_Y$. Then by Theorem \ref{LG characteriazation theorem}, for any $\varepsilon>0$ we can pick a closed set $D$  in $\Omega_n$ such that $c(O)\leq \varepsilon$
and $Y|_{D}$ is continuous, where $O:=(X^x_{\tau+\cdot})^{-1}(D^c)$.
By Tietze extension theorem, there is a continuous function  $\widetilde{Y}$ on $\Omega_n$  such that $\widetilde{Y}=Y$ on $D$ and $|\widetilde{Y}|\leq C_Y$.
Recalling Theorem \ref{extended BM strongmarkov1}, we get that
	$$\hat{\mathbb{E}}_{\tau+}[\widetilde{Y}(X^x_{\tau+\cdot})]=\hat{\mathbb{E}}[\widetilde{Y} (X^y_{\cdot})]_{y=X^x_{\tau}}.$$
For the left side, it holds that
	\begin{align*}
\hat{\mathbb{E}}[|\widetilde{Y}(X^x_{\tau+\cdot})-Y(X^x_{\tau+\cdot})|]
	\leq \hat{\mathbb{E}}[|\widetilde{Y}-{Y}|(X^x_{\tau+\cdot}) I_{O^c}]+\hat{\mathbb{E}}[|\widetilde{Y}-{Y}|(X^x_{\tau+\cdot}) I_{O}]
	\leq 2C_Yc(O)\leq 2C_Y\varepsilon.
	\end{align*}
For the right side, since $|Y-\widetilde{Y}|\leq 2C_YI_{D^c}$ and ${D^c}$ is open in $\Omega_n$,
applying Theorem \ref{extended BM strongmarkov1} yields
	\begin{align*}
	\hat{\mathbb{E}}[|\hat{\mathbb{E}}[\widetilde{Y} (X^y_{\cdot})]_{y=X^x_{\tau}}-\hat{\mathbb{E}}[{Y} (X^y_{\cdot})]_{y=X^x_{\tau}}|]
 \leq 2C_Y\hat{\mathbb{E}}[\hat{\mathbb{E}}[ I_{{D^c}}(X^y_{\cdot})]_{y=X^x_{\tau}}]
	=2C_Y\hat{\mathbb{E}}[\hat{\mathbb{E}}_{\tau+}[ I_{{D^c}}(X^x_{\tau+\cdot})]]
=2C_Y\hat{\mathbb{E}}[ I_{{D^c}}(X^x_{\tau+\cdot})],
	\end{align*}
which implies that
\begin{align*}
\hat{\mathbb{E}}[|\hat{\mathbb{E}}[\widetilde{Y} (X^y_{\cdot})]_{y=X^x_{\tau}}-\hat{\mathbb{E}}[{Y} (X^y_{\cdot})]_{y=X^x_{\tau}}|]\leq 2C_Y\hat{\mathbb{E}}[ I_{{D^c}}(X^x_{\tau+\cdot})I_{{O^c}}]+2C_Y\hat{\mathbb{E}}[ I_{{D^c}}(X^x_{\tau+\cdot})I_{{O}}]\leq 2C_Y\varepsilon.
\end{align*}
Sine $\varepsilon$ can be arbitrarily small, it follows that $Y(X^x_{\tau+\cdot})\in L^{1,\tau+}_C(\Omega_d)$ and
	$$\hat{\mathbb{E}}_{\tau+}[{Y}(X^x_{\tau+\cdot})]=\hat{\mathbb{E}}[{Y} (X^y_{\cdot})]_{y=X^x_{\tau}}.$$
	
{\it  2 Unbounded case.}  Define $Y_N=(Y\wedge N)\vee(-N)$ for each $N\geq 1$. By Step 1, we have
	\begin{equation}\label{234353546545}
	\hat{\mathbb{E}}_{\tau+}[{Y_N}(X^x_{\tau+\cdot})]=\hat{\mathbb{E}}[{Y_N} (X^y_{\cdot})]_{y=X^x_{\tau}}.
	\end{equation}
For the left side, we have that
	\begin{align*}
	\hat{\mathbb{E}}[|{Y_N}(X^x_{\tau+\cdot})-{Y}(X^x_{\tau+\cdot})|]
	=\hat{\mathbb{E}}[|{Y_N}-Y|(X^x_{\tau+\cdot})]
	\leq \hat{\mathbb{E}}[(|{Y}|I_{|Y|>N})(X^x_{\tau+\cdot})]	\rightarrow 0,\ \text{as}\ N\rightarrow \infty,
	\end{align*}
which indicates that $Y(X^x_{\tau+\cdot})\in   L^{1,\tau+}_C(\Omega_d)$.
For the right side, it holds that
	\begin{align*}
	\hat{\mathbb{E}}[|\hat{\mathbb{E}}[{Y_N} (X^y_{\cdot})]_{y=X^x_{\tau}}-\hat{\mathbb{E}}[{Y} (X^y_{\cdot})]_{y=X^x_{\tau}}|]
	\leq \hat{\mathbb{E}}[\hat{\mathbb{E}}[(|{Y}|I_{|Y|>N})(X^y_{\cdot})]_{y=X^x_{\tau}}].
	\end{align*}
We claim that for each $N\geq 1$
\begin{align}\label{myq3}
\hat{\mathbb{E}}[(|{Y}|I_{|Y|>N})(X^y_{\cdot})]_{y=X^x_{\tau}}=\hat{\mathbb{E}}_{\tau+}[(|{Y}|I_{|Y|>N})(X^x_{\tau+\cdot})],
\end{align}
	whose proof will be justified in Step 3. Thus, we derive that
\[
\hat{\mathbb{E}}[|\hat{\mathbb{E}}[{Y_N} (X^y_{\cdot})]_{y=X^x_{\tau}}-\hat{\mathbb{E}}[{Y} (X^y_{\cdot})]_{y=X^x_{\tau}}|]\leq
\hat{\mathbb{E}}[(|{Y}|I_{|Y|>N})(X^x_{\tau+\cdot})]\rightarrow 0,\ \text{as}\ N\rightarrow \infty.
\]
Consequently, letting $N\rightarrow\infty$ in (\ref{234353546545}) yields the desired result.

{\it 3 The proof of equation \eqref{myq3}.} For each $I_{\{|y|>N\}}$,
we can choose a sequence $\varphi_k\in C_b(\mathbb{R}^n)$ such that $\varphi_k\uparrow I_{\{|y|>N\}}$. Define
$\bar{Y}^k=(|Y|\wedge k)\varphi_k(Y)$ and it is obvious that $\bar{Y}^k\uparrow |{Y}|I_{\{|Y|>N\}}$. Then Step 1 and Proposition \ref{downward convergence proposition},
Proposition 3.25 (iv) in \cite{HJL} yield (\ref{myq3}). The proof is complete.
\end{proof}

\begin{corollary}\label{belong to LC1 corollary}
If  $Y\in  {L}_C^1(\Omega_n,\mathcal{P}\circ (X^x_{\cdot})^{-1})$,
then \begin{equation}\label{98876444556}	Y(X^x_{\cdot})\in L^{1}_C(\Omega_d).\end{equation} In particular,  $\tau_Q^x,\tau_{\overline{Q}}^x\in  L^{1}_C(\Omega_d).$
	\end{corollary}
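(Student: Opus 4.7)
The plan is to derive \eqref{98876444556} as an immediate specialization of Theorem \ref{extended strong markov theorem0} to the trivial stopping time $\tau\equiv 0$. This $\tau$ plainly satisfies assumption (H), and gives $X^x_{\tau+\cdot}=X^x_\cdot$ together with $\mathcal{P}\circ(X^x_{\tau+\cdot})^{-1}=\mathcal{P}\circ(X^x_\cdot)^{-1}$. Hence, for any $Y\in L_C^1(\Omega_n,\mathcal{P}\circ (X^x_\cdot)^{-1})$, Theorem \ref{extended strong markov theorem0} directly yields $Y(X^x_\cdot)\in L_C^{1,0}(\Omega_d)$. Since the canonical filtration satisfies $\mathcal{F}_0=\sigma\{B_0\}=\{\emptyset,\Omega_d\}$, the generating family $L_C^{0,1,0}(\Omega_d)$ consists of simple sums $\xi I_{\Omega_d}=\xi$ with $\xi\in L_C^1(\Omega_d)$, so $L_C^{1,0}(\Omega_d)=L_C^1(\Omega_d)$ and \eqref{98876444556} follows.

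For the \emph{in particular} part, the plan is to apply Theorem \ref{exit time quasi continuity lemma} to the bounded singleton $K=\{x\}\subset\mathbb{R}^n$, which gives $\tau_Q^{0,1},\tau_{\overline{Q}}^{0,1}\in L_C^1(\Omega_n,\mathcal{P}_2^{\{x\}})$. Unwinding the definition $\mathcal{P}_2^A=\bigcup_{z\in A}\mathcal{P}\circ(X^z_\cdot)^{-1}$ at $A=\{x\}$ gives $\mathcal{P}_2^{\{x\}}=\mathcal{P}\circ(X^x_\cdot)^{-1}$, so these exit times are admissible inputs $Y$ for the first assertion of the corollary. The identity $\tau_D^x(\omega)=\tau_D^{0,1}(X^x_\cdot(\omega))$, noted just before Lemma \ref{semi-continuity of exit time}, then provides $\tau_Q^x=\tau_Q^{0,1}(X^x_\cdot)$ and $\tau_{\overline{Q}}^x=\tau_{\overline{Q}}^{0,1}(X^x_\cdot)$, and applying \eqref{98876444556} to each completes the argument.

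I do not anticipate a serious obstacle here: both ingredients, Theorem \ref{extended strong markov theorem0} and Theorem \ref{exit time quasi continuity lemma}, have already been established, and the corollary is essentially a packaging step. The only verification requiring any care is the identification $L_C^{1,0}(\Omega_d)=L_C^1(\Omega_d)$, which I would record explicitly: because $\mathcal{F}_0$ is trivial, the $\mathcal{F}_0$-partitions in the definition of $L_C^{0,1,0}(\Omega_d)$ collapse to $\{\Omega_d\}$, and the completion in $\|\cdot\|_1$ agrees with $L_C^1(\Omega_d)$.
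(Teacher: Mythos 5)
Your proposal is correct and follows essentially the same route as the paper: take $\tau\equiv 0$ in Theorem \ref{extended strong markov theorem0} for \eqref{98876444556}, and invoke Theorem \ref{exit time quasi continuity lemma} with $K=\{x\}$ together with $\tau_D^x=\tau_D^{0,1}(X^x_\cdot)$ for the ``in particular'' part. The only addition is your explicit identification $L^{1,0}_C(\Omega_d)=L^1_C(\Omega_d)$, which the paper leaves implicit; note only that on $\Omega_d=C([0,\infty);\mathbb{R}^d)$ the $\sigma$-algebra $\mathcal{F}_0=\sigma\{B_0\}$ is trivial merely up to polar sets (since $B_0=0$ q.s., not identically), which still yields the identification because random variables equal q.s. are identified.
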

\begin{proof}
	Taking $\tau\equiv0$ in Theorem \ref{extended strong markov theorem0}, we get (\ref{98876444556}). From Theorem \ref{exit time quasi continuity lemma}, we have  ${\tau}_Q^{0,1}, {\tau}_{\overline{Q}}^{0,1}\in L_C^1(\Omega_n,\mathcal{P}\circ (X^x_{\cdot})^{-1})$, which ends the proof.
\end{proof}
\begin{proposition}\label{tauQ continuous lemma}
Let $\tau$ be an optional time  such that  $ \tau\leq {\tau}_{Q}^{x}$, q.s. Then
\begin{equation}\label{9897687643435}
{{\tau}_{Q}^{0,1}}={{\tau}_{\overline{Q}}^{0,1}},\ \ \ \ \mathcal{P}\circ (X^x_{\tau+\cdot})^{-1}\text{-q.s.}
\end{equation}
Moreover, ${{\tau}_{Q}^{0,1}}$ and ${{\tau}_{\overline{Q}}^{0,1}}$ both belong to the space  ${L}_C^1(\Omega_n, \mathcal{P}\circ (X^x_{\tau+\cdot})^{-1})$.
\end{proposition}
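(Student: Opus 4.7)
The plan is to carry through a two-part argument resting on the extended strong Markov property (Theorem \ref{extended strong markov theorem0}) together with the observation that $X^x_{\tau}\in\overline{Q}$ q.s.: indeed, $\tau\le\tau_Q^x$ forces $X^x_s\in Q\subset\overline{Q}$ for all $s<\tau_Q^x$, and path-continuity places $X^x_{\tau_Q^x}$ in $\overline{Q}$ as well.

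For \eqref{9897687643435}, it suffices to show $\hat{\mathbb{E}}[\delta_t(X^x_{\tau+\cdot})]=0$ for each $t>0$, where $\delta_t:=(\tau_{\overline{Q}}^{0,1}-\tau_Q^{0,1})\wedge t$, since $t$ is arbitrary. By Lemma \ref{semi-continuity of exit time}, $\delta_t$ is nonnegative, bounded, and upper semi-continuous on $\Omega_n$, so one can choose continuous $f_m:\Omega_n\to[0,2t]$ with $f_m\downarrow\delta_t$. Applying Theorem \ref{extended strong markov theorem0} to $f_m\in C_b(\Omega_n)$ yields
\[
\hat{\mathbb{E}}[f_m(X^x_{\tau+\cdot})]=\hat{\mathbb{E}}[\varphi_m(X^x_\tau)],\qquad \varphi_m(y):=\hat{\mathbb{E}}[f_m(X^y_\cdot)],
\]
and Lemma \ref{SDE continuity wrt y} makes each $\varphi_m$ continuous on $\mathbb{R}^n$. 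For every $y\in\overline{Q}$, Proposition \ref{downward convergence proposition}(2)(a) together with Theorem \ref{exit times equal lemma} gives $\varphi_m(y)\downarrow\hat{\mathbb{E}}[\delta_t(X^y_\cdot)]=0$. Dini's theorem on the compact set $\overline{Q}$ then upgrades this to uniform convergence $\sup_{y\in\overline{Q}}\varphi_m(y)\to 0$, and since $X^x_\tau\in\overline{Q}$ q.s., $\hat{\mathbb{E}}[\delta_t(X^x_{\tau+\cdot})]\le\sup_{\overline{Q}}\varphi_m\to 0$.

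For the $L_C^1$ membership, I would verify the two conditions of Theorem \ref{LG characteriazation theorem}. The structural inequality
\[
\hat{\mathbb{E}}[Y(X^x_{\tau+\cdot})]\le\sup_{y\in\overline{Q}}\hat{\mathbb{E}}[Y(X^y_\cdot)]=\hat{\mathbb{E}}^{\overline{Q}}_2[Y]\qquad\text{for all }Y\in C_b(\Omega_n),\ Y\ge 0
\]
follows immediately from Theorem \ref{extended strong markov theorem0} and $X^x_\tau\in\overline{Q}$ q.s. Approximating indicators of open sets from below by continuous functions and using Proposition \ref{downward convergence proposition}(1) extends this to $c_{\mathcal{P}\circ(X^x_{\tau+\cdot})^{-1}}(O)\le c^{\overline{Q}}_2(O)$ for every open $O\subset\Omega_n$. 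Since Theorem \ref{exit time quasi continuity lemma} applied with $K=\overline{Q}$ already furnishes, for every $\varepsilon>0$, a closed set $F$ with $c^{\overline{Q}}_2(F^c)<\varepsilon$ on which $\tau_{\overline{Q}}^{0,1}$ is continuous, the same $F$ certifies quasi-continuity under $\mathcal{P}\circ(X^x_{\tau+\cdot})^{-1}$. For the tail condition, I would approximate $(\tau_{\overline{Q}}^{0,1})^2\wedge M$ from above by continuous $g_k$, use Theorem \ref{extended strong markov theorem0} to bound $\hat{\mathbb{E}}[g_k(X^x_{\tau+\cdot})]\le\sup_{y\in\overline{Q}}\hat{\mathbb{E}}[g_k(X^y_\cdot)]$, pass to the limit first in $k$ (via Proposition \ref{downward convergence proposition}(2)(a) pointwise together with a compactness argument described below) and then in $M$ (via Proposition \ref{downward convergence proposition}(1)) to obtain $\hat{\mathbb{E}}[(\tau_{\overline{Q}}^{0,1}(X^x_{\tau+\cdot}))^2]\le C$ from Lemma \ref{square stopping time lemma}; Chebyshev then yields the tail. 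The case of $\tau_Q^{0,1}$ is identical because it equals $\tau_{\overline{Q}}^{0,1}$ $\mathcal{P}\circ(X^x_{\tau+\cdot})^{-1}$-q.s.\ by Part 1.

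The main obstacle is the second-moment estimate in Part 2: Theorem \ref{extended strong markov theorem0} is stated for lower semi-continuous $Y$, whereas $(\tau_{\overline{Q}}^{0,1})^2$ is only upper semi-continuous and unbounded, forcing the double approximation. Interchanging the supremum over $\overline{Q}$ with the downward limit in $k$ cannot be done via Dini directly, because the pointwise limit $\psi_M(y):=\hat{\mathbb{E}}[(\tau_{\overline{Q}}^y)^2\wedge M]$ is only upper semi-continuous; instead I would attain the supremum $\sup_{\overline{Q}}\varphi_{k,M}$ at some $y_k\in\overline{Q}$ by continuity and compactness, extract a convergent subsequence $y_{k_j}\to y_*\in\overline{Q}$, and, for each fixed $n$, combine the monotonicity $\varphi_{k_j,M}(y_{k_j})\le\varphi_{n,M}(y_{k_j})$ with continuity of $\varphi_{n,M}$ to obtain $\limsup_j\sup_{\overline{Q}}\varphi_{k_j,M}\le\varphi_{n,M}(y_*)$; sending $n\to\infty$ gives the desired bound $\psi_M(y_*)\le\sup_{\overline{Q}}\psi_M\le C$.
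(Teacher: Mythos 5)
Your proof is correct, and its first half (the identity \eqref{9897687643435}) is essentially the paper's own argument: the same functions $\delta_t$, $f_m$, $\varphi_m$, the strong Markov property, downward convergence on the weakly compact induced families, and Dini on $\overline{Q}$, using $X^x_\tau\in\overline{Q}$ q.s. (as you note, the case $x\notin\overline{Q}$ is trivial since then $\tau=\tau^x_Q=0$). Where you genuinely diverge is in the membership in ${L}_C^1(\Omega_n,\mathcal{P}\circ (X^x_{\tau+\cdot})^{-1})$. For the quasi-continuity transfer the paper applies Theorem \ref{extended BM strongmarkov1} directly to $I_{O}$, which is lower semi-continuous and bounded, so the bound $c\bigl((X^x_{\tau+\cdot})^{-1}(O)\bigr)=\hat{\mathbb{E}}[\hat{\mathbb{E}}[I_{O}(X^y_{\cdot})]_{y=X^x_{\tau}}]\leq c^{\overline{Q}}_2(O)\leq\varepsilon$ comes in one line, whereas you rebuild it by approximating $I_O$ from below by continuous functions and invoking Proposition \ref{downward convergence proposition}(1); both are fine. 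The real difference is the integrability/tail condition: the paper simply observes the pathwise domination ${\tau}_{Q}^{0,1}(X^x_{\tau+\cdot})\leq {\tau}_{\overline{Q}}^{0,1}(X^x_{\tau+\cdot})\leq {\tau}^x_{\overline{Q}}$ (valid because $\tau\leq\tau^x_Q$, so the shifted path leaves $\overline{Q}$ no later than ${\tau}^x_{\overline{Q}}$), after which Lemma \ref{square stopping time lemma} applied on the original space gives the tail estimate immediately, with no need for the strong Markov property at this stage. Your route instead reproves a uniform second-moment bound for the induced law through a double approximation (truncation in $M$, continuous majorants $g_k$) plus the compactness/monotonicity interchange you describe; that argument is sound — the interchange via attaining the supremum at $y_k$, extracting $y_{k_j}\to y_*$ and using monotonicity in $k$ together with continuity of each $\varphi_{n,M}$ is a correct substitute for Dini — but it is considerably heavier than necessary, and the obstacle you flag (that Theorem \ref{extended strong markov theorem0} does not apply to the unbounded, merely upper semi-continuous $({\tau}_{\overline{Q}}^{0,1})^2$) disappears entirely once the pathwise bound is noticed.
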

\begin{proof}
Employing the symbols in the proof of  Theorem \ref{exit times equal lemma},  we can get that
\begin{equation}\label{992987755576}
\hat{\mathbb{E}}[\delta_t(X^x_{\tau+\cdot})]\leq \hat{\mathbb{E}}[f_m(X^x_{\tau+\cdot})]=\mathbb{\hat{E}}[\mathbb{\hat{E}}[f_m(X^y_{\cdot})]_{y=X^x_{{\tau}}}]
\end{equation}
Note that $\hat{\mathbb{E}}[\delta_t(X^y_{\cdot})]=0$  for each $y\in \overline{Q}$.
 We can repeat the analysis  in the proof of Theorem \ref{exit times equal lemma} to obtain the rightside of (\ref{992987755576}) converges to $0$ and this indicates that the equation (\ref{9897687643435}) holds.

Recalling Theorem \ref{exit time quasi continuity lemma},  for any fixed $\varepsilon>0$,  there exists an open set
$O\subset \Omega_n$ such that $c^{\overline{Q}}_2(O)\leq \varepsilon$ and ${{\tau}_{Q}^{0,1}},{{\tau}_{\overline{Q}}^{0,1}}$ are continuous on  $O^c$.
 Note that $c^{y}_2(O)\leq c^{\overline{Q}}_2(O)\leq \varepsilon$ for each $y\in \overline{Q}$. Then using Theorem \ref{extended BM strongmarkov1},  we have that
$$c((X^x_{\tau+\cdot})^{-1}(O))=\hat{\mathbb{E}}[I_{O}(X^x_{\tau+\cdot})]=\hat{\mathbb{E}}[\hat{\mathbb{E}}[I_{O}(X^y_{\cdot})]_{y=X^x_{\tau}}]\leq \varepsilon,$$
which together with
${\tau}_{Q}^{0,1}(X^x_{\tau+\cdot})\leq   {\tau}_{\overline{Q}}^{0,1}(X^x_{\tau+\cdot})\leq {\tau}^x_{\overline{Q}}$
imply that  ${{\tau}_{Q}^{0,1}},{{\tau}_{\overline{Q}}^{0,1}}\in {L}_C^1(\Omega_n, \mathcal{P}\circ (X^x_{\tau+\cdot})^{-1}).$
\end{proof}

The following theorem plays a key role in proving the probabilistic representations.
\begin{theorem}\label{brownian motion DDP for bounded domain}
Assume $\varphi\in C(\partial Q)$ and $f\in C(\overline{Q})$. Let $u(x):=\hat{\mathbb{E}}[\varphi(X^x_{{{\tau}_{Q}^x}})-\int_0^{{{{\tau}_{Q}^x}}}f(X^x_s)ds]$. Then for any optional time $\tau\leq {{\tau}_{Q}^x}$ q.s., we have
\begin{equation}\label{333333}
u(x)=\hat{\mathbb{E}}[u(X^x_{\tau})-\int_0^{\tau}f(X^x_s)ds].
\end{equation}
\end{theorem}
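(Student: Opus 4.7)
The plan is to reduce \eqref{333333} to a direct application of the strong Markov property in Theorem \ref{extended strong markov theorem0}, after recognising $u$ as the expectation of a suitable path functional on $\Omega_n$. Define $\hat{\tau}_Q(\omega'):=\inf\{t\geq 0:\omega'_t\in Q^c\}$ on $\Omega_n$ and set
\[
Y(\omega'):=\varphi(\omega'_{\hat{\tau}_Q(\omega')})-\int_{0}^{\hat{\tau}_Q(\omega')}f(\omega'_s)\,ds,
\]
with the convention $Y(\omega')=\varphi(x_0)$ for some fixed reference if $\hat{\tau}_Q(\omega')=\infty$ (this event is negligible under the induced laws thanks to Lemma \ref{stopping time lemma}). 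Then $\tau_Q^x(\omega)=\hat{\tau}_Q(X^x_{\cdot}(\omega))$ and $u(x)=\hat{\mathbb{E}}[Y(X^x_{\cdot})]$.

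The crucial pathwise identity is
\[
\hat{\tau}_Q(X^x_{\tau+\cdot})=\tau_Q^x-\tau,\qquad \text{q.s. on }\{\tau\leq\tau_Q^x\},
\]
which follows from the continuity of $X^x$ together with the fact that $Q$ is open (so $X^x_{\tau_Q^x}\in\partial Q\subset Q^c$, covering the equality case $\tau=\tau_Q^x$). Substituting gives
\[
Y(X^x_{\tau+\cdot})=\varphi(X^x_{\tau_Q^x})-\int_{\tau}^{\tau_Q^x}f(X^x_s)\,ds,
\]
so $u(x)=\hat{\mathbb{E}}\bigl[-\int_0^\tau f(X^x_s)\,ds+Y(X^x_{\tau+\cdot})\bigr]$. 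The next step is to invoke the tower property and Theorem \ref{extended strong markov theorem0}:
\[
\hat{\mathbb{E}}_{\tau+}[Y(X^x_{\tau+\cdot})]=\hat{\mathbb{E}}[Y(X^y_{\cdot})]\bigr|_{y=X^x_\tau}=u(X^x_\tau),
\]
and then pull the $\mathcal{F}_{\tau+}$-measurable term $-\int_0^\tau f(X^x_s)\,ds$ outside the conditional expectation (linearity in the known part) and take the outer $\hat{\mathbb{E}}$ to obtain \eqref{333333}.

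The main technical point is to verify the integrability hypothesis of Theorem \ref{extended strong markov theorem0}, namely $Y\in L_C^1(\Omega_n,\mathcal{P}\circ(X^x_{\tau+\cdot})^{-1})$. Here Proposition \ref{tauQ continuous lemma} is exactly what is needed: it delivers $\hat{\tau}_Q=\hat{\tau}_{\overline{Q}}$ q.s.\ under this law and places both in $L_C^1$, so they are quasi-continuous under the shifted capacity; combining with continuity of $\varphi$ on $\partial Q$ and of $f$ on $\overline{Q}$, together with the uniform $L^2$-bound on $\tau_{\overline{Q}}^{0,1}$ from Lemma \ref{square stopping time lemma} (applied with starting points in $\overline{Q}$ to control tails), yields the required quasi-continuity and uniform integrability of $Y$. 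The $\int_0^\tau f(X^x_s)\,ds$ term is handled by noting that $f$ is bounded on $\overline{Q}$, $\tau\leq\tau_Q^x$, and $\tau_Q^x\in L_C^1(\Omega_d)$ by Corollary \ref{belong to LC1 corollary}, so the linearity step is justified. The one subtlety I would double-check is the precise form of conditional linearity with respect to $\hat{\mathbb{E}}_{\tau+}$ for such random variables, but this reduces to the standard $G$-expectation machinery already invoked in the previous section.
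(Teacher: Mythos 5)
Your proposal is correct and follows essentially the same route as the paper: it represents the quantity of interest as $Y(X^x_{\tau+\cdot})$ for the canonical-path functional $Y$ on $\Omega_n$, verifies $Y\in L_C^1(\Omega_n,\mathcal{P}\circ(X^x_{\tau+\cdot})^{-1})$ via Proposition \ref{tauQ continuous lemma} together with the second-moment bound of Lemma \ref{square stopping time lemma} (the paper does this by truncating the integral at level $k$), and then applies Theorem \ref{extended strong markov theorem0}, the tower property and the pulling-out of the $\mathcal{F}_{\tau+}$-measurable term $\int_0^{\tau}f(X^x_s)\,ds$. Your explicit pathwise identity $\hat{\tau}_Q(X^x_{\tau+\cdot})=\tau_Q^x-\tau$ is exactly what the paper uses implicitly, so the two arguments coincide in substance.
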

\begin{proof}
By Proposition \ref{tauQ continuous lemma}, for any $\varepsilon>0$ we can choose
 a closed set $D\subset \Omega_n$  such that $c((X^x_{\tau+\cdot})^{-1}(D^c))\leq \varepsilon$ and
${{\tau}_{Q}^{0,1}}$ is continuous on $D$.
Then $Y:=\varphi(B'_{{{\tau}_{Q}^{0,1}}})-\int_{0}^{{{{\tau}_{Q}^{0,1}}}}f(B'_s)ds$ is also continuous on $D$, where $B'$ is the canonical process on $\Omega_n$.
Hence, $Y$ is $\mathcal{P}\circ (X^x_{\tau+\cdot})^{-1}$-quasi-continuous on $\Omega_n$.
For each $k\geq 1$, set $Y_k:=\varphi(B'_{{{\tau}_{Q}^{0,1}}})-\int_{0}^{{{{\tau}_{Q}^{0,1}}}\wedge k}f(B'_s)ds$, which is
 also $\mathcal{P}\circ (X^x_{\tau+\cdot})^{-1}$-quasi-continuous on $\Omega_n$. Thus $Y^k$ belongs to ${L}_C^{1}(\Omega_n,\mathcal{P}\circ (X^x_{\tau+\cdot})^{-1})$ by Theorem \ref{LG characteriazation theorem}.

Note that for each $k\geq 1$
\begin{align*}
\hat{\mathbb{E}}[|Y-Y_k|(X^x_{\tau+\cdot})]=\hat{\mathbb{E}}[|\int_{{{{\tau}_{Q}^{0,1}}}\wedge k}^{{{{\tau}_{Q}^{0,1}}}}f(B'_s)ds|(X^x_{\tau+\cdot})]
=\hat{\mathbb{E}}[|\int_{({{{\tau}_{Q}^{x}}}-{\tau})\wedge k}^{{{{\tau}_{Q}^{x}}}-{{\tau}}}f(X^x_{{\tau}+s})ds|]
\leq C_f \hat{\mathbb{E}}[{{{{\tau}_{Q}^{x}}}-{{\tau}}}-({{{{\tau}_{Q}^{x}}}-{{\tau}}})\wedge k].
\end{align*}
Then by Lemma \ref{square stopping time lemma}, we have that\begin{align*}
\hat{\mathbb{E}}[|Y-Y_k|(X^x_{\tau+\cdot})]&\leq C_f \hat{\mathbb{E}}[({{{{\tau}_{Q}^{x}}}-{{\tau}}}- k)I_{\{{{{{\tau}_{Q}^{x}}}-{{\tau}}}>k \}}]
\leq C_f {\hat{\mathbb{E}}[{\tau}_{Q}^{x}I_{\{{\tau}_{Q}^{x}>k \}}]}
\leq C_f \frac{\hat{\mathbb{E}}[({\tau}_{Q}^{x})^2]}{k}
\rightarrow 0, \ \text{as $k\rightarrow\infty$}.
\end{align*}
This implies that  $Y\in {L}_C^{1}(\Omega_n,\mathcal{P}\circ (X^x_{\tau+\cdot})^{-1})$.

Now applying Theorem \ref{extended strong markov theorem0}, we  obtain that
\begin{equation*}
\begin{split}
\hat{\mathbb{E}}_{\tau+}[\varphi(X^x_{{{\tau}_{Q}^x}})-\int_{{{\tau}}}^{{{{\tau}_{Q}^x}}}f(X^x_s)ds] =\hat{\mathbb{E}}_{\tau+}[Y(X^x_{\tau+\cdot})]
=\hat{\mathbb{E}}[Y(X^y_{\cdot})]_{y=X^x_{\tau}}=\hat{\mathbb{E}}[\varphi(X^y_{{{\tau}_{Q}^y}})-\int_{{{0}}}^{{{{\tau}_{Q}^y}}}f(X^y_s)ds]_{y=X^x_{\tau}}
=u(X^x_{\tau}).
\end{split}
\end{equation*}
Therefore, we derive that
\begin{equation*}
\begin{split}
u(x)
=\hat{\mathbb{E}}[\hat{\mathbb{E}}_{\tau+}[\varphi(X^x_{{{\tau}_{Q}^x}})-\int_{{{\tau}}}^{{{{\tau}_{Q}^x}}}f(X^x_s)ds]-\int_0^{{{\tau}}}f(X^x_s)ds]
=\hat{\mathbb{E}}[u(X^x_{\tau})-\int_0^{{{\tau}}}f(X^x_s)ds],
\end{split}
\end{equation*}
which ends the proof.
\end{proof}

Now we are ready to state our main result of this section, concerning a probabilistic representation for the viscosity solutions to fully nonlinear PDEs. For the definition and properties of  viscosity solutions, we refer the reader to \cite{CC,CIL,IL}.
\begin{theorem}\label{viscosity solution theorem}
Assume that  $\varphi\in C(\partial Q)$ and $f\in C(\overline{Q})$.
Then $u(x):=\hat{\mathbb{E}}[\varphi(X^x_{{\tau}^{x}_Q})-\int_0^{{\tau}^{x}_Q}f(X^x_{s})ds]$ is the $C(\overline{Q})$-continuous viscosity solution of
\begin{equation}\label{G elliptic PDE}
\begin{cases}
G(\sigma(x)^TD^2u(x)\sigma(x)+H(Du(x)),\sigma(x)^TDu(x))+  \langle b(x),Du(x)\rangle=f(x),\ x\in Q,\\
u(x)=\varphi(x),\ x\in \partial Q,
\end{cases}
\end{equation}
where $H_{ij}(Du):=2\langle Du,h_{ij}\rangle$, $1\leq i,j\leq d$.
\end{theorem}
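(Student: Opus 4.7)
The plan is to proceed in three stages: (i) verify well-definedness of $u$ and the boundary condition $u|_{\partial Q}=\varphi$; (ii) show continuity of $u$ on $\overline Q$; (iii) verify the viscosity subsolution/supersolution inequalities at interior points via the dynamic programming principle of Theorem \ref{brownian motion DDP for bounded domain}.

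For (i) and (ii), Corollary \ref{belong to LC1 corollary} gives $\tau_Q^x\in L_C^1(\Omega_d)$, and Theorem \ref{extended strong markov theorem0}, combined with the continuity of $\varphi$ and $f$, ensures that $\varphi(X^x_{\tau_Q^x})-\int_0^{\tau_Q^x}f(X^x_s)ds\in L_C^1(\Omega_d)$, so $u(x)$ is well-defined. For $x\in \partial Q$, Lemma \ref{stopping time lemma 2} combined with Theorem \ref{exit times equal lemma} yields $\tau_Q^x=0$ q.s., hence $u(x)=\varphi(x)$. To prove continuity, fix $x_k\to x$ in $\overline Q$ and use the triangle-inequality split
\begin{equation*}
|u(x_k)-u(x)|\leq \hat{\mathbb{E}}\bigl[|\varphi(X^{x_k}_{\tau_Q^{x_k}})-\varphi(X^x_{\tau_Q^x})|\bigr]+\hat{\mathbb{E}}\Bigl[\Bigl|\int_0^{\tau_Q^{x_k}}\!\!f(X^{x_k}_s)ds-\int_0^{\tau_Q^x}\!\!f(X^x_s)ds\Bigr|\Bigr];
\end{equation*}
the pathwise estimate \eqref{887268439202}, the $L^1$-type exit-time convergence $\hat{\mathbb{E}}[\tau_Q^x\vee\tau_Q^{x_k}-\tau_Q^x\wedge\tau_Q^{x_k}]\to 0$ from Lemma \ref{tau continuous lemma}, and the uniform continuity of $\varphi,f$ on the relevant compact sets force both terms to $0$.

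The core of the argument is stage (iii). Let $\psi\in C^2(\mathbb{R}^n)$ satisfy that $u-\psi$ attains a local maximum at $x_0\in Q$ with $u(x_0)=\psi(x_0)$; choose $r>0$ so small that $\overline{B(x_0,r)}\subset Q$ and $u\leq\psi$ on $\overline{B(x_0,r)}$, and for small $h>0$ set $\tau_h:=\tau_{B(x_0,r)}^{x_0}\wedge h$, which satisfies $\tau_h\leq \tau_Q^{x_0}$ q.s. Theorem \ref{brownian motion DDP for bounded domain} together with $u\leq\psi$ on $\overline{B(x_0,r)}$ gives $0\leq \hat{\mathbb{E}}[\psi(X^{x_0}_{\tau_h})-\psi(x_0)-\int_0^{\tau_h}f(X^{x_0}_s)ds]$; applying the Itô formula of Theorem \ref{Ito formula} and grouping the $h_{ij}$-integrals with the $D^2\psi$-integral into $\sigma^TD^2\psi\sigma+H(D\psi)$, this becomes
\begin{equation*}
0\leq \hat{\mathbb{E}}\Bigl[\int_0^{\tau_h}\bigl(\langle b,D\psi\rangle-f\bigr)(X^{x_0}_s)ds+\tfrac12\!\int_0^{\tau_h}\!\langle\sigma^TD^2\psi\sigma+H(D\psi),d\langle B\rangle_s\rangle+\int_0^{\tau_h}\langle\sigma^TD\psi,dB_s\rangle\Bigr],
\end{equation*}
with integrands evaluated along $X^{x_0}$. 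Freezing the integrands at $x_0$, using the uniform estimate $\hat{\mathbb{E}}[|X^{x_0}_s-x_0|^2]\leq Cs$ from Proposition \ref{Bcontrol} together with the Lipschitz regularity of $\sigma,b,h_{ij},D\psi,D^2\psi$, produces an $o(h)$ error. The frozen leading-order contribution is computed exactly by Lemma \ref{G martingale lemma of Peng} as $hG(\sigma^TD^2\psi\sigma+H(D\psi),\sigma^TD\psi)(x_0)+h\langle b(x_0),D\psi(x_0)\rangle-hf(x_0)$; dividing by $h$ and letting $h\to 0$ delivers the subsolution inequality. The supersolution case is symmetric, using a local minimum and reversing signs via sublinearity of $\hat{\mathbb{E}}$ (so only one of the two DPP inequalities is needed for each side).

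The principal obstacle is the asymptotic analysis in stage (iii): one must (a) control the freezing error inside the sublinear operator $G$, exploiting its two-sided Lipschitz bound \eqref{uniformly elliptic condition}; (b) show that replacing $\tau_h$ by $h$ in the integrals costs only $o(h)$, which follows from $c(\{\tau_{B(x_0,r)}^{x_0}<h\})\to 0$ as $h\to 0$ (a consequence of pathwise continuity of $X^{x_0}$ and Proposition \ref{Bcontrol}); and (c) bound the stochastic-integral remainder by $O(h^{3/2})$ via the isometry-type estimate \eqref{87676535467}. Each of these steps leverages the capacity estimates together with the precise $G$-martingale identity of Lemma \ref{G martingale lemma of Peng}, and it is their combination — rather than any single one — that constitutes the technical heart of the proof.
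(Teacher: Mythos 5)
Your continuity/boundary part (stages (i)--(ii)) follows essentially the same lines as the paper: $u|_{\partial Q}=\varphi$ is immediate since $X^x_0=x\in Q^c$, and the continuity argument via Lemma \ref{tau continuous lemma}, the path estimates and the splitting of \eqref{eq. 21} is the paper's argument in condensed form (the paper additionally reduces general continuous $\varphi,f$ to Lipschitz data by uniform approximation, a routine step you gloss over). The interesting difference is stage (iii). The paper argues by contradiction: it assumes the subsolution inequality fails at $x_0$ with a strict inequality on a small ball, applies It\^o's formula together with Proposition \ref{martingale proposition} and the optional sampling theorem for $G$-martingales to get the exact identity $\phi(x_0)=\hat{\mathbb{E}}[\phi(X^{x_0}_{\tau})-\int_0^{\tau}(\Upsilon+\langle D\phi,b\rangle)ds]$ for $\tau={\tau}^{x_0}_{U(x_0,\delta_0)}$, and then -- this is the step where the exit-time quasi-continuity results of Section 4 and Remark \ref{remark on tightness guarantee maximum and on closure} enter -- uses the maximum realization over the weakly compact $\mathcal{P}$ to turn the pointwise strict inequality $\Upsilon+\langle D\phi,b\rangle<f$ (together with ${\tau}^{x_0}_{U}>0$ q.s.) into a strict inequality of upper expectations, contradicting the DPP. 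You instead run a direct small-time expansion with $\tau_h={\tau}^{x_0}_{B(x_0,r)}\wedge h$: DPP plus $u\le\psi$, It\^o, freezing of coefficients, the exact computation of the frozen term by Lemma \ref{G martingale lemma of Peng}, divide by $h$. This is a legitimate alternative (it is the classical Peng-style derivation), and it buys you something: you never need the optional sampling theorem for $G$-martingales, the maximizing measure, or any strictness argument, and the quasi-continuity of exit times is only used through the DPP itself. The price is that all errors must be shown to be $o(h)$ \emph{under a sublinear expectation}, which requires peeling them off one at a time by sub-additivity in both directions ($\hat{\mathbb{E}}[\xi+\zeta]\le\hat{\mathbb{E}}[\xi]+\hat{\mathbb{E}}[\zeta]$ and $\ge\hat{\mathbb{E}}[\xi]-\hat{\mathbb{E}}[-\zeta]$), exactly the delicacy the paper's contradiction route avoids.

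One technical point in your item (c) needs repair. The coefficient-freezing error in the $dB$-term, $\int_0^{\tau_h}\langle\sigma^TD\psi(X^{x_0}_s)-\sigma(x_0)^TD\psi(x_0),dB_s\rangle$, cannot be bounded by $O(h^{3/2})$ using the isometry-type estimate \eqref{87676535467}: that estimate only gives an $L^2$-norm of order $(\int_0^h Cs\,ds)^{1/2}=O(h)$, which is of the same order as the leading term and hence useless. What saves the argument is the \emph{first} inequality of Proposition \ref{Bcontrol} (equivalently the decomposition of Theorem \ref{generalized G-BM decomposition}): applying it to $\pm$ the error integral, the martingale part contributes nothing to the upper expectation and the drift part is bounded by $\beta\,\hat{\mathbb{E}}[\int_0^{h}|\sigma^TD\psi(X^{x_0}_s)-\sigma(x_0)^TD\psi(x_0)|ds]\le C\int_0^h\sqrt{s}\,ds=O(h^{3/2})$, and similarly for $\hat{\mathbb{E}}$ of the negative of the error (needed in the supersolution direction). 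The estimate \eqref{87676535467} is still the right tool for the tail term over $[\tau_h,h]$, where combined with $c(\{{\tau}^{x_0}_{B(x_0,r)}<h\})=O(h^2)$ it does give $o(h)$. With this substitution your expansion closes and the proof is correct.
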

\begin{proof}
The uniqueness of  viscosity solutions can be found in \cite{CIL}.
 The proof shall be divided into two steps.

{\it 1 The continuity.} We first consider the case that $\varphi\in C_{b.Lip}(\partial Q)$ and $f\in C_{b.Lip}(\overline{Q})$.

Assume $x_k\rightarrow x$ on $\overline{Q}$. By the sub-linearity of $\hat{\mathbb{E}}$, we have
\begin{equation}\label{eq. 21}
\begin{split}
|u(x)-u(x_k)|
\leq \hat{\mathbb{E}}[|\varphi(X^{x}_{{\tau}^{x}_Q})-\varphi(X^{x_k}_{{\tau}^{x_k}_Q})|]
+\hat{\mathbb{E}}[|\int_0^{{\tau}^{x}_Q}f(X^{x}_{s})ds-\int_0^{{\tau}^{x_k}_Q}f(X^{x_k}_{s})ds|].
\end{split}
\end{equation}
Then we just need to prove that the above two terms in equation (\ref{eq. 21}) converge to 0, as $k\rightarrow \infty$.

For each $T>0$ and $\varepsilon>0$, we can decompose the first term into three parts as follows:
\begin{align}\label{myq4}
\begin{split}
&\hat{\mathbb{E}}[|\varphi(X^{x}_{{\tau}^{x}_Q})-\varphi(X^{x_k}_{{\tau}^{x_k}_Q})|]
\\&\leq \hat{\mathbb{E}}[|\varphi(X^{x}_{{\tau}^{x}_Q})-\varphi(X^{x_k}_{{\tau}^{x_k}_Q})
|I_{\{|{\tau}^{x}_Q(\omega)-{\tau}^{x_k}_Q(\omega)|<
\varepsilon\}}I_{\{{\tau}^{x}_{{Q}}\vee {\tau}^{x_k}_{{Q}}\leq T\}}]\\ &\ \
+\hat{\mathbb{E}}[|\varphi(X^x_{{\tau}^{x}_Q})-\varphi(X^{x_k}_{{\tau}^{x_k}_Q})
|I_{\{|{\tau}^{x}_Q(\omega)-{\tau}^{x_k}_Q(\omega)|<
\varepsilon\}}I_{\{{\tau}^{x}_{{Q}}\vee {\tau}^{x_k}_{{Q}}> T\}}] +\hat{\mathbb{E}}[|\varphi(X^{x}_{{\tau}^{x}_Q})-\varphi(X^{x_k}_{{\tau}^{x_k}_Q})
|I_{\{|{\tau}^{x}_Q(\omega)-{\tau}^{x_k}_Q(\omega)|\geq \varepsilon\}}]\\
&\leq \hat{\mathbb{E}}[|\varphi(X^{x}_{{\tau}^{x}_Q})-\varphi(X^{x_k}_{\tau^{x_k}_Q})
|I_{\{|{\tau}^{x}_Q(\omega)-{\tau}^{x_k}_Q(\omega)|<
\varepsilon\}}I_{\{{\tau}^{x}_{{Q}}\vee {\tau}^{x_k}_{{Q}}\leq T\}}]
+2C_\varphi\hat{\mathbb{E}}[I_{\{{\tau}^{x}_{{Q}}\vee {\tau}^{x_k}_{{Q}}> T\}}]+2C_\varphi\hat{\mathbb{E}}[I_{\{|{\tau}^{x}_Q(\omega)-{\tau}^{x_k}_Q(\omega)|\geq \varepsilon\}}]\\
&=:I_1^{k,\varepsilon,T}+I_2^{k,T}+I_3^{k,\varepsilon}.
\end{split}
\end{align}
Now we shall deal with the three parts separately.
For $I_1^{k,\varepsilon,T}$, by a direct calculation we have that
\begin{align*}
I_1^{k,\varepsilon,T}
&\leq \hat{\mathbb{E}}[|\varphi(X^{x}_{{\tau}^{x}_Q})-\varphi(X^{x}_{{\tau}^{x_k}_Q})
|I_{\{|{\tau}^{x_k}_Q(\omega)-{\tau}^{x}_Q(\omega)|<
	\varepsilon\}}I_{\{{\tau}^{x}_{{Q}}\vee {\tau}^{x_k}_{{Q}}\leq T\}}]
\\
&\ \ \ \ \ \ \ \ +\hat{\mathbb{E}}[|\varphi(X^{x}_{{\tau}^{x_k}_Q})-\varphi(X^{x_k}_{{\tau}^{x_k}_Q})
|I_{\{|{\tau}^{x}_Q(\omega)-{\tau}^{x_k}_Q(\omega)|<
\varepsilon\}}I_{\{{\tau}^{x}_{{Q}}\vee {\tau}^{x_k}_{{Q}}\leq T\}}]\\
&\leq L_{\varphi}\hat{\mathbb{E}}[\sup_{\substack{t,s\in [0,T]\\0\leq |t-s|\leq\varepsilon }}|X^{x}_{t}-X^{x}_{s}|]+L_{\varphi}\hat{\mathbb{E}}[\sup_{  t\in [0,T]}|X^{x}_{t}-X^{x_k}_{t}
|].
\end{align*}
For each integer $\rho\geq 1$, denote $t^{\rho}_i=\frac{i}{\rho}T$, $i=0, \ldots, \rho$.
Then one can easily check that
\[
\sup_{\substack{t,s\in [0,T]\\0\leq |t-s|\leq\varepsilon }}| X^{x}_t- X^{x}_s|\leq 3\sup_{i}\sup_{s\in[t^{\rho}_i,t^{\rho}_{i+1}]}| X^{x}_{t^{\rho}_i}- X^{x}_s|,
\]
whenever $\varepsilon\leq \frac{T}{\rho}.$ Thus by a standard argument we  can find some generic constant $C_{T}>0$ (which may vary from line to line) independent of $k,\varepsilon$ so that,
for each $\varepsilon\leq \frac{T}{\rho}$,
\begin{align*}
\hat{\mathbb{E}}[\sup_{\substack{t,s\in [0,T]\\0\leq |t-s|\leq\varepsilon }}| X^{x}_t- X^{x}_s|^{4}]
\leq 3^4\sum\limits_{i=0}^{\rho-1}\hat{\mathbb{E}}[\sup_{s\in[t^{\rho}_i,t^{\rho}_{i+1}]}| X^{x}_{t^{\rho}_i}- X^{x}_s|^{4}]\leq \frac{C_T}{\rho}.
\end{align*}
Moreover, it holds that
\[
\hat{\mathbb{E}}[\sup_{  t\in [0,T]}|X^{x}_{t}-X^{x_k}_{t}
|]\leq C_T|x-x_k|.
\]
Consequently, we obtain that for each  $\rho$
\[
I_1^{k,\varepsilon,T}\leq C_TL_{\varphi}(\frac{1}{\rho^{\frac{1}{4}}}+|x-x_k|), \ \text{for each}\ \varepsilon\leq \frac{T}{\rho},
\]
which indicates that
$\limsup\limits_{k,\varepsilon\rightarrow 0}I_1^{k,\varepsilon,T}=0$ for each $T>0$.
\\
For   $I_2^{k,T}$, it follows from Lemma \ref{stopping time lemma}  and Markov's inequality that\[
I_2^{k,T}\leq 2C_{\varphi}\hat{\mathbb{E}}[I_{\{{\tau}^{x}_{{Q}}+ {\tau}^{x_k}_{{Q}}> T\}}]\leq 2C_{\varphi}\{ \hat{\mathbb{E}}[I_{\{{\tau}^{x}_{{Q}}> \frac{T}{2}\}}]+\hat{\mathbb{E}}[I_{\{{\tau}^{x_k}_{{Q}}> \frac{T}{2}\}}]\}
\leq \frac{8CC_{\varphi}}{T}, \ \text{for each} \ T>0.\]
 For $I_3^{k,\varepsilon}$, it follows from Lemma \ref{tau continuous lemma} that
$
\limsup\limits_{k\rightarrow\infty}I_3^{k,\varepsilon}=0
$ for each $\varepsilon>0.$

By the above analysis, letting $k,\varepsilon\rightarrow 0$ and then sending $T\rightarrow\infty$ in equation \eqref{myq4}
yield that
\[
\limsup\limits_{k\rightarrow\infty}\hat{\mathbb{E}}[|\varphi(X^{x}_{{\tau}^{x}_Q})-\varphi(X^{x_k}_{{\tau}^{x_k}_Q})|]=0.
\]

Now we consider the second term in equation (\ref{eq. 21}). Since $f$ is bounded and Lipschitz continuous on $\overline{Q}$, we have
\begin{align*}
&\hat{\mathbb{E}}[|\int_0^{{\tau}^{x}_Q}f(X^x_{s})ds-\int_0^{{\tau}^{x_k}_Q}f(X^{x_k}_{s})ds|]\\
&\leq \hat{\mathbb{E}}[|\int_0^{{\tau}^{x}_Q\wedge {\tau}^{x_k}_Q}(f(X^{x}_{s})-f(X^{x_k}_{s}))ds|]+
2C_f\hat{\mathbb{E}}[{{\tau}^{x}_Q\vee {\tau}^{x_k}_Q}-{{\tau}^{x}_Q\wedge {\tau}^{x_k}_Q}]\\
&\leq \hat{\mathbb{E}}[|\int_0^{{\tau}^{x}_Q\wedge {\tau}^{x_k}_Q}(f(X^{x}_{s})-f(X^{x_k}_{s}))ds|I_{\{{{\tau}^{x}_Q\wedge {\tau}^{x_k}_Q}\leq T\}}]+2C_f\hat{\mathbb{E}}[{{\tau}^{x}_Q\wedge {\tau}^{x_k}_Q}I_{\{{{\tau}^{x}_Q\wedge {\tau}^{x_k}_Q}> T\}}]+
2C_f\hat{\mathbb{E}}[{{\tau}^{x}_Q\vee {\tau}^{x_k}_Q}-{{\tau}^{x}_Q\wedge {\tau}^{x_k}_Q}]\\
&\leq L_f T\hat{\mathbb{E}}[\sup_{  t\in [0,T]}|X^{x}_{t}-X^{x_k}_{t}
|]+2C_f\hat{\mathbb{E}}[{{\tau}^{x}_Q\wedge {\tau}^{x_k}_Q}I_{\{{{\tau}^{x}_Q\wedge {\tau}^{x_k}_Q}> T\}}]+
2C_f\hat{\mathbb{E}}[{{\tau}^{x}_Q\vee {\tau}^{x_k}_Q}-{{\tau}^{x}_Q\wedge {\tau}^{x_k}_Q}].
\end{align*}
For any $\delta>0$, by first letting $T$ large enough such that the second term is smaller than $2C_f\delta$, then letting $k\rightarrow\infty$, we deduce that
$$
\limsup\limits_{k\rightarrow\infty}\hat{\mathbb{E}}[|\int_0^{{\tau}^{x}_Q}f(X^x_{s})ds-\int_0^{{\tau}^{x_k}_Q}f(X^{x_k}_{s})ds|]\leq 2C_f\delta,$$
which implies
$$
\limsup\limits_{k\rightarrow\infty}\hat{\mathbb{E}}[|\int_0^{{\tau}^{x}_Q}f(X^x_{s})ds-\int_0^{{\tau}^{x_k}_Q}f(X^{x_k}_{s})ds|]=0.$$
Therefore, we obtain the continuity of  $u$ on $\overline{Q}$.

For the general case that $\varphi\in C(\partial Q)$ and $f\in C(\overline{Q})$,
we could find a sequence of bounded and Lipschitz functions $\varphi_n\in C(\partial Q)$ and $f_n\in C(\overline{Q})$ such that $\varphi_n$ and $f_n$  converge uniformly to
$\varphi$ and $f$, respectively. Then $u_n$ converges to $u$ uniformly in $\overline{Q}$ and this implies the desired result.

{\it 2 Viscosity solution  property.}
We just   prove the viscosity sub-solution case, since another case can be proved in a similar way.
Assume that $u$ does not satisfy the viscosity sub-solution property. Then there exists a test function $\phi\in C^2(\overline{Q})$ such that
$\phi\geq u$ on $Q$, $\phi(x_0)=u(x_0)$ for some point  $x_0\in Q$ and
$$
G(\sigma(x_0)^TD^2\phi(x_0)\sigma(x_0)+H(D\phi(x_0)),\sigma(x_0)^TD\phi(x_0))+  \langle b(x_0),D\phi(x_0)\rangle<f(x_0).
$$
By the continuity, we can find an open ball $U(x_0,{\delta_0})\subset Q$ for some   $\delta_0>0$ such that
$$G(\sigma(x)^TD^2\phi(x)\sigma(x)+H(D\phi(x)),\sigma(x)^TD\phi(x))+  \langle b(x),D\phi(x)\rangle <f(x),  \ \text{for all} \ x\in U(x_0,{\delta_0}).$$
 Moreover, ${\tau}^{x_0}_{U(x_0,{\delta_0})}>0$ for q.s. $\omega$.

Set $\Upsilon(x):=G(\sigma(x)^TD^2\phi(x)\sigma(x)+H(D\phi(x)),\sigma(x)^TD\phi(x))$.
Applying It\^{o} formula to $\phi$, we have
\begin{align*}
&\phi(X^{x_0}_{{\tau}^{x_0}_{U(x_0,{\delta_0})}\wedge t})-\phi(x_0)-\int_0^{{\tau}^{x_0}_{U(x_0,{\delta_0})}\wedge t}\langle D\phi(X^{x_0}_s), b(X^{x_0}_s)\rangle ds\\
&=\int_0^{{\tau}^{x_0}_{U(x_0,{\delta_0})}\wedge t}\langle D\phi(X^{x_0}_s),\sigma(X^{x_0}_s) dB_s\rangle+\frac12\int_0^{{\tau}^{x_0}_{U(x_0,{\delta_0})}\wedge t}\langle \sigma^T(X^{x_0}_s)D^2\phi(X^{x_0}_s)\sigma(X^{x_0}_s)+H(D\phi(X^{x_0}_s)), d\langle B\rangle_s\rangle\\
&=\int_0^{{\tau}^{x_0}_{U(x_0,{\delta_0})}\wedge t}\Upsilon(X^{x_0}_s)ds+M_{{\tau}^{x_0}_{U(x_0,{\delta_0})}\wedge t},
\end{align*}
where $M$ is a $G$-martingale   (Proposition \ref{martingale proposition}) and given by\begin{align*}
M_t=\int_0^{ t}\langle D\phi(X^{x_0}_s),\sigma(X^{x_0}_s) dB_s\rangle+\frac12\int_0^{ t}\langle\sigma^T(X^{x_0}_s)D^2\phi(X^{x_0}_s)\sigma(X^{x_0}_s)+H(D\phi(X^{x_0}_s)), d\langle B\rangle_s\rangle
  -\int_0^{t}\Upsilon(X^{x_0}_s)ds.
\end{align*}
That is,
\begin{align*}
M_{{\tau}^{x_0}_{U(x_0,{\delta_0})}\wedge t}+\phi(x_0)=\phi(X^{x_0}_{{\tau}^{x_0}_{U(x_0,{\delta_0})}\wedge t})-\int_0^{{\tau}^{x_0}_{U(x_0,{\delta_0})}\wedge t}\langle D\phi(X^{x_0}_s), b(X^{x_0}_s)\rangle ds-\int_0^{{\tau}^{x_0}_{U(x_0,{\delta_0})}\wedge t}\Upsilon(X^{x_0}_s)ds.
\end{align*}
Taking expectation on both sides and then using     the optional sampling theorem for $G$-martingales (see  Theorem 48 in \cite{HP1}), we get that
\begin{equation*}
\begin{split}
\phi(x_0)=\hat{\mathbb{E}}[\phi(X^{x_0}_{{\tau}^{x_0}_{U(x_0,{\delta_0})}\wedge t})-\int_0^{{\tau}^{x_0}_{U(x_0,{\delta_0})}\wedge t}(\Upsilon(X^{x_0}_s)+\langle D\phi(X^{x_0}_s), b(X^{x_0}_s)\rangle) ds].
\end{split}
\end{equation*}
Recalling Lemma \ref{square stopping time lemma}, we have that\begin{align*}
\hat{\mathbb{E}}[|\phi(X^{x_0}_{{\tau}^{x_0}_{U(x_0,{\delta_0})}\wedge t})-\phi(X^{x_0}_{{\tau}^{x_0}_{U(x_0,{\delta_0})}})|]&\leq 2C_{\phi}\hat{\mathbb{E}}[I_{\{{\tau}^{x_0}_{U(x_0,{\delta_0})}\geq t\}}]\rightarrow 0, \ \text{as}\ t\rightarrow\infty,\\
\hat{\mathbb{E}}[|\int_0^{{\tau}^{x_0}_{U(x_0,{\delta_0})}\wedge t}\psi(X^{x_0}_s)ds-\int_0^{{\tau}^{x_0}_{U(x_0,{\delta_0})}}\psi(X^{x_0}_s)ds|]&\leq 2C_{\psi}\hat{\mathbb{E}}[{\tau}^{x_0}_{U(x_0,{\delta_0})}I_{\{{\tau}^{x_0}_{U(x_0,{\delta_0})}\geq t\}}]\rightarrow 0,\ \text{as}\ t\rightarrow\infty,
\end{align*}
for $\psi:=\Upsilon+\langle D\phi, b\rangle.$
Therefore, it follows that
\begin{equation*}\label{98767833333575}
\begin{split}
\phi(x_0)&=\hat{\mathbb{E}}[\phi(X^{x_0}_{{\tau}^{x_0}_{U(x_0,{\delta_0})}})-\int_0^{{\tau}^{x_0}_{U(x_0,{\delta_0})}}(\Upsilon(X^{x_0}_s)+\langle D\phi(X^{x_0}_s), b(X^{x_0}_s)\rangle) ds]\\
&=\hat{\mathbb{E}}^{x_0}_2[\phi(B'_{{\tau}^{0,1}_{U(x_0,{\delta_0})}})
-\int_0^{{\tau}^{0,1}_{U(x_0,{\delta_0})}}(\Upsilon(B'_s)+\langle D\phi(B'_s),b(B'_s)\rangle) ds].
\end{split}
\end{equation*}

Note that ${{\tau}^{0,1}_{U(x_0,{\delta_0})}}\in L_C^1(\Omega_n,\mathcal{P}\circ (X_\cdot^{x_0})^{-1})$ by Lemma \ref{exit time quasi continuity lemma}.
Then a similar analysis as in the first part in the proof of Theorem \ref{brownian motion DDP for bounded domain} gives
$\phi(B'_{{\tau}^{0,1}_{U(x_0,{\delta_0})}})
-\int_0^{{\tau}^{0,1}_{U(x_0,{\delta_0})}}\widetilde{\psi}(B'_s)ds\in L_C^1(\Omega_n,\mathcal{P}\circ (X_\cdot^{x_0})^{-1})$ for each $\widetilde{\psi}\in C(\overline{Q})$. Thus in sprit of Remark \ref{remark on tightness guarantee maximum and on closure} and the fact that $\Upsilon+\langle D\phi, b\rangle< f$ on $U(x_0,{\delta_0})$,
 we conclude that
\begin{equation*}
\begin{split}
\phi(x_0)&=\max_{P\in \mathcal{P}}E_{P\circ (X^{x_0}_\cdot)^{-1}}[\phi(B'_{{\tau}^{0,1}_{U(x_0,{\delta_0})}})-\int_0^{{\tau}^{0,1}_{U(x_0,{\delta_0})}}(\Upsilon(B'_s)+\langle D\phi(B'_s),b(B'_s)\rangle) ds]\\
&>\max_{P\in \mathcal{P}}E_{P\circ (X^{x_0}_\cdot)^{-1}}[\phi(B'_{{\tau}^{0,1}_{U(x_0,{\delta_0})}})-\int_0^{{\tau}^{0,1}_{U(x_0,{\delta_0})}}f(B'_s)ds]\\
&=\hat{\mathbb{E}}[\phi(X^{x_0}_{{\tau}^{x_0}_{U(x_0,{\delta_0})}})-\int_0^{{\tau}^{x_0}_{U(x_0,{\delta_0})}}f(X^{x_0}_s)ds].
\end{split}
\end{equation*}
However, by Theorem \ref{brownian motion DDP for bounded domain}, we get that
$$
u(x_0)=\hat{\mathbb{E}}[u(X^{x_0}_{{\tau}^{x_0}_{U(x_0,{\delta_0})}})-\int_0^{{\tau}^{x_0}_{U(x_0,{\delta_0})}}f(X^{x_0}_s)ds]\leq \hat{\mathbb{E}}[\phi(X^{x_0}_{{\tau}^{x_0}_{U(x_0,{\delta_0})}})-\int_0^{{\tau}^{x_0}_{U(x_0,{\delta_0})}}f(X^{x_0}_s)ds]<\phi(x_0)=u(x_0),
$$
which is a contradiction. The proof is complete.
\end{proof}

\begin{corollary}\label{supremum realization}
	Assume that  $\varphi\in C(\partial Q)$ and $f\in C(\overline{Q})$. For $u$ defined as the above theorem, we have
	\begin{equation}u(x)=\max_{P\in\mathcal{P}}E_P[\varphi(X^x_{{\tau}^{x}_Q})-\int_0^{{\tau}^{x}_Q}f(X^x_{s})ds]\end{equation}
\end{corollary}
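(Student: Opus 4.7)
The plan is to reduce the corollary to an application of Remark \ref{remark on tightness guarantee maximum and on closure}, which says that the supremum $\hat{\mathbb{E}}[Y] = \sup_{P\in\mathcal{P}} E_P[Y]$ is actually attained as a maximum whenever $Y \in L_C^1(\Omega_d)$. So the whole task reduces to showing that
\[
Y := \varphi(X^x_{\tau^x_Q}) - \int_0^{\tau^x_Q} f(X^x_s)\, ds \in L_C^1(\Omega_d).
\]

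By Corollary \ref{belong to LC1 corollary}, membership in $L_C^1(\Omega_d)$ will follow once I verify
\[
Z := \varphi(B'_{\tau_Q^{0,1}}) - \int_0^{\tau_Q^{0,1}} f(B'_s)\, ds \in L_C^1\bigl(\Omega_n,\, \mathcal{P}\circ(X^x_\cdot)^{-1}\bigr),
\]
since $Z(X^x_\cdot) = Y$ by the definition of $\tau^x_Q$. This is exactly the sort of step carried out in the first half of the proof of Theorem \ref{brownian motion DDP for bounded domain}, specialized to $\tau \equiv 0$.

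Concretely, I would proceed as follows. First, fix a compact set $K \subset \mathbb{R}^n$ containing $x$; by Theorem \ref{exit time quasi continuity lemma}, $\tau_Q^{0,1} \in L_C^1(\Omega_n, \mathcal{P}^K_2)$, so $\tau_Q^{0,1}$ admits a $\mathcal{P}^K_2$-quasi-continuous version, and in particular a $\mathcal{P}\circ(X^x_\cdot)^{-1}$-quasi-continuous version. Composing with the continuous functions $\varphi$ and $f$ and the continuous path map shows that the bounded truncations
\[
Z_k := \varphi(B'_{\tau_Q^{0,1}}) - \int_0^{\tau_Q^{0,1}\wedge k} f(B'_s)\, ds
\]
are also $\mathcal{P}\circ(X^x_\cdot)^{-1}$-quasi-continuous, hence $Z_k \in L_C^1(\Omega_n, \mathcal{P}\circ(X^x_\cdot)^{-1})$ by Theorem \ref{LG characteriazation theorem}. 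Next, I control the tail:
\[
\hat{\mathbb{E}}\bigl[|Z - Z_k|(X^x_\cdot)\bigr]
\leq C_f\, \hat{\mathbb{E}}\bigl[\tau_Q^x\, I_{\{\tau_Q^x > k\}}\bigr]
\leq \frac{C_f\, \hat{\mathbb{E}}[(\tau_Q^x)^2]}{k} \to 0
\]
as $k \to \infty$ by Lemma \ref{square stopping time lemma}. This gives $Z \in L_C^1(\Omega_n, \mathcal{P}\circ(X^x_\cdot)^{-1})$ as desired, and closing the loop through Corollary \ref{belong to LC1 corollary} followed by Remark \ref{remark on tightness guarantee maximum and on closure} yields the conclusion.

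There is no real obstacle here since the heavy machinery (quasi-continuity of $\tau_Q^{0,1}$, weak compactness of $\mathcal{P}^K_2$, and the strong-Markov transfer of $L_C^1$-membership from $\Omega_n$ back to $\Omega_d$) has already been assembled. The only point that deserves a line of care is confirming that $L_C^1$-membership of $Z$ under the induced family $\mathcal{P}\circ(X^x_\cdot)^{-1}$ on $\Omega_n$ is enough to deduce $L_C^1$-membership of $Z(X^x_\cdot)$ under $\mathcal{P}$ on $\Omega_d$, which is precisely the content of Corollary \ref{belong to LC1 corollary}.
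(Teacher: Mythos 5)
Your proposal is correct and follows essentially the same route as the paper: the paper likewise obtains $\varphi(B'_{\tau_Q^{0,1}})-\int_0^{\tau_Q^{0,1}}f(B'_s)ds\in L_C^1(\Omega_n,\mathcal{P}\circ(X^x_\cdot)^{-1})$ by citing the first part of the proof of Theorem \ref{brownian motion DDP for bounded domain} (which you simply spell out with $\tau\equiv 0$, using Theorem \ref{exit time quasi continuity lemma}, the truncation, and Lemma \ref{square stopping time lemma}), and then concludes via Corollary \ref{belong to LC1 corollary} and Remark \ref{remark on tightness guarantee maximum and on closure}.
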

\begin{proof}
According to the proof of Theorem \ref{brownian motion DDP for bounded domain}, we have
$\varphi(B'_{{\tau}^{0,1}_{Q}})
-\int_0^{{\tau}^{0,1}_{Q}}f(B'_s)ds\in L_C^1(\Omega_n,\mathcal{P}\circ (X_\cdot^{x})^{-1})$. Then Corollary  \ref{belong to LC1 corollary}  implies  $\varphi(X^x_{{\tau}^{x}_Q})-\int_0^{{\tau}^{x}_Q}f(X^x_{s})ds\in L_C^1(\Omega_d)$, and the desired result now follows from Remark \ref{remark on tightness guarantee maximum and on closure}.
	\end{proof}

The following result is a direct conclusion of   Theorem \ref{viscosity solution theorem}.
\begin{corollary}\label{viscosity solution theorem2}
Assume $\varphi,f$ satisfy the same condition  as above. Then  $u(x):=-\hat{\mathbb{E}}[-\varphi(X^x_{{\tau}^{x}_Q})+\int_0^{{\tau}^{x}_Q}f(X^x_{s})ds]$ is the $C(\overline{Q})$-continuous viscosity solution of
\begin{equation}\label{-G}
\begin{cases}
-G(-\sigma(x)^TD^2u(x)\sigma(x)-H(Du(x)),-\sigma(x)^TDu(x))+ \langle b(x),Du(x)\rangle=f(x),\ x\in Q,\\
u(x)=\varphi(x),\ x\in \partial Q.
\end{cases}
\end{equation}
\end{corollary}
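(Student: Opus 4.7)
The plan is to read Corollary \ref{viscosity solution theorem2} as what is obtained by applying Theorem \ref{viscosity solution theorem} to the data $(-\varphi,-f)$ and then flipping the sign of the solution. Concretely, define $v(x):=-u(x)$. Unwinding the definition,
\[
v(x)=\hat{\mathbb{E}}\Bigl[-\varphi(X^x_{\tau^x_Q})+\int_0^{\tau^x_Q}f(X^x_s)ds\Bigr]
=\hat{\mathbb{E}}\Bigl[(-\varphi)(X^x_{\tau^x_Q})-\int_0^{\tau^x_Q}(-f)(X^x_s)ds\Bigr],
\]
so $v$ is exactly the function produced by Theorem \ref{viscosity solution theorem} with boundary data $-\varphi\in C(\partial Q)$ and source $-f\in C(\overline{Q})$. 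In particular $v\in C(\overline{Q})$, which immediately gives $u=-v\in C(\overline{Q})$, and $v|_{\partial Q}=-\varphi$, which yields the boundary condition $u|_{\partial Q}=\varphi$.

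Next I would translate the PDE for $v$ into the PDE for $u$. By Theorem \ref{viscosity solution theorem}, $v$ is a viscosity solution of
\[
G\bigl(\sigma(x)^{T}D^{2}v(x)\sigma(x)+H(Dv(x)),\sigma(x)^{T}Dv(x)\bigr)+\langle b(x),Dv(x)\rangle=-f(x),\quad x\in Q.
\]
Substituting $v=-u$ gives $Dv=-Du$ and $D^{2}v=-D^{2}u$. Since the map $p\mapsto H(p)$ is linear in $p$ (as $H_{ij}(p)=2\langle p,h_{ij}\rangle$), we have $H(-Du)=-H(Du)$. Plugging in and multiplying through by $-1$ produces exactly
\[
-G\bigl(-\sigma(x)^{T}D^{2}u(x)\sigma(x)-H(Du(x)),-\sigma(x)^{T}Du(x)\bigr)+\langle b(x),Du(x)\rangle=f(x).
\]

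It remains to justify that this formal algebraic manipulation is legitimate at the viscosity level, which is the only non-bookkeeping step. This is the standard fact that if $v$ is a viscosity solution of $F(x,Dv,D^{2}v)=0$, then $u=-v$ is a viscosity solution of $\tilde F(x,Du,D^{2}u):=-F(x,-Du,-D^{2}u)=0$; the proof is the one-line observation that $\phi$ is an upper (resp.\ lower) test function for $v$ at $x_0$ iff $-\phi$ is a lower (resp.\ upper) test function for $u=-v$ at $x_0$, so the sub/super-solution inequalities for $v$ convert directly into the super/sub-solution inequalities for $u$ with the transformed operator. Uniqueness in $C(\overline{Q})$ for (\ref{-G}) is furnished by the comparison principle for the transformed (still degenerate elliptic) operator, as quoted from \cite{CIL}. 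This completes the argument; no new estimate beyond those in Theorem \ref{viscosity solution theorem} is needed, which is why the statement is labeled a corollary.
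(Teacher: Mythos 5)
Your proposal is correct and is essentially the paper's own argument: the paper also obtains the corollary by applying Theorem \ref{viscosity solution theorem} with $\tilde{\varphi}:=-\varphi$, $\tilde{f}:=-f$ and $\tilde{u}:=-u$, and your extra remarks (linearity of $H$ and the standard sign-flip for viscosity test functions) merely spell out the routine details that the paper leaves implicit.
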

\begin{proof}
	The proof is immediate from  Theorem \ref{viscosity solution theorem} by taking $\tilde{\varphi}:=-\varphi$, $\tilde{f}:=-f$ and $\tilde{u}(x):=-u(x)$.
\end{proof}

\appendix
\renewcommand\thesection{Appendix: The  proof of Theorem \ref{generalized G-BM decomposition}}
\section{ }
\renewcommand\thesection{A}
Now we are ready to state the  proof of Theorem \ref{generalized G-BM decomposition}.

\begin{proof}
 It suffices to  prove the decomposition on $[0,1]$.
The main idea is based on the technique of Doob-Meyer's decomposition,  see, e.g., \cite{KS}.
For  simplicity, we omit the superscript $P$ on $A^P$ and $M^P$. The proof shall be divided into two steps.

{\it 1 Existence.} For each integer $m>0$, let $\Pi_m:=\{t^m_j=\frac{j}{2^m}:j\leq {2^m} \}$ be a partition of $[0,1]$. Denote $\Pi:=\bigcup_{m=1}^{\infty}\Pi_m$.
	Then we define
	$$
	A^m_{0}:=0,\ \ \ \ A^m_{t^m_{j}}:=A^m_{t^m_{j-1}}+E_P[B_{t^m_{j}}-B_{t^m_{j-1}}|\mathcal{F}_{t^m_{j-1}}]=\sum_{k=0}^{j-1}E_P[B_{t^m_{k+1}}- B_{t^m_{k}}|\mathcal{F}_{t^m_{k}}], \ \forall j\leq 2^m.
	$$
	For each $m$, a direct calculation shows that $B_t-A_t^m$ is  a martingale on $\Pi_m$.
	By Proposition \ref{properties of generalized G-BM} and Lemma \ref{STZ lemma for generalized G},
 we can find some constant $L$ independent of $m$ so that $E_P[|A^m_{1}|^2]\leq L$.
Thus there exists a subsequence, still denote by $A^m_{1}$, such that
$A^m_{1}$ converges to $A_1$ in $L^2(\Omega,\mathcal{F}_1,P;\mathbb{R}^d)$ weakly.
Note that $A_t^m=B_t-E_P[ B_1-A_1^m|\mathcal{F}_{t}]$ for each $t\in\Pi_m$.
We define $A_t$ as the right-continuous modification of $B_t-E_P[ B_1-A_1|\mathcal{F}_{t}]$, $t\in[0,1]$. Then it is obvious that
	$A^m_{t}$ converges to  $A_t$ in $L^2(\Omega,\mathcal{F}_1,P;\mathbb{R}^d)$ weakly for each $t\in \Pi$.

From Proposition \ref{properties of generalized G-BM} and  Lemma \ref{STZ lemma for generalized G},
we derive that for $s<t\in\Pi_m$,
	$$
	\langle p,A^m_t-A^m_s\rangle\leq g(p)(t-s), \ \forall p\in\mathbb{R}^d.
	$$
It follows that for any $s,t\in \Pi$ such that $s<t$ and $0\leq \xi\in L^2(\Omega,\mathcal{F}_1,P)$,
	\begin{equation*}
	E_P[\xi(\langle p,A_t\rangle-\langle p, A_s\rangle)]=\lim_{n\rightarrow\infty}E_P[\xi(\langle p,A^n_t\rangle-\langle p,A^n_s\rangle)]\leq E_P[\xi g(p)(t-s)],
	\end{equation*}	
which implies
	$\langle p,A_t\rangle-\langle p, A_s\rangle\leq g(p)(t-s)\  P\text{-a.s.},
$   for each $t\geq s\in \Pi$.
Note that $A$ is  right continuous, we get that
	\begin{equation}\label{3244444444444323}
	\langle p,A_t\rangle-\langle p, A_s\rangle\leq g(p)(t-s),\ \ \ \ \text{for each}\ t> s\in [0,1], \ P\text{-a.s.}
	\end{equation}
Therefore, it holds that   $
|\langle p,A_t\rangle-\langle p, A_s\rangle|\leq [g(p)\vee g(-p)](t-s),  P\text{-a.s.}$	
and thus $\langle p,A_t\rangle$ is absolutely continuous on $[0,1]$. Dividing $t-s>0$ on both sides of (\ref{3244444444444323}) and letting $s\rightarrow t$, we obtain
\[\langle p,\frac{dA_t}{dt}\rangle\leq g(p), \ \ \ \ \ {a.e.\  t},\  P\text{-a.s.}\]
Consequently, the decomposition is obtained after we define the martingale
	$$M_t:=B_t-A_t=E_P[B_1-A_1|\mathcal{F}_{t}].$$
	
{\it 2	The estimate of $\langle M\rangle ^P$}.
For each $s<t\leq 1$, we take $\Pi_m:=\{t^m_k=s+\frac{k}{m}(t-s):k\leq {m} \}$ as the partition of $[s,t]$, $m\geq 1$.
For any given $A\in \mathbb{S}(d)$, we have
	\begin{align*}
	&E_P[(\sum_{k=0}^{m-1} \langle A,(B_{t^m_{k+1}}-B_{t^m_{k}})(B_{t^m_{k+1}}-B_{t^m_{k}})^T\rangle-G_1(A)(t-s))^+]\\
	& \leq \hat{\mathbb{E}}[(\sum_{k=0}^{m-1}\langle A,(B_{t^m_{k+1}}-B_{t^m_{k}})(B_{t^m_{k+1}}-B_{t^m_{k}})^T\rangle-G_1(A)(t-s))^+]\\
	&=\sup_{(\gamma,\mu) \in \mathcal{A}^{\Theta}}E_{P^0}[ (\sum_{k=0}^{m-1}\langle A,(\int_{t^m_{k}}^{t^m_{k+1}}\gamma_rdW_r+\int_{t^m_{k}}^{t^m_{k+1}}\mu_rdr)(\int_{t^m_{k}}^{t^m_{k+1}}\gamma_rdW_r+\int_{t^m_{k}}^{t^m_{k+1}}\mu_rdr)^T\rangle-G_1(A)(t-s))^+    ]\\
	&\leq I_1+I_2+I_3,
	\end{align*}
with
	\begin{equation}
	\begin{split}
	&I_1:=\sup_{(\gamma,\mu) \in \mathcal{A}^{\Theta}}E_{P^0}[ (\sum_{k=0}^{m-1}\langle A,(\int_{t^m_{k}}^{t^m_{k+1}}\gamma_rdW_r)(\int_{t^m_{k}}^{t^m_{k+1}}\gamma_rdW_rdr)^T\rangle-G_1(A)(t-s))^+ ],\\
	&I_2:=\sup_{(\gamma,\mu) \in \mathcal{A}^{\Theta}}E_{P^0}[ (\sum_{k=0}^{m-1}\langle A,(\int_{t^m_{k}}^{t^m_{k+1}}\mu_rdr)(\int_{t^m_{k}}^{t^m_{k+1}}\mu_rdr)^T\rangle)^+ ],\\
	&I_3:=2\sup_{(\gamma,\mu) \in \mathcal{A}^{\Theta}}E_{P^0}[ (\sum_{k=0}^{m-1}\langle A,(\int_{t^m_{k}}^{t^m_{k+1}}\gamma_rdW_r)(\int_{t^m_{k}}^{t^m_{k+1}}\mu_rdr)^T\rangle)^+ ].
	\end{split}
	\end{equation}

Next we shall deal with the above three terms separately.
For the $I_2$ term,   a direct calculation gives that, for  some constant $L_1$ depending on  $\Sigma$,
	\begin{align*}
	E_{P^0}[ (\sum_{k=0}^{m-1}\langle A,(\int_{t^m_{k}}^{t^m_{k+1}}\mu_rdr)(\int_{t^m_{k}}^{t^m_{k+1}}\mu_rdr)^T\rangle)^+ ]\leq E_{P^0}[ \sum_{k=0}^{m-1}|\langle A,(\int_{t^m_{k}}^{t^m_{k+1}}\mu_rdr)(\int_{t^m_{k}}^{t^m_{k+1}}\mu_rdr)^T\rangle| ]& \leq 	\frac{L_1|A|}{m}(t-s)^2.
	\end{align*}
By a similar analysis,  we can find  some constant $L_2$   depending on  $\Sigma$ and $\Gamma$ such that	
$$I_3\leq \frac{L_2|A|}{\sqrt{m}}(t-s)^\frac32.$$
  Now we consider the $I_1$ term. By the definition of $G_1$, we derive that
	\begin{align*}
	I_1& \leq E_{P^0}[ (\langle A,\int_s^t\gamma_r\gamma_r^Tdr\rangle-G_1(A)(t-s))^+ ] \\
	&\ \ \ \ +E_{P^0}[ (\sum_{k=0}^{m-1}\langle A,(\int_{t^m_{k}}^{t^m_{k+1}}\gamma_sdW_s)(\int_{t^m_{k}}^{t^m_{k+1}}\gamma_rdW_r)^T -\int_{t^m_{k}}^{t^m_{k+1}}\gamma_r\gamma_r^Tdr\rangle)^+ ]\\
	& \leq   E_{P^0}[ (\sum_{k=0}^{m-1}\langle A,(\int_{t^m_{k}}^{t^m_{k+1}}\gamma_rdW_r)(\int_{t^m_{k}}^{t^m_{k+1}}\gamma_rdW_r)^T-\int_{t^m_{k}}^{t^m_{k+1}}\gamma_r\gamma_r^Tdr\rangle)^+ ]\\
	&\leq E_{P^0}[(\sum_{k=0}^{m-1}\langle A,(\int_{t^m_{k}}^{t^m_{k+1}}\gamma_rdW_r)(\int_{t^m_{k}}^{t^m_{k+1}}\gamma_rdW_r)^T-\int_{t^m_{k}}^{t^m_{k+1}}\gamma_r\gamma_r^Tdr\rangle)^2]^\frac12.
	\end{align*}
	Note that $\langle A,(\int_{t^m_{k}}^{t}\gamma_rdW_r)(\int_{t^m_{k}}^{t}\gamma_rdW_r)^T\rangle-\langle A,\int_{t^m_{k}}^{t}\gamma_r\gamma_r^Tdr\rangle$ is a $P$-martingale on $[t^m_k, t^m_{k+1}]$, then we have
	\begin{align*}
	I_1& \leq E_{P^0}[\sum_{k=0}^{m-1}(\langle A,(\int_{t^m_{k}}^{t^m_{k+1}}\gamma_rdW_r)(\int_{t^m_{k}}^{t^m_{k+1}}\gamma_rdW_r)^T\rangle-\langle A,\int_{t^m_{k}}^{t^m_{k+1}}\gamma_r\gamma_r^Tds\rangle)^2]^\frac12\\
	&\leq \{\sum_{k=0}^{m-1} (2E_{P^0}[\langle A,(\int_{t^m_{k}}^{t^m_{k+1}}\gamma_rdW_r)(\int_{t^m_{k}}^{t^m_{k+1}}\gamma_rdW_r)^T\rangle^2]+ 2E_{P^0}[\langle A,\int_{t^m_{k}}^{t^m_{k+1}}\gamma_r\gamma_r^Tdr\rangle^2])\}^\frac12\\
	&\leq \frac{L_3|A|}{\sqrt{m}}(t-s),
	\end{align*}
where we have used  B-D-G inequality in the last inequality and $L_3$ is a constant depending on  $\Gamma$.
	
By the above estimates and the definition of classical quadratic variation process, we have
$$E_P[(\langle A,\langle B\rangle ^P_t-\langle B\rangle^P_s\rangle-G_1(A)(t-s))^+]=\lim_{m\rightarrow \infty}E_P[(\sum_{k=0}^{m-1}\langle A,(B_{t^m_{k+1}}-B_{t^m_{k}})(B_{t^m_{k+1}}-B_{t^m_{k}})^T\rangle-G_1(A)(t-s))^+]=0,$$
from which, and $\langle B\rangle ^P=\langle M\rangle ^P$, we deduce that
	$\langle A,\langle M\rangle^P_t-\langle M\rangle^P_s\rangle \leq G_1(A)(t-s),  P\text{-a.s.}$
Consequently, we get that
	\begin{equation*}
	\langle A,\frac{d\langle M\rangle^P_t}{dt}\rangle\leq G_1(A),\ \ \ \ \ {a.e.\  t},\  P\text{-a.s.}
	\end{equation*}
The proof is complete.
\end{proof}

\end{document}